\newtheorem{definition}{Definition}[section]
\theoremstyle{plain}
\newtheorem{thm}{Theorem}
\newtheorem{lem}{Lemma}[section]
\newtheorem{cor}[lem]{corollary}
\newtheorem{fact}{Fact}[section]
\newtheorem{prop}{Proposition}
\newtheorem{remark}{Remark}
\newcommand{\m}{\textrm{mod}}
\newcommand{\diam}{\textrm{diam}}
\newcommand{\dist}{\textrm{dist}}
\tikzset{
	my rectangle/.pic={
		\draw (0,-0.07) rectangle (0.8,0.07);
	}
}
\tikzset{
	my draw/.pic={
		\draw (0,0) -- (4,0);
		\draw (0,-0.1) -- (0,0.1);
		\draw (4,-0.1) -- (4,0.1);
		\draw[dashed] (4,0) -- (5.2,0);
		\draw (1.4,-0.07) rectangle (2.6,0.07);
		\fill (2,0) circle (0.07);
	}
}
\tikzset{
	my square/.pic={
		\draw (-2,-2) rectangle (2,2);
		\draw[dotted] (-2,-2) -- (2,2);
		\draw (-1.5,-2.1) -- (-1.5,-1.9);
		\fill (-0.75,-2)node[shift={(-90:10pt)}] {$c$} circle (1.5pt);
		\draw (0,-2.1)node[shift={(-90:7pt)}] {$p$} -- (0,-1.9);
		\fill (0.75,-2)node[shift={(-90:10pt)}] {$d$} circle (1.5pt);
		\draw (1.5,-2.1) -- (1.5,-1.9);
	}
}
\tikzset{
	my squ/.pic={
		\draw (-1.6,-1.6) rectangle (1.6,1.6);
		\draw[dotted] (-1.6,-1.6) -- (1.6,1.6);
		\draw[dotted] (-1.4,-1.6) -- (-1.4,1.4) -- (1.4,1.4) -- (1.4,-1.6);
		\draw[dotted] (-1.4,-1.4) -- (1.4,-1.4);
		\draw[dotted] (-0.2,1.4) -- (-0.2,-1.6);
		\draw[dotted] (0.2,1.4) -- (0.2,-1.6);
		\draw[dotted] (-1.4,0.2) -- (1.4,0.2);
		\draw[dotted] (-1.4,-0.2) -- (1.4,-0.2);
		\fill (-0.8,-1.6)node[shift={(-90:9pt)}] {$c$} circle (1pt);
		\fill (0.8,-1.6)node[shift={(-90:8pt)}] {$d$} circle (1pt);
	}
}
\tikzset{
	my arc11/.pic={
		\draw plot [smooth, tension=1] coordinates {(-0.2,0) (0,1) (0.2,0)};
	}
}
\tikzset{
	my arc12/.pic={
		\draw plot [smooth, tension=1] coordinates {(-0.2,0) (0,-1) (0.2,0)};
	}
}
\tikzset{
	my arc21/.pic={
		\clip (-0.6,0.2) rectangle (0,1.4);
		\draw plot [smooth, tension=1] coordinates {(-0.6,0) (0,1.8) (0.6,0)};
	}
}
\tikzset{
	my arc22/.pic={
		\clip (0,0.2) rectangle (0.6,1.4);
		\draw plot [smooth, tension=1] coordinates {(-0.6,0) (0,1.8) (0.6,0)};
	}
}
\tikzset{
	my arc23/.pic={
		\clip (-0.6,-0.2) rectangle (0,-1.4);
		\draw plot [smooth, tension=1] coordinates {(-0.6,0) (0,-1.8) (0.6,0)};
	}
}
\tikzset{
	my arc24/.pic={
		\clip (0,-0.2) rectangle (0.6,-1.4);
		\draw plot [smooth, tension=1] coordinates {(-0.6,0) (0,-1.8) (0.6,0)};
	}
}
\begin{document}

\title[]{Decay of geometry for a class of cubic polynomials}

\author{Haoyang Ji and Wenxiu Ma}

\address{School of Mathematics and Statistics,
Zhengzhou University, Zhengzhou,
450001,  CHINA (e-mail:jihymath@zzu.edu.cn)}
\address{School of Mathematical Sciences,
University of Science and Technology of China, Hefei,
230026,  CHINA (e-mail:mwx@mail.ustc.edu.cn)}

\date{}
\subjclass[2010]{37E05, 37F25}
\keywords{Fibonacci, cubic polynomial, decay of geometry, Cantor attractor}

\begin{abstract}
In this paper we study a class of bimodal cubic polynomials for which its critical points have the same $\omega$-limit set which is an invariant Cantor set. These maps have generalized Fibonacci combinatorics in terms of generalized renormalization on the twin principal nest.  It is proved that such maps possess `decay of geometry' in the sense that the scaling factor of the twin principal nest decreases at least exponentially fast. As an application, we prove that they have no Cantor attractor.
\end{abstract}

\maketitle

\section{Introduction}
\label{intro}

The dynamical properties of unimodal interval maps have been extensively studied. By now, we have reached a full understanding of real analytic unimodal dynamics, especially for real quadratic polynomials, or more broadly, $S$-unimodal maps with critical point of order 2. A phenomenon named ``(exponentially) decay of geometry" plays essential role in the study of quadratic dynamics. Several results including density of hyperbolicity and Milnor's attractor problem rely on this phenomenon.

Informally speaking, decay of geometry in unimodal case means that one can find sufficiently small nice neighborhoods $V_n \subset V_n'$ of the critical point so that the ratio $|V_n|/|V_n'|$ decreases at least exponentially fast. (Actually, the relation between $V_n$ and $V_n'$ should be more specific.) It was proved in \cite{JS} and \cite{Lyu} that for S-unimodal maps with critical order $\ell \leq 2$, the decay of geometry property follows from a ``starting condition". The verification of the starting condition is more complicated. It was proved by Lyubich\cite{Lyu} for quadratic maps. Lately, Graczyk-Sands-\'Swi\c atiek \cite{GSS} gave an alternative proof for S-unimodal maps with non-degenerate critical point, using the method of asymptotically conformal extension which goes back to Sullivan. Note that these proofs of the starting condition make elaborate use of complex methods and do not seem to work for critical order smaller than 2. More recently, Shen \cite{S} proved the decay of geometry property for smooth unimodal maps with critical order between 1 and 2 by a purely real argument.

However, the decay of geometry loses its universality for unimodal maps with larger critical order, or even for multimodal maps with quadratic critical points. In unimodal case, a famous example is the Fibonacci unimodal maps. Maps of Fibonacci combinatorics have already attracted a lot of attention. It is well-known at present that Fibonacci unimodal maps have decay of geometry for critical order $\ell \leq 2$ and bounded geometry for critical order $\ell > 2$, see \cite{LM} and \cite{KN}. Note that bounded geometry arose from the study of infinitely renormalizable maps with bounded combinatorics. On the other hand,  in \cite{SV} \'Swi\c atiek and Vargas provided a concrete example of a non-renormalizable bimodal cubic polynomial with bounded geometry. The construction of their example was based on the inducing procedure goes back to Jakobson. By using principal nest, Shen in \cite{S2} showed that multimodal map with non-degenerate critical points has either `large bounds' or `essentially bounded geometry'. In \cite{V} Vargas introduced the Fibonacci bimodal map by use of the natural symmetry of bimodal maps. Instead of considering one sequence of nice neighborhoods of a chosen critical point, Vargas constructed two sequences of both critical points simultaneously. We call the two sequences of nice intervals the {\it twin principal nest}.

The aim of this paper is to study decay of geometry phenomenon in the bimodal cubic polynomial case. We describe a class of bimodal cubic polynomials including cubic Fibonacci map by using the language of generalized renormalization.  The main result is that every map from this class has decay of geometry in the sense that the scaling factor of its twin principal nest decreases at least exponentially fast. The strategy is to extend the generalized renormalization to a generalized box mapping and then study the asymptotic property of the moduli of the fundamental annuli. Since the {\it real bounds} theorem does not hold in our settings, we will use the construction of Yoccoz puzzle to get {\it complex a priori bounds} first. The Yoccoz puzzle construction relies on B$\ddot{\rm o}$ttcher Coordinate which only holds for polynomials. This is the reason we state our result for bimodal cubic polynomials. Moreover, in our setting the non-existence of Cantor attractor holds.

\subsection{Preliminaries} Denote $I=[0,1]$. A continuous map $f: I \to I$ is called  {\it bimodal} if $f(\{0,1 \}) = \{0,1 \}$ and $f$ has exactly one local maximum and one local minimum in $(0,1)$. The two extreme points specified by $c <d$ are called {\it turning points} and $f$ is strictly monotone on subintervals determined by these points. If the points $\{0,1 \}$ are fixed then we say that the bimodal map $f$ is {\it positive} and in the case that these points are permuted we say that $f$ is {\it negative}. Examples of bimodal maps are parameterized families of real cubic polynomials $P_{ab}^+$ and $P_{ab}^-$ given by $P_{ab}^+(x) = a x^3 + b x^2 + (1-a-b)x$ and $P_{ab}^-(x) = 1 -a x^3 -b x^2 - (1-a-b)x$. We are mainly interested in bimodal maps have neither periodic attractors nor wandering intervals.

For $T\subset I$, let $D(T)=\{x\in I: f^k(x)\in T \mbox{ for some }k\geq 1\}$.
The {\it first entry map} $R_T: D(T)\to T$ is defined as $x \to f^{k(x)}(x)$, where $k(x)$ is the {\it entry time} of $x$ into $T$, i.e., the minimal positive integer such that $f^{k(x)}(x)\in T$. The map $R_T|(D(T)\cap T)$ is called the {\it first return map} of $T$. A component of $D(T)$  is called an {\it entry domain} of $T$ and a component of $D(T) \cap T$ is called a {\it return domain}.

A point $x \in I$ is called {\it recurrent} provided $x \in \omega(x)$.

If $J$ and $J'$ are two intervals on the real line, by $J<J'(J \leq J')$ we mean that $y < y' (y \leq y')$ for every $y \in J$ and $y' \in J'$; analogously we define $a < J$ and $a \leq J'$ for real number $a$.

\subsection{Twin principal nest} Let $\mathscr B$ denote the collection of $C^{3}$ bimodal maps $f : I \to I$ which have no wandering intervals and all periodic cycles hyperbolic repelling. Let $\textrm{Crit}(f)$ denote the set of critical points of $f$, i.e., the set of points where $Df$ vanishes. Note that $\{c ,d \} \subset \textrm{Crit}(f)$. Let $\mathscr B^+$ and $\mathscr B^-$ denote, respectively, the subset of positive and negative bimodal maps from class $\mathscr B$. If $f \in \mathscr B^+$, then there exists a fixed point $p$ in $(c, d)$; for otherwise $\partial I$ contains attracting fixed point. Let $p_1 < p_2$ be such that $f(p_1) = f(p_2) =p$. Define $I^0 = (p_1,p)$, $J^0 = (p , p_2)$. If $f \in \mathscr B^-$, we discuss in three cases:
\begin{itemize}
\item[(1)] $f$ has three fixed point in $(0,1)$. In this case, there exists a fixed point $p$ in $(c,d)$, then define $I^0$ and $J^0$ as above.
\item[(2)] $f$ has one fixed point $p$ in $(0,1)$ with three preimages $\{p,p_{1},p_{2}\}$ specified by $p_{1}< p_{2}$. If $p< p_{1}<p_{2}$, define $I^{0} = (p,p_1) \ni c$ and $J^{0} = (p_1 , p_2) \ni d$; if $p_1<p_2<p$, define $I^{0}=(p_{1},p_{2})$ and $J^{0} = (p_{2},p)$.
\item[(3)] $f$ has one fixed point $p$ in $(0 ,1)$ with only one preimage, that is, $f^{-1}(p) = \{p\}$. This case can be reduced to the positive case since $f^2$ restricted on $[p,1]$ is always a positive bimodal map.
\end{itemize}

 Assume that $c$ and $d$ are recurrent. Define
\[ I^0 \supset I^1 \supset I^2 \supset \ldots \supset \{c\} \ \mbox{and} \ J^0 \supset J^1 \supset J^2 \supset \ldots \supset \{d\}
\]
such that, for $n \geq 1$, the intervals $I^n$ and $J^n$ are first return domains to $I^{n-1} \cup J^{n-1}$. The two sequences of nested intervals will be called the {\it twin principal nest} of $f$. The {\it scaling factor} of $f$ is defined as $$\lambda_n : = \max\{\frac{|I^{n}|}{|I^{n-1}|} , \frac{|J^{n}|}{|J^{n-1}|}  \}.$$

Let $g_n$ denote the first return map to $I^{n-1} \cup J^{n-1}$. Let $J$ be any connected component of the domain of $g_n$, the restriction of $g_n$ on $J$ is called a {\it branch}. Then $g_n|J = g_{n-1}^{k}$, where $k = \min\{ k \geq 1 : g_{n-1}^k(x) \in I^{n-1} \cup J^{n-1}\}$. A branch $J$ of $g_n$ is called {\it post-critical} if $J$ contains $g_n(c)$ or $g_n(d)$; a branch $J$ of $g_n$ is called {\it immediate} if $g_n|J = g_{n-1}$. The first return map $g_n$ is called {\it central return} if $g_n(c) \in I^n \cup J^n $ or $g_n(d) \in I^n \cup J^n $; otherwise $g_n$ is called {\it non-central return}. In case that $g_n$ is non-central, $g_n$ may have 1 or 2 post-critical branches. When $g_n$ has 2 post-critical branches denoted as $C^n$ and $D^n$, we may assume that $C^n < D^n$, which means that $C^n$ is on the left side of $D^n$.

When a post-critical branch $J$ is also immediate, sometimes we will call $J$ a {\it Fibonacci branch}. So if $g_n$ has a Fibonacci branch, then the corresponding critical point leaves the central domains $I^n \cup J^n$ but then returns back immediately under $g_n$, this is the fastest combinatorial recurrence in the non-central case.

We shall now describe our example in terms of the first return map to twin principal nest.

\begin{definition}
Let $\mathscr G$ denote the class of bimodal maps $f \in \mathscr B$ with two recurrent critical points $c, d$ and such that the following properties hold:
\begin{itemize}
\item[(1)] $\omega(c) = \omega(d)$;
\item[(2)] $f(c),f(d) \notin I^0 \cup J^0$ while $f^i(c), f^i(d) \in I^0 \cup J^0$ for $i=2,3$;
\item[(3)] $C^n$ and $D^n$ are well-defined for all $n \geq 1$;
\item[(4)] for each $n \geq 1$, $I^n \cup J^n \subset g_n(I^n \cup J^n)$;
\item[(5)] for each $n \geq 1$, $(\omega(c) \cup \omega(d)) \cap (I^{n-1} \cup J^{n-1}) \subset I^n \cup J^n \cup C^n \cup D^n$;
\item[(6)] for each $n \geq 1$, $g_n|(I^n \cup J^n) = g_{n-1}^{r_n}|(I^n \cup J^n)$ for some integer $r_n \geq 2$;
\item[(7)] for each $n \geq 1$, $g_n|(C^n \cup D^n) = g_{n-1}^{t_n}|(C^n \cup D^n)$ for some integer $t_n \geq 1$.
\end{itemize}
\end{definition}

If $\omega(c) \neq \omega(d)$, the orbits of two critical points are separated, hence can be reduced to unimodal case. Property (2) should be considered as a starting condition. Property (3) and (4) allow that on each level $n$, $g_n$ is always non-central and high return type. While property (5) implies that on each level the number of first return domains intersecting the orbits of $c$ and $d$ are exactly 4, including 2 central domains and 2 post-critical domains. Property (6) and (7) refer to the renormalization type of a generalized renormalization.

\subsection{Box mappings} In the unimodal case, a generalized renormalization is defined as the first return map to $n$-th level principal nest restricted on return domains intersecting the orbits of the critical point. In our settings, the $n$-th generalized renormalization for $f \in \mathscr G$ is just the restriction of $g_n$ on the union of 4 return domains: $I^n \cup J^n \cup C^n \cup D^n$. In the following we shall introduce the notion  {\it real (and complex) box mapping}, which can be considered as a special type of generalized renormalization via twin principal nest. The definition below applies to both real and complex mappings, once we recognize that an interval is a topological disk on the real line and replace `monotone branch' with `univalent branch'.

\begin{definition}
Let $\mathcal F$ be the collection of smooth maps
\[
\phi : U_0 \cup U_1 \cup V_0 \cup V_1 \to U \cup V
\]
with the following properties:
\begin{itemize}
\item $U$ and $V$ are open intervals with pairwise disjoint closures specified by $U<V$;
\item the domains of $f$ are open intervals with pairwise disjoint closures such that $U_0 \cup U_1 \subset U$ and $V_0 \cup V_1 \subset V$;
\item $\phi|U_1$ and $\phi|V_1$ are monotone onto $U$ or $V$; 
\item $\phi|U_0$ has a unique critical point $c$ while $\phi|V_0$ has a unique critical point $d$;
\item $\phi$ is a proper map and extends continuously to the closure of its domain.
\end{itemize}
Moreover, we may make the following two additional assumptions:
\begin{itemize}
\item the critical points $c$ and $d$ do not escape under the forward iterates of $\phi$; 
\item the critical points $c$ and $d$ are non-periodic and recurrent, and they have the same $\omega$-limit set.
\end{itemize}
\end{definition}

For a box mapping $\phi$ from class $\mathcal F$, the concepts of nice open sets, first return maps, etc, can be formulated similarly as in the case as interval endomorphism. The restriction of a box mapping $\phi$ to connected components of its domain will be referred to as {\it branches} as well. In particular, $\phi|U_0$ and $\phi|V_0$ will be called central branch, while the other two will be called monotone branch.

We will see below that the combinatorial property of our class will make some constraints about the position of the return domains  and their images. In other words, we mainly consider the following 4 types of box mapping $\phi$:
\begin{itemize}
\item Type $\mathcal A$: if $\phi(U_1) = V$, $\phi(U_0) \subset V$ and $\phi(V_1) = U$, $\phi(V_0) \subset U$;
\item Type $\mathcal B$: if $\phi(U_1) = V$, $\phi(U_0) \subset U$ and $\phi(V_1) = U$, $\phi(V_0) \subset V$;
\item Type $\mathcal C$: if $\phi(U_1) = U$, $\phi(U_0) \subset V$ and $\phi(V_1) = V$, $\phi(V_0) \subset U$;
\item Type $\mathcal D$: if $\phi(U_1) = U$, $\phi(U_0) \subset U$ and $\phi(V_1) = V$, $\phi(V_0) \subset V$.
\end{itemize}
Observe that the precise position and orientation of the central and non-central branches are not specified yet. Figure 1 illustrate a possible position but still without the orientation.

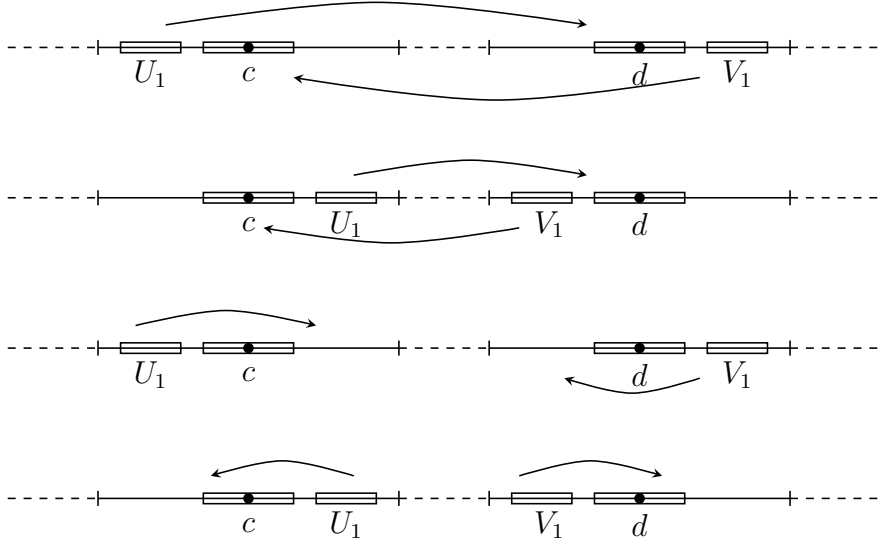
\begin{figure}
		\centering
		\begin{tikzpicture}[scale=1, line width=0.6pt, >=stealth]
			\draw[dashed] (0,-2) -- (1.2,-2);
			\pic at(1.2,-2) {my draw};
			\pic at(6.4,-2) {my draw};
			\node[shift={(-90:10pt)}] at(3.2,-2) {$c$};
			\node[shift={(-90:10pt)}] at(8.4,-2) {$d$};
			\pic at(6.7,-2) {my rectangle};
			\node[shift={(-90:10pt)}] at(7.2,-2) {$V_1$};
			\pic at(4.1,-2) {my rectangle};
			\node[shift={(-90:10pt)}] at(4.5,-2) {$U_1$};
			\draw[->] plot [smooth, tension=0.6] coordinates {(4.6,-1.7) (3.65,-1.5) (2.7,-1.7)};
			\draw[->] plot [smooth, tension=0.6] coordinates {(6.8,-1.7) (7.75,-1.5) (8.7,-1.7)};

			\draw[dashed] (0,0) -- (1.2,0);
			\pic at(1.2,0) {my draw};
			\pic at(6.4,0) {my draw};
			\node[shift={(-90:10pt)}] at(3.2,0) {$c$};
			\node[shift={(-90:10pt)}] at(8.4,0) {$d$};
			\pic at(1.5,0) {my rectangle};
			\node[shift={(-90:10pt)}] at(1.9,0) {$U_1$};
			\pic at(9.3,0) {my rectangle};
			\node[shift={(-90:10pt)}] at(9.7,0) {$V_1$};
			\draw[->] plot [smooth, tension=0.6] coordinates {(1.7,0.3) (2.9,0.5) (4.1,0.3)};
			\draw[->] plot [smooth, tension=0.6] coordinates {(9.2,-0.4) (8.3,-0.6) (7.4,-0.4)};
			
			\draw[dashed] (0,2) -- (1.2,2);
			\pic at(1.2,2) {my draw};
			\pic at(6.4,2) {my draw};
			\node[shift={(-90:10pt)}] at(3.2,2) {$c$};
			\node[shift={(-90:10pt)}] at(8.4,2) {$d$};
			\pic at(6.7,2) {my rectangle};
			\node[shift={(-90:10pt)}] at(7.2,2) {$V_1$};
			\pic at(4.1,2) {my rectangle};
			\node[shift={(-90:10pt)}] at(4.5,2) {$U_1$};
			\draw[->] plot [smooth, tension=0.6] coordinates {(4.6,2.3) (6.15,2.5) (7.7,2.3)};
			\draw[->] plot [smooth, tension=0.6] coordinates {(6.8,1.6) (5.1,1.4) (3.4,1.6)};
			
			\draw[dashed] (0,4) -- (1.2,4);
			\pic at(1.2,4) {my draw};
			\pic at(6.4,4) {my draw};
			\node[shift={(-90:10pt)}] at(3.2,4) {$c$};
			\node[shift={(-90:10pt)}] at(8.4,4) {$d$};
			\pic at(1.5,4) {my rectangle};
			\node[shift={(-90:10pt)}] at(1.9,4) {$U_1$};
			\pic at(9.3,4) {my rectangle};
			\node[shift={(-90:10pt)}] at(9.7,4) {$V_1$};
			\draw[->] plot [smooth, tension=0.6] coordinates {(2.1,4.3) (4.9,4.6) (7.7,4.3)};
			\draw[->] plot [smooth, tension=0.6] coordinates {(9.2,3.6) (6.5,3.3) (3.8,3.6)};
			
		\end{tikzpicture}
		\caption{Examples of types $\mathcal{A}$ $\mathcal{B}$ $\mathcal{C}$ $\mathcal{D}$}
	\end{figure}

A box mapping $\phi$ is called {\it symmetric} if it satisfies further:
\begin{itemize}
\item $\phi$ is of type $\mathcal A$, $\mathcal B$, $\mathcal C$ or $\mathcal D$;
\item $\phi$ has different type of local extreme at $c$ and $d$;
\item $\phi|U_1$ and $\phi|V_1$ have the same orientation.
\end{itemize}
Let $\mathcal F_s$ denote the collection of symmetric box mappings. Let $\mathcal F_s^+$ and $\mathcal F_s^-$, respectively, denote the collection of maps from $\mathcal F_s$ such that the monotone branches of $\phi$ are orientation-preserving and orientation-reversing.

We will divide the type of symmetric box mappings in more details. Let us devide each type $\mathcal A$, $\mathcal B$, $\mathcal C$, $\mathcal D$ into subtypes $\mathcal A^{ij}$, $\mathcal B^{ij}$, $\mathcal C^{ij}$, $\mathcal D^{ij}$ with $i,j \in \{ +,- \}$. Where $i=+$ or $i=-$ if the monotone branches of $\phi$ are orientation-preserving or orientation-reversing; and $j=+$ or $j=-$ if $\phi$ is local maximal or minimal at $c$. Let $\mathcal T = \{\mathcal A^{ij}, \mathcal B^{ij}, \mathcal C^{ij}, \mathcal D^{ij}  \}$. Let $\mathcal A^+ : = \mathcal A^{++} \cup \mathcal A^{+-}$ and define $\mathcal A^-, \mathcal B^+, \mathcal B^-, \mathcal C^+, \mathcal C^-, \mathcal D^+, \mathcal D^-$ analogously.

\subsection{Statement of Results} We will see in section 2 that the $n$-th generalized renormalization $g_n$ is always a symmetric box mapping. Therefore, for each $n \geq 1$, $g_n$ is associated with 3 terms: its type, denoted as $\theta_n \in \mathcal T$, and return times $r_n$ and $t_n$ under the iterate of $g_{n-1}$. Then the original map $f \in \mathscr G$ is associated with a sequence of triples $\mathcal S(f) = \{(\theta_n, r_n, t_n)\}_{n \geq 1}$ which will be called the {\it combinatorial sequence} of $f$. We will show that $\mathcal S(f)$ satisfies some additional condition, named the {\it Admissible condition}. We remark here that the combinatorial sequence $\mathcal S(f)$ always starts with $\mathcal A^{++}$ or $\mathcal C^{-+}$ depending on $f$ is positive or negative. On the other hand, let $\mathcal S$ be any sequence of triples $\{(\theta_n, r_n, t_n)\}_{n \geq 1}$ satisfying $\theta_n \in \mathcal T$, $r_n \geq 2, t_n \geq 1$ for all $n \geq 1$. It is proved that if $\mathcal S$ satisfies the Admissible condition, then $\mathcal S$ is {\it admissible}, which means there exists a bimodal map $f \in \mathscr G$ such that $\mathcal S(f) = \mathcal S$. Hence the Admissible condition is necessary and also sufficient.

According to \cite{MS}, the families of real cubic polynomials $P_{ab}^+$ and $P_{ab}^-$ are full families. Combining with the Rigidity Theorem developed in \cite{KSS}, we can obtain the following corollary.

\begin{cor}
Let $\mathcal S$ be any sequence satisfying the Admissible condition. 
\begin{itemize}
\item If $\theta_1 = \mathcal A^{++}$, there exists a unique map in $P_{ab}^+$ with combinatorial sequence $\mathcal S$;
\item If $\theta_1 = \mathcal C^{-+}$, there exists a unique map in $P_{ab}^-$ with combinatorial sequence $\mathcal S$.
\end{itemize} 
\end{cor}

\begin{definition}
Let $\mathscr C$ denote $\mathscr G \cap (P_{ab}^+ \cup P_{ab}^-)$.
\end{definition}

The main result of this paper is the following:

\begin{thm}
Suppose $f \in \mathscr C$, then there exist constants $C = C(f) > 0$ and $0<\lambda=\lambda(f) <1$ such that the scaling factor of $f$ decreases at least exponentially: $\lambda_n(f) \leq C \lambda^n$ for all $n \geq 1$.
\end{thm}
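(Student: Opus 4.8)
The plan is to complexify the generalized renormalization of $f$ by means of a Yoccoz puzzle, to extract \emph{complex a priori bounds}, and then to read off exponential decay of the real geometry from linear growth of the moduli of the fundamental annuli; this last step works precisely because the two turning points of a cubic polynomial are critical points of order exactly $2$.

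\emph{Construction of the complex nest.} Since $f\in\mathscr C$ is a cubic polynomial, it carries a B\"ottcher coordinate near $\infty$, hence equipotentials and external rays; taking all rays landing at the repelling fixed point $p$ (and, in the cases of $\mathscr B^{-}$ where it is needed, at $p_{1},p_{2}$) together with all their $f$-preimages cuts the plane into nested \emph{puzzle pieces}. Adapting the quadratic Yoccoz construction to the bimodal cubic setting, I would produce topological disks $\mathbf I^{n}\ni c$ and $\mathbf J^{n}\ni d$, symmetric about $\mathbb R$, whose real traces are exactly the twin principal nest intervals $I^{n},J^{n}$, together with pieces $\mathbf C^{n},\mathbf D^{n}$ over $C^{n},D^{n}$. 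The first return map $g_{n}$ extends holomorphically, and restricting it yields at each level $n$ a complex box mapping $\mathbf g_{n}$ with two critical points, each of local degree $2$, whose trace on $\mathbb R$ is the generalized renormalization of $f$. Conditions (1)--(6) and the admissibility of $\mathcal S(f)$ become combinatorial bookkeeping for the $\mathbf g_{n}$: every return is non-central, $\mathbf g_{n}|(\mathbf I^{n}\cup\mathbf J^{n})=\mathbf{g}_{n-1}^{\,r_{n}}$ with $r_{n}\ge 2$, and the four shapes $\mathcal A,\mathcal B,\mathcal C,\mathcal D$ prescribe the disposition of $\mathbf g_{n}$ on $\mathbf I^{n}\cup\mathbf C^{n}\cup\mathbf J^{n}\cup\mathbf D^{n}$.

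\emph{Complex a priori bounds.} Set $\mu_{n}:=\min\{\mathrm{mod}(\mathbf I^{n-1}\setminus\overline{\mathbf I^{\,n}}),\ \mathrm{mod}(\mathbf J^{n-1}\setminus\overline{\mathbf J^{\,n}})\}$. The first substantial task is $\inf_{n}\mu_{n}>0$. Because the real bounds theorem fails for maps in $\mathscr C$, these bounds cannot be obtained from real distortion estimates via the Koebe principle, and the argument must take place entirely in the plane. I would follow the analysis of complex box mappings in the style of Kahn and Lyubich, supplemented by the enhanced-nest machinery of \cite{KSS}, to cope with the persistently non-central combinatorics and with the coupling of the two critical orbits ($\omega(c)=\omega(d)$); for each of the shape types $\mathcal A,\mathcal B,\mathcal C,\mathcal D$ one exhibits a definite annulus separating the central pieces from $\mathbf C^{n},\mathbf D^{n}$ and pulls it back along $\mathbf g_{n}$. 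This is the crux of the argument and where I expect the real difficulty: it requires both the correct Yoccoz puzzle for a bimodal cubic whose two critical points lie in one invariant Cantor set, and a quantitative covering/modulus estimate that does not appeal to any real a priori bound.

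\emph{Linear growth of moduli, and conclusion.} Granted $\mu_{n}\ge\varepsilon_{0}>0$ for all $n$, I would analyse the asymptotics of $\mu_{n}$. On $\mathbf I^{n}$ the map $\mathbf g_{n}$ is a univalent map precomposed with the order-$2$ fold at $c$; tracing how the fundamental annulus of level $n$ is pulled back --- through the $r_{n}\ge 2$ laps of $\mathbf g_{n-1}$ contained in $\mathbf g_{n}$, visiting the earlier levels, and through the single fold at $c$ --- one finds that non-centrality forces a fresh collar of definite modulus to be adjoined at each step, so that $\mu_{n}$ grows at least linearly: $\mu_{n}\ge c_{1}n$ for all large $n$. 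This is the bimodal, generalized-Fibonacci analogue of the linear growth of the principal moduli proved by Lyubich and Milnor for the Fibonacci unimodal map, and it is exactly here that order $2$ is used; a higher-order fold would only keep $\mu_{n}$ bounded, giving bounded geometry instead. Finally, the complex bounds force each $\mathbf I^{n-1}$ to have bounded shape with $\mathrm{diam}(\mathbf I^{n-1})\le C_{2}\,|I^{n-1}|$, while the Gr\"otzsch--Teichm\"uller estimate gives $\mathrm{diam}(\mathbf I^{n})\le C_{3}\,e^{-2\pi\mu_{n}}\mathrm{diam}(\mathbf I^{n-1})$, and likewise for $\mathbf J$; since $|I^{n}|\le\mathrm{diam}(\mathbf I^{n})$ and $|J^{n}|\le\mathrm{diam}(\mathbf J^{n})$, we conclude $\lambda_{n}(f)\le C\,e^{-2\pi c_{1}n}$ for all $n\ge 1$, which is the assertion with $\lambda=e^{-2\pi c_{1}}\in(0,1)$.
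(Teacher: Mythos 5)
Your overall architecture --- Yoccoz-puzzle complexification of the twin principal nest, complex a priori bounds, linear growth of the principal moduli $\mu_n$, and then a Gr\"otzsch--Teichm\"uller estimate converting linear modulus growth into exponential decay of $\lambda_n$ --- is exactly the architecture of the paper, and the final conversion step is correct. But the two load-bearing steps are only announced, not proved, and the heuristic you offer for the second one fails in precisely the decisive case.

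For the a priori bounds you defer to ``Kahn--Lyubich plus the enhanced nest of \cite{KSS}''; the paper instead runs the Graczyk--\'Swi\c atek separation-symbol machinery (a normalized symbol $(s_1,s_2,s_3,s_4)$ with norm $\beta$ and corrections $\lambda_1,\lambda_2$, propagated through each inducing step by explicit pullbacks of five separating annuli), showing the norm is non-decreasing. Either route might work, but neither is carried out in your text. More seriously, your claim that non-centrality ``forces a fresh collar of definite modulus to be adjoined at each step, so that $\mu_n$ grows at least linearly'' is not correct: for consecutive returns of Fibonacci combinatorial type $(2,1)$ the pullback of the separating annuli yields no gain at all --- the norm $\beta$ is merely preserved while the corrections are permuted (this is the content of the paper's Lemma 4.4). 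The whole difficulty of the linear-growth step is concentrated exactly there, and the paper resolves it with the conformal-roughness dichotomy (Lemmas 4.11--4.13): in a block of consecutive Fibonacci returns a type-$\mathcal B$ return must occur, and for it either the annulus $U_c^0\setminus P_c^1$ already carries extra modulus beyond $s_3^{(1)}$, or the curve $\partial U_c^1$ is $(M,\epsilon)$-rough, which makes the superadditivity of moduli under $\oplus$ strict and again produces a definite gain within a bounded number of further inducing steps. Without this dichotomy (or some substitute), your argument establishes at best that $\mu_n$ stays bounded below, which gives bounded geometry rather than decay of geometry.
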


This property is known as `decay of geometry' for unimodal maps. It is well-known that the non-existence of `wild attractor' is based only on the decay of geometry in the unimodal case. In our setting we prove the following proposition.

\begin{prop}
Suppose $f \in \mathscr C$, then $f$ admits no Cantor attractor.
\end{prop}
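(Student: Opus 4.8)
The strategy is the principal-nest argument for the absence of wild attractors, carried out on the twin principal nest, with the missing \emph{real bounds} replaced by the complex a priori bounds that underlie the proof of the Theorem. Put $A:=\omega(c)=\omega(d)$ (well defined since $f\in\mathscr B_{*}$). A Cantor attractor of $f$ is a nowhere dense metric attractor, and since $f$ has no wandering intervals and only hyperbolic repelling cycles such a set must coincide with the $\omega$-limit of a critical point, hence with $A$, and must satisfy $\mathrm{Leb}(B(A))>0$, where $B(A):=\{x\in I:\omega(x)=A\}$ (see \cite{MS}). So it suffices to prove $\mathrm{Leb}(B(A))=0$. Because $c\in A$, every point of $B(A)$ eventually enters the neighbourhood $I^{0}$ of $c$, and preimages of null sets under $f$ are null (the critical points being non-flat); hence it is enough to show $\mathrm{Leb}(B(A)\cap(I^{0}\cup J^{0}))=0$.

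Fix $n\ge1$ and write $W_{n}:=I^{n-1}\cup J^{n-1}$, $\Theta_{n}:=I^{n}\cup J^{n}\cup C^{n}\cup D^{n}$, and let $g_{n}$ be the first return map to $W_{n}$. If $x\in B(A)\cap W_{n}$ then $c\in A=\omega(x)$, so the $f$-orbit of $x$ accumulates at $c\in W_{n}$ and therefore returns to $W_{n}$ infinitely often, whence the $g_{n}$-orbit of $x$ is defined for all times; moreover $\dist(f^{j}(x),A)\to0$ together with condition~(4) in the definition of $\mathscr G$, which gives $A\cap W_{n}\subset\Theta_{n}$, forces all but finitely many iterates $g_{n}^{k}(x)$ to lie in $\Theta_{n}$. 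Writing $B_{n}^{*}$ for the set of points of $\Theta_{n}$ whose entire forward $g_{n}$-orbit stays in $\Theta_{n}$, we obtain $B(A)\subset\bigcup_{j\ge0}f^{-j}(B_{n}^{*})$, so it suffices to prove $\mathrm{Leb}(B_{n}^{*})=0$.

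The heart of the matter is a measure-contraction estimate for $g_{n}$ on $\Theta_{n}$. On $\Theta_{n}$ the map $g_{n}$ consists of the two central branches on $I^{n}$ and $J^{n}$, which fold at the cubic critical points $c,d$ and, by condition~(3), cover $I^{n}\cup J^{n}$, together with the two monotone branches on $C^{n}$ and $D^{n}$, each mapping diffeomorphically onto $I^{n-1}$ or $J^{n-1}$; the passage from $g_{n}$ to $g_{n+1}$ only introduces, via conditions~(5) and~(6), further compositions of branches of these two kinds. A $g_{n}$-orbit can remain in $\Theta_{n}$ only by landing, at each monotone step, in the part $\Theta_{n}\cap I^{n-1}=I^{n}\cup C^{n}$ of $I^{n-1}$ (resp. $\Theta_{n}\cap J^{n-1}=J^{n}\cup D^{n}$ of $J^{n-1}$). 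The decay of geometry furnished by the Theorem, together with the combinatorial analysis of the finitely many types $\mathcal A,\mathcal B,\mathcal C,\mathcal D$ of Section~2 --- which also makes $|C^{n}|/|I^{n-1}|$ and $|D^{n}|/|J^{n-1}|$ tend to $0$ --- shows that these target portions have vanishing relative length. Given uniformly bounded distortion of the monotone branches (next paragraph), each monotone step then destroys a definite fraction of the surviving measure, and along a block of consecutive central steps (a central cascade) the relevant central intervals $I^{n}\supset I^{n+1}\supset\cdots$ shrink exponentially, so the surviving measure also tends to $0$ along such a block. Summing these contractions along the orbit gives $\mathrm{Leb}(B_{n}^{*})=0$.

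What replaces the unavailable \emph{real bounds} is uniform distortion control of the monotone branches. For this I would use the complex box mapping constructed in the proof of the Theorem: each monotone branch of $g_{n}$ --- including those on $C^{n}$, $D^{n}$ and those produced in passing to $g_{n+1}$ --- extends to a univalent map onto a complex neighbourhood of $I^{n-1}$ or $J^{n-1}$ of definite modulus, uniformly in $n$, by the complex a priori bounds on the fundamental annuli, so that Koebe's distortion theorem supplies a distortion constant independent of $n$, exactly as needed above. The step I expect to be the main obstacle is the interaction of the cubic folds on the central branches, which inflate preimages of short intervals by a cube root, with the central-cascade estimate; this is handled precisely because the decay of geometry is \emph{exponential} and therefore dominates the cube root, so that each central cascade still contributes a uniformly contracting factor. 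Assembling these ingredients yields $\mathrm{Leb}(B(A))=0$, and hence $f$ has no Cantor attractor.
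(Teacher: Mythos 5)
Your reduction to showing that the basin of $A=\omega(c)=\omega(d)$ has Lebesgue measure zero, and further to a trapped set of a first return map, is sound and is implicitly the same reduction the paper makes. The gap is in the core claim $\mathrm{Leb}(B_n^*)=0$: the step ``each monotone step destroys a definite fraction of the surviving measure \dots\ summing these contractions along the orbit'' does not work as stated for a \emph{fixed} level $n$. The surviving sets $E_k=\{x:g_n^j(x)\in\Theta_n,\ 0\le j\le k\}$ are not unions of intervals mapped by $g_n^k$ with uniformly bounded distortion onto full components of $\Theta_n$: two of the four branches are degree-two folds (note the critical points of a bimodal cubic with two real critical points are quadratic, not cubic, as in Definition 3.1 --- this only helps you, but it signals that the fold analysis was not checked), the critical points themselves belong to $B_n^*$ and are recurrent, so high iterates of $g_n$ restricted to $E_k$ pass arbitrarily close to $c$ and $d$ arbitrarily often and their distortion is unbounded in $k$; the density bookkeeping therefore does not close up using only level-$n$ data. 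Indeed it \emph{cannot}: the examples of \cite{BKNS} show that bounded geometry at each fixed level is compatible with a wild attractor, so the exponential decay must enter through arbitrarily deep levels of the nest, and your sketch never specifies the mechanism by which it does (your appeal to ``central cascades'' is moreover vacuous here, since all returns in $\mathscr G$ are non-central). What is missing is exactly the device the paper builds: an induced Markov map $G$ adapted to the whole twin principal nest, whose branches map onto $I^m$, $J^m$, $I^{m-1}$ or $J^{m-1}$ for varying $m$ with Koebe-controlled distortion, so that the level $\alpha_k$ of $G^k(x)$ becomes a random walk. Theorem 1 then yields $P(\alpha_{k+1}=n-1\mid\alpha_k=n)\ge 1-\mathcal O(1/\tilde\rho)$ and $P(\alpha_{k+1}=n+r\mid\alpha_k=n)=\mathcal O(\rho^{-r})$, hence negative drift and bounded variance, and the random-walk lemma of \cite{BKNS} gives $\liminf_k\alpha_k<\infty$ almost everywhere, which excludes the Cantor attractor.

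A smaller point: you route all distortion control through the complex a priori bounds and the complex Koebe theorem. The paper instead uses the real Koebe principle for negative Schwarzian (Proposition 5), available because $f$ is a real polynomial with only real critical points; this is simpler and suffices once the induced map is in place. Your complex route is legitimate where the branches genuinely extend univalently with definite modulus, but it does not repair the main gap above.
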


This paper is organized as follows. In section 2, we study the combinatorial properties of maps in class $\mathscr G$. In section 3, by using Yoccoz puzzle method we show that the first return map $g_n$ can be treated as a complex box mapping. In subsection 4.1  we summarize basic background about conformal roughness from \cite{GS}. In subsection 4.2 we introduce the separation index for complex box mappings in our setting. In subsection 4.3 we prove the {\it complex a priori bounds} by showing that the separation index is non-decreasing. The proof of Theorem 1 is given in subsection 4.4, the idea is to show that the separation index grows at a linear rate. In subsection 4.5 we prove Proposition 1 by a standard random walk argument.

\section{Combinatorics}

\subsection{Combinatorial type} From the description of class $\mathscr G$, any map $f \in \mathscr G$ can be considered as a special real box mapping which is infinitely renormalizable in the context of generalized renormalization. Furthermore, Property (6) and (7) in Definition 1.1 will refer to the combinatorial type of a generalized renormalization.

Suppose $\phi \in \mathcal F_s$. If there exists a positive integer $r>1$ such that $\phi(c), \ldots , \phi^{r-1}(c) \in  U_1 \cup V_1$ while $ \phi^r(c) \in U_0 \cup V_0$, then we say that $\phi$ makes a return with depth $r$ at $c$. In this case, the first return map to $U_0 \cup V_0$ is well-defined near $c$ and equals $\phi^r$.

Suppose $\phi$ makes a return with depth $r$ at both $c$ and $d$. Let $U_0^1$ and $V_0^1$ be the connected components of $D(U_0 \cup V_0)$ which contains $c$ and $d$ respectively. In case that $\phi^r(c),\phi^r(d) \notin U_0^1 \cup V_0^1$, we say that $\phi$ makes non-central return; otherwise, we say that $\phi$ makes central return. (This is the same as in subsection 1.2.) When $\phi$ makes non-central return, let $U_1^1 \subset U_1$ and $V_1^1 \subset V_1$ denote the first return domains contains $\phi^r(c)$ and $\phi^r(d)$ respectively. Since $\phi^r(c)$ and $\phi^r(d)$ are in different components of $U_0 \cup V_0$, $U_1^1$ and $V_1^1$ do not coincide.

\begin{definition}
Suppose $\phi \in \mathcal F_s$, we say that $\phi$ has combinatorial type $(r, t)$, where $r \geq 2$ and $t \geq 1$, if
\begin{itemize}
\item[(1)] $\phi(U_0 \cup V_0) \supset U_0 \cup V_0$;
\item[(2)] f makes a non-central return with depth $r$ at $c$ and $d$;
\item[(3)] the first return time of $U_1^1$ and $V_1^1$ to $U_0 \cup V_0$ equals $t$.
\end{itemize}
A new map {\bf induced} from $\phi$, denoted $\mathcal I \phi$, is the first return map $R_{U_0 \cup V_0}$ restricted on $U_0^1 \cup U_1^1 \cup V_0^1 \cup V_1^1$.
\end{definition}

Property (1) here corresponds to property (4) in Definition 1.1. Property (2) and (3) correspond to property (6) and (7) in Definition 1.1. It is clear that $\mathcal I \phi$ equals $\phi^r$ on $U_0^1 \cup V_0^1$ and $\phi^t$ on $U_1^1 \cup V_1^1$. Moreover, if $\phi$ satisfies property (1), then the precise positions of all its branches are clear. Figure 2 below illustrates the graphs of $\phi$ in all possible cases.

\begin{figure}[hb!]
		\centering
		\begin{tikzpicture}[scale=1, line width=0.6pt, >=stealth]
			\pic at(-6,6) {my squ};
			\pic at($(-6,6)+(-0.8,0)$) {my arc21};
			\pic at($(-6,6)+(-0.8,0.2)$) {my arc11};
			\pic at($(-6,6)+(0.8,-0.2)$) {my arc12};
			\pic at($(-6,6)+(0.8,0)$) {my arc24};
			\draw[dotted] ($(-6,6)+(-0.8,-1.6)$) -- ($(-6,6)+(-0.8,1.2)$);
			\draw[dotted] ($(-6,6)+(0.8,-1.6)$) -- ($(-6,6)+(0.8,-1.2)$);
			
			\pic at(-2,6) {my squ};
			\pic at($(-2,6)+(-0.8,1.4)$) {my arc12};
			\pic at($(-2,6)+(-0.8,1.6)$) {my arc24};
			\pic at($(-2,6)+(0.8,-1.6)$) {my arc21};
			\pic at($(-2,6)+(0.8,-1.4)$) {my arc11};
			\draw[dotted] ($(-2,6)+(-0.8,-1.6)$) -- ($(-2,6)+(-0.8,0.4)$);
			\draw[dotted] ($(-2,6)+(0.8,-1.6)$) -- ($(-2,6)+(0.8,-0.4)$);
			
			\pic at(2,6) {my squ};
			\pic at($(2,6)+(-0.8,1.6)$) {my arc23};
			\pic at($(2,6)+(-0.8,0.2)$) {my arc11};
			\pic at($(2,6)+(0.8,-0.2)$) {my arc12};
			\pic at($(2,6)+(0.8,-1.6)$) {my arc22};
			\draw[dotted] ($(2,6)+(-0.8,-1.6)$) -- ($(2,6)+(-0.8,1.2)$);
			\draw[dotted] ($(2,6)+(0.8,-1.6)$) -- ($(2,6)+(0.8,-1.2)$);
			
			\pic at(6,6) {my squ};
			\pic at($(6,6)+(-0.8,1.4)$) {my arc12};
			\pic at($(6,6)+(-0.8,0)$) {my arc22};
			\pic at($(6,6)+(0.8,0)$) {my arc23};
			\pic at($(6,6)+(0.8,-1.4)$) {my arc11};
			\draw[dotted] ($(6,6)+(-0.8,-1.6)$) -- ($(6,6)+(-0.8,0.4)$);
			\draw[dotted] ($(6,6)+(0.8,-1.6)$) -- ($(6,6)+(0.8,-0.4)$);
			
			\pic at(-6,2) {my squ};
			\pic at($(-6,2)+(-0.8,-1.4)$) {my arc11};
			\pic at($(-6,2)+(-0.8,1.6)$) {my arc24};
			\pic at($(-6,2)+(0.8,-1.6)$) {my arc21};
			\pic at($(-6,2)+(0.8,1.4)$) {my arc12};
			\draw[dotted] ($(-6,2)+(-0.8,-1.6)$) -- ($(-6,2)+(-0.8,-0.4)$);
			\draw[dotted] ($(-6,2)+(0.8,-1.6)$) -- ($(-6,2)+(0.8,0.4)$);
			
			\pic at(-2,2) {my squ};
			\pic at($(-2,2)+(-0.8,0)$) {my arc21};
			\pic at($(-2,2)+(-0.8,-0.2)$) {my arc12};
			\pic at($(-2,2)+(0.8,0.2)$) {my arc11};
			\pic at($(-2,2)+(0.8,0)$) {my arc24};
			\draw[dotted] ($(-2,2)+(-0.8,-1.6)$) -- ($(-2,2)+(-0.8,-1.2)$);
			\draw[dotted] ($(-2,2)+(0.8,-1.6)$) -- ($(-2,2)+(0.8,1.2)$);
			
			\pic at(2,2) {my squ};
			\pic at($(2,2)+(-0.8,-1.4)$) {my arc11};
			\pic at($(2,2)+(-0.8,0)$) {my arc22};
			\pic at($(2,2)+(0.8,0)$) {my arc23};
			\pic at($(2,2)+(0.8,1.4)$) {my arc12};
			\draw[dotted] ($(2,2)+(-0.8,-1.6)$) -- ($(2,2)+(-0.8,-0.4)$);
			\draw[dotted] ($(2,2)+(0.8,-1.6)$) -- ($(2,2)+(0.8,0.4)$);
			
			\pic at(6,2) {my squ};
			\pic at($(6,2)+(-0.8,1.6)$) {my arc23};
			\pic at($(6,2)+(-0.8,-0.2)$) {my arc12};
			\pic at($(6,2)+(0.8,0.2)$) {my arc11};
			\pic at($(6,2)+(0.8,-1.6)$) {my arc22};
			\draw[dotted] ($(6,2)+(-0.8,-1.6)$) -- ($(6,2)+(-0.8,-1.2)$);
			\draw[dotted] ($(6,2)+(0.8,-1.6)$) -- ($(6,2)+(0.8,1.2)$);
			
			\pic at(-6,-2) {my squ};
			\pic at($(-6,-2)+(-0.8,-1.6)$) {my arc21};
			\pic at($(-6,-2)+(-0.8,0.2)$) {my arc11};
			\pic at($(-6,-2)+(0.8,-0.2)$) {my arc12};
			\pic at($(-6,-2)+(0.8,1.6)$) {my arc24};
			\draw[dotted] ($(-6,-2)+(-0.8,-1.6)$) -- ($(-6,-2)+(-0.8,1.2)$);
			\draw[dotted] ($(-6,-2)+(0.8,-1.6)$) -- ($(-6,-2)+(0.8,-1.2)$);
			
			\pic at(-2,-2) {my squ};
			\pic at($(-2,-2)+(-0.8,1.4)$) {my arc12};
			\pic at($(-2,-2)+(-0.8,0)$) {my arc24};
			\pic at($(-2,-2)+(0.8,0)$) {my arc21};
			\pic at($(-2,-2)+(0.8,-1.4)$) {my arc11};
			\draw[dotted] ($(-2,-2)+(-0.8,-1.6)$) -- ($(-2,-2)+(-0.8,0.4)$);
			\draw[dotted] ($(-2,-2)+(0.8,-1.6)$) -- ($(-2,-2)+(0.8,-0.4)$);
			
			\pic at(2,-2) {my squ};
			\pic at($(2,-2)+(-0.8,0)$) {my arc23};
			\pic at($(2,-2)+(-0.8,0.2)$) {my arc11};
			\pic at($(2,-2)+(0.8,-0.2)$) {my arc12};
			\pic at($(2,-2)+(0.8,0)$) {my arc22};
			\draw[dotted] ($(2,-2)+(-0.8,-1.6)$) -- ($(2,-2)+(-0.8,1.2)$);
			\draw[dotted] ($(2,-2)+(0.8,-1.6)$) -- ($(2,-2)+(0.8,-1.2)$);
			
			\pic at(6,-2) {my squ};
			\pic at($(6,-2)+(-0.8,1.4)$) {my arc12};
			\pic at($(6,-2)+(-0.8,-1.6)$) {my arc22};
			\pic at($(6,-2)+(0.8,1.6)$) {my arc23};
			\pic at($(6,-2)+(0.8,-1.4)$) {my arc11};
			\draw[dotted] ($(6,-2)+(-0.8,-1.6)$) -- ($(6,-2)+(-0.8,0.4)$);
			\draw[dotted] ($(6,-2)+(0.8,-1.6)$) -- ($(6,-2)+(0.8,-0.4)$);
			
			\pic at(-6,-6) {my squ};
			\pic at($(-6,-6)+(-0.8,-1.4)$) {my arc11};
			\pic at($(-6,-6)+(-0.8,0)$) {my arc24};
			\pic at($(-6,-6)+(0.8,0)$) {my arc21};
			\pic at($(-6,-6)+(0.8,1.4)$) {my arc12};
			\draw[dotted] ($(-6,-6)+(-0.8,-1.6)$) -- ($(-6,-6)+(-0.8,-0.4)$);
			\draw[dotted] ($(-6,-6)+(0.8,-1.6)$) -- ($(-6,-6)+(0.8,0.4)$);
			
			\pic at(6,-6) {my squ};
			\pic at($(6,-6)+(-0.8,0)$) {my arc23};
			\pic at($(6,-6)+(-0.8,-0.2)$) {my arc12};
			\pic at($(6,-6)+(0.8,0.2)$) {my arc11};
			\pic at($(6,-6)+(0.8,0)$) {my arc22};
			\draw[dotted] ($(6,-6)+(-0.8,-1.6)$) -- ($(6,-6)+(-0.8,-1.2)$);
			\draw[dotted] ($(6,-6)+(0.8,-1.6)$) -- ($(6,-6)+(0.8,1.2)$);
			
			\pic at(2,-6) {my squ};
			\pic at($(2,-6)+(-0.8,-1.4)$) {my arc11};
			\pic at($(2,-6)+(-0.8,-1.6)$) {my arc22};
			\pic at($(2,-6)+(0.8,1.6)$) {my arc23};
			\pic at($(2,-6)+(0.8,1.4)$) {my arc12};
			\draw[dotted] ($(2,-6)+(-0.8,-1.6)$) -- ($(2,-6)+(-0.8,-0.4)$);
			\draw[dotted] ($(2,-6)+(0.8,-1.6)$) -- ($(2,-6)+(0.8,0.4)$);
			
			\pic at(-2,-6) {my squ};
			\pic at($(-2,-6)+(-0.8,-1.6)$) {my arc21};
			\pic at($(-2,-6)+(-0.8,-0.2)$) {my arc12};
			\pic at($(-2,-6)+(0.8,0.2)$) {my arc11};
			\pic at($(-2,-6)+(0.8,1.6)$) {my arc24};
			\draw[dotted] ($(-2,-6)+(-0.8,-1.6)$) -- ($(-2,-6)+(-0.8,-1.2)$);
			\draw[dotted] ($(-2,-6)+(0.8,-1.6)$) -- ($(-2,-6)+(0.8,1.2)$);
		\end{tikzpicture}
		\caption{Types of $\mathcal A^{++}$, $\mathcal A^{+-}$, $\mathcal A^{-+}$, $\mathcal A^{--}$, $\mathcal B^{++}$, $\cdots$, $\mathcal D^{--}$}
	\end{figure}
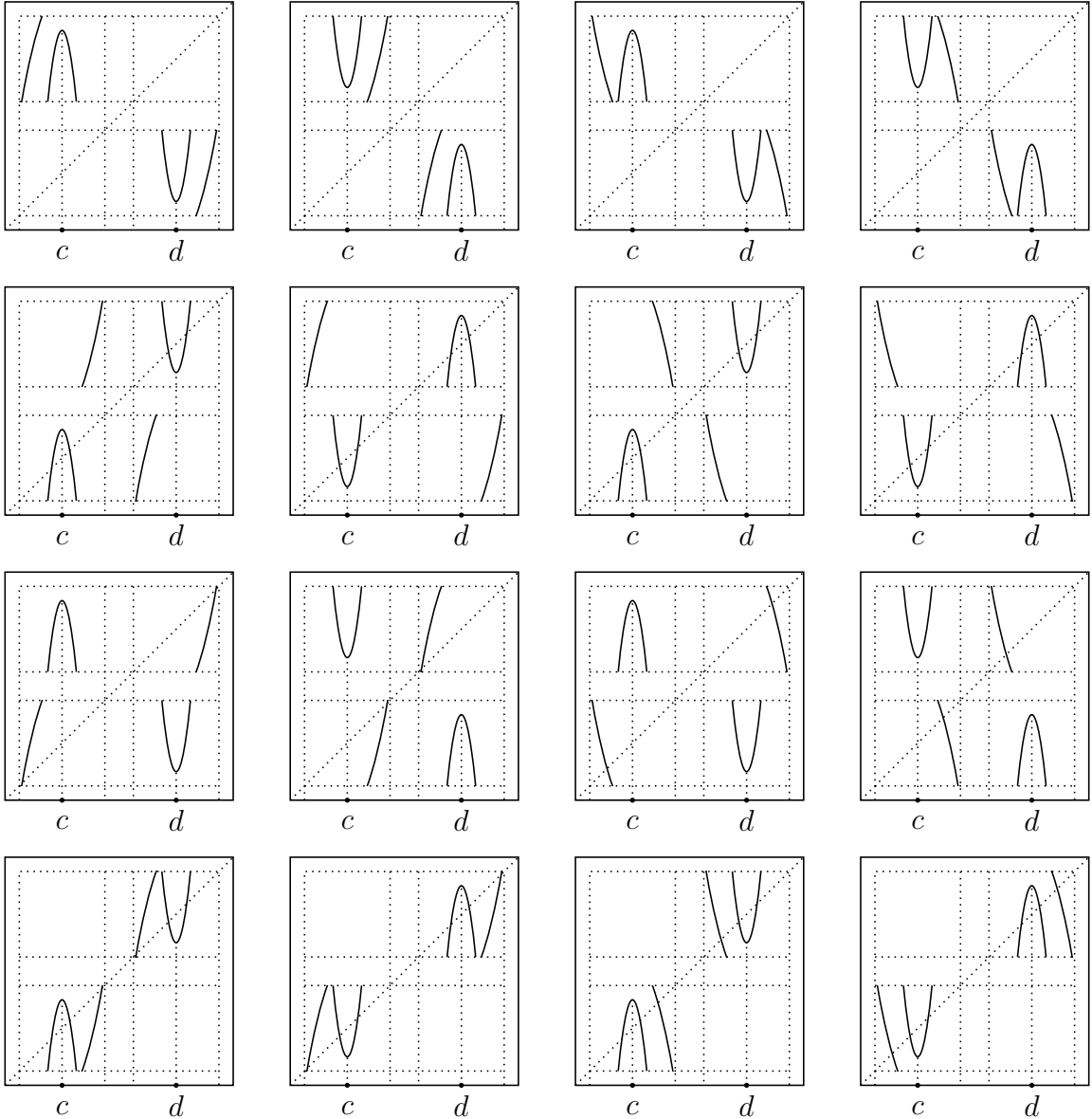

\subsection{Inducing} Consider the following ordering on the set of natural numbers, called the {\it admissible ordering}:
\[1 \prec 3 \prec 5 \prec 7 \prec \ldots \prec 2n+1 \prec \ldots \prec 2n \prec \ldots \prec 6 \prec 4 \prec 2.
\]
The following two lemmas show that the combinatorial type is related to the orientation on the monotone branches. Set up so that $\phi : U_0 \cup U_1 \cup V_0 \cup V_1  \to U \cup V$ and $\mathcal I \phi : U_0^1 \cup U_1^1 \cup V_0^1 \cup V_1^1 \to U_0 \cup V_0$.

\begin{lem}
Suppose that $\phi \in \mathcal F_s^-$ has combinatorial type $(r, t)$ with $r \geq 2$, $t \geq 1$,
\begin{itemize}
\item[(1)] if $\phi$ is of type $\mathcal A$ and $\mathcal B$, then $t<r$;
\item[(2)] if $\phi$ is of type $\mathcal C$, then $t \prec r$ in the admissible ordering.
\end{itemize}
\end{lem}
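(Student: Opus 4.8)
\noindent\emph{Proof proposal.} The plan is an itinerary analysis of the turning points under $f$, organised by type ($\mathcal A$, $\mathcal B$ or $\mathcal C$) and by subtype inside $\mathcal F_*^-$ (both non-central branches orientation-reversing, exactly one of $c,d$ a local maximum). By the $I\leftrightarrow J$, $c\leftrightarrow d$ symmetry of $\mathcal F_*$ it is enough to follow the orbit of $c$; the orbit of $d$ is then forced by the last clause of Definition 2.2, which puts $f^i(c)$ and $f^i(d)$ in opposite boxes. A preliminary remark: writing out condition (1) of the combinatorial type against the type conditions gives $f(I_c^1)\supseteq J_d^1$ and $f(J_d^1)\supseteq I_c^1$ in types $\mathcal A$ and $\mathcal C$, and $f(I_c^1)\supseteq I_c^1$, $f(J_d^1)\supseteq J_d^1$ in type $\mathcal B$; in every case $f(c)$, being the extreme value of the central branch, is an endpoint of the interval $f(I_c^1)$, and the two endpoints of $I_c^2$ have the same $f$-image (otherwise the orbit of $I_c^2$ would meet a critical point before returning).

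For types $\mathcal A$ and $\mathcal B$ the non-central branches cross boxes ($f(I_1^1)=J^0$, $f(J_1^1)=I^0$), so the segment $c,f(c),\dots,f^r(c)$ alternates rigidly between $I^0$ and $J^0$; the parity of $r$ then decides whether $f^r(c)\in I_c^1$ or $f^r(c)\in J_d^1$, and non-centrality puts $f^r(c)$ in $I_1^2$ resp.\ $J_1^2$. I would then compare this with the orbit segment running from $f^r(c)$ to the first return of that domain, which has length $t$ and the same alternating shape but issues from a point which lies in $I_c^1$ (resp.\ $J_d^1$) yet, being outside $I_c^2$ (resp.\ $J_d^2$), maps under $f$ strictly beyond $f(I_c^2)$ on the side away from $f(c)$ — this uses the folding of the central branch and the equal-image property of $\partial I_c^2$ above. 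Since the escape time from $I_1^1\cup J_1^1$ is monotone in position (the relevant return map $f^2$ restricted to the alternating orbits is orientation-preserving), this strict displacement forces $t<r$.

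For type $\mathcal C$ the non-central branches preserve boxes ($f(I_1^1)=I^0$, $f(J_1^1)=J^0$), so after one step the orbit of $c$ falls into $J_1^1$ and lingers there until step $r-1$: $f(c)$ has escape time $r-1$ from $J_1^1$ under the orientation-reversing branch $\phi:=f|J_1^1$. Symmetrically, once the orbit of $J_1^2$ re-enters the $J$-side it spends $t-1$ steps in $J_1^1$, so $f^{r+1}(d)\in f(I_1^2)\subset f(I_c^1)\subset J^0$ has escape time $t-1$ from $J_1^1$. Running the same comparison as before, the nested escape-time levels $V_1\supset V_2\supset\cdots$ of $\phi$ in $J_1^1$ shrink to the repelling fixed point of $\phi$ and satisfy $V_k=\phi^{-1}(V_{k-1})$ with $\phi$ orientation-reversing, so consecutive levels interchange their left and right endpoints. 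The fact that $f^{r+1}(d)$ lies strictly to one side of $f(I_c^2)$ — with $f(c)$ an endpoint of $f(I_c^1)$, and the side fixed by non-centrality together with $\mathcal I f$ being again of type $\mathcal A,\mathcal B,\mathcal C$ or $\mathcal D$ — therefore becomes a comparison of $t$ and $r$ that is sensitive to the parities of $t-1$ and $r-1$, which is exactly what the ordering $1\prec3\prec\cdots\prec4\prec2$ records; from this one reads off $t\prec r$.

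The main obstacle is this last step. For types $\mathcal A,\mathcal B$ the inequality $t<r$ should drop out of a fairly mechanical bookkeeping of two orbit segments with an orientation-preserving return map. In type $\mathcal C$ the return map on the lingering side is orientation-reversing, so return times become parity-sensitive, and the real work is to track the left/right swapping of the escape-time levels carefully enough to reach the correct conclusion $t\prec r$ in the admissible ordering and not merely the numerical $t\le r$.
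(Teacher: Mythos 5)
Your proposal is correct and follows essentially the same route as the paper: the escape-time levels you describe are exactly the paper's pullback intervals $K_n$ inside the non-central branch, with the orientation-preserving two-step return giving monotone nesting (hence $t<r$) in types $\mathcal A,\mathcal B$, and the orientation-reversing single branch giving levels that alternate around its fixed point (hence the admissible ordering) in type $\mathcal C$. The parity bookkeeping you defer is the one-line observation that the left-to-right order of the alternating levels is $K_2<K_4<\cdots<\beta<\cdots<K_3<K_1$, which together with $f(I_1^2)$ lying on the far side of $f(I_c^2)$ from the extreme value $f(c)$ reads off directly as $t\prec r$.
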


\begin{proof}
If $t=1$, obviously $t < r$. So it suffices to prove for $t \geq 2$. We argue as \cite[Lemma 2.4]{JL}.

(1) Without loss of generality, we may assume that $\phi$ is of type $\mathcal A^{-+}$, the other cases are similar. Then $\phi : U_1 \to V$ and $\phi : V_1 \to U$ are decreasing while $\phi|U_0$ is local maximal at $c$ and $\phi|V_0$ is local minimal at $d$. Since $\phi : U_0 \to V$ and $\phi(c) \in V_1$, $V_1$ is on the right side of $V_0$. Similarly, $U_1$ is on the left side of $U_0$. Since $\phi(U_1) \supset V_1$ and $\phi(V_1) \supset U_1$, there exists a periodic point $\alpha \in V_1$ with period 2. For $n \geq 1$, define
\begin{equation*}\label{eqn:kn}
K_n = \begin{cases}
\phi^{-n}(U_0) \cap V_1  & \text{ if $n$ is odd, }
\\
\phi^{-n}(V_0) \cap V_1 & \text{ if $n$ is even. }
\end{cases}
\end{equation*}
Then $K_n$ lie in $V_1$ as $K_1<K_2<K_3<\ldots < K_n <\ldots <\alpha$. Since $\phi|U_0$ is local maximal at c, $\phi(U_1^1) < \phi(U_0^1)$. By continuity, $\phi(U_0^1) \subset K_{r-1}$ and $\phi(U_1^1) \subset K_{t-1}$. This implies $t < r$.

(2) Since $\phi$ is of type $\mathcal C$, $\phi(U_1) \supset U_1$ while $\phi(V_1) \supset V_1$. Assume that $\phi$ is of type $\mathcal C^{-+}$. Let $K_n = (\phi|V_1)^{-n}(V_0)$ for $n \geq 1$. Then $K_n$ lie in $V_1$ as $K_2< K_4< \ldots < K_{2n} < \ldots < \beta < \ldots < K_{2n+1} < \ldots < K_3 < K_1$ where $\beta$ is a fixed point. Argue as above, we have $t \prec r$.
\end{proof}

\begin{lem}
Suppose that $\phi \in \mathcal F_s^+$ has combinatorial type $(r, t)$ with $r \geq 2$, $t \geq 1$,
\begin{itemize}
\item[(1)] if $\phi$ is of type $\mathcal A$ and $\mathcal B$, then $t \prec r$ in the admissible ordering;
\item[(2)] if $\phi$ is of type $\mathcal C$, then $t < r$.
\end{itemize}
\end{lem}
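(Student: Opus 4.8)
The plan is to rerun the argument of the previous lemma, the sole new ingredient being that the two non-central branches $f|I_1^1$ and $f|J_1^1$ are now orientation-preserving instead of orientation-reversing; I will trace how this single change propagates through the construction of the pull-back intervals, the net effect being that it \emph{exchanges} the two conclusions. As before it suffices to treat $t\geq 2$, since for $t=1$ both $t\prec r$ and $t<r$ hold trivially because $r\geq 2$. Using the symmetry of the setting (conjugation by $x\mapsto 1-x$ swaps the two boxes and the type of the local extremum at the critical points, while preserving $\mathcal F_*^+$) one may fix one representative subtype in each case; I take $\mathcal A^{++}$, $\mathcal B^{++}$ for (1) and $\mathcal C^{++}$ for (2), with $f$ having a local maximum at $c$.

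For (1), since $f$ is of type $\mathcal A$ or $\mathcal B$, the branches $f|I_1^1$ and $f|J_1^1$ interchange the boxes $I^0$ and $J^0$, so after the first step the orbit of $c$ alternates between $I_1^1$ and $J_1^1$ while that of $d$ alternates in opposite phase (a defining property of $\mathcal F_*$), and there is a period-$2$ point $\alpha$ in the non-central domain visited by $c$ at time $1$. Following the previous lemma I set, for $n\geq1$, $K_n$ to be the component of $f^{-n}(I_c^1\cup J_d^1)$ meeting the critical orbit and contained in that non-central domain, its image under $f^n$ being $I_c^1$ for $n$ odd and $J_d^1$ for $n$ even. The positions of the level-$1$ domains inside $I^0$ and $J^0$ forced by the subtype (Figure 2), together with the fact that $(f|I_1^1)^{-1}$ and $(f|J_1^1)^{-1}$ are now orientation-preserving, yield by a short induction on $n$ that — in contrast with the monotone chain $K_1<K_2<\cdots$ of the previous lemma — the $K_n$ accumulate on $\alpha$ exactly in the admissible ordering
\[
K_1\prec K_3\prec K_5\prec\cdots\prec K_{2n}\prec\cdots\prec K_4\prec K_2 .
\]
Because $I_c^2$ and $I_1^2$ lie on opposite sides of $c$ inside the central domain, the turning point of $f$ at $c$ forces $f(I_1^2)<f(I_c^2)$; since $f(I_c^2)\subset K_{r-1}$ and $f(I_1^2)\subset K_{t-1}$ (from the definitions of $r$ and $t$), reading off positions in the display gives $t\prec r$.

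For (2), since $f$ is of type $\mathcal C$ one has $f(I_1^1)=I^0$ and $f(J_1^1)=J^0$, so once the orbit of $c$ enters $J_1^1$ it never leaves, and the return dynamics near $c$ is governed by the single monotone branch $\phi:=f|J_1^1:J_1^1\to J^0$, which has a unique repelling fixed point $\beta\in J_1^1$. With $K_n:=\phi^{-n}(J_d^1)\subset J_1^1$, the orientation-preserving hypothesis now places all the $K_n$ on one side of $\beta$, so $K_1,K_2,K_3,\dots$ is a \emph{monotone} sequence shrinking to $\beta$ — the opposite of the interleaved pattern obtained when $\phi$ reverses orientation. As in (1), $f(I_c^2)\subset K_{r-1}$, $f(I_1^2)\subset K_{t-1}$ and $f(I_1^2)<f(I_c^2)$; reading off positions in this monotone sequence gives $t<r$.

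The main obstacle is the inductive claim in (1): one must first pin down, subtype by subtype, the precise left/right placement of $I_1^1,I_c^1$ in $I^0$ and of $J_d^1,J_1^1$ in $J^0$ (determined by the orientations of the non-central and central branches together with the properness conditions of Definition 2.1), and then check that the alternation of the destinations $I_c^1,J_d^1$ combines with the now orientation-preserving pull-backs to yield precisely the odd-increasing / even-decreasing pattern in the display. Part (2) is the direct analogue of the corresponding case of the previous lemma, and the bookkeeping relating $r,t$ to the intervals $K_n$ is unchanged.
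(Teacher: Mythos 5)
Your overall strategy is exactly the paper's: reduce to $t\geq 2$, fix a representative subtype, build the pull-back intervals $K_n$ accumulating on a period-$2$ point (cases $\mathcal A,\mathcal B$) or a fixed point (case $\mathcal C$), and conclude from $f(I_c^2)\subset K_{r-1}$, $f(I_1^2)\subset K_{t-1}$ and $f(I_1^2)<f(I_c^2)$. However, the one substantive claim you commit to in case (1) — the displayed arrangement of the $K_n$ — is, read as a spatial ordering, the mirror image of the correct one, and the whole content of the lemma sits precisely there. For the representative subtype $\mathcal A^{++}$ one has $I_1^1<I_c^1<J_d^1<J_1^1$, and a direct check of $K_1=(f|J_1^1)^{-1}(I_c^1)$ and $K_2=(f|J_1^1)^{-1}\bigl((f|I_1^1)^{-1}(J_d^1)\bigr)$ shows $K_2<\alpha<K_1$, whence the pattern is $K_2<K_4<\cdots<\alpha<\cdots<K_3<K_1$: the \emph{even}-indexed intervals increase to $\alpha$ from below and the \emph{odd}-indexed ones decrease to it from above. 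Your display places $K_1$ innermost on one side and $K_2$ outermost on the other, i.e.\ the spatial order of the $K_n$ realizes the admissible order of $n$ itself; but since the relevant intervals are $K_{t-1}$ and $K_{r-1}$, the order must realize the admissible order of $n+1$. With your arrangement, $K_{t-1}<K_{r-1}$ reads off as $(t-1)\prec(r-1)$, which is equivalent to $r\prec t$ — the opposite of what is claimed. The same index shift is why the conclusions of the two lemmas swap when the orientation of the non-central branches flips, so this is not a cosmetic point.

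The analogous issue is present in case (2): ``monotone'' is not enough, since a decreasing chain $K_1>K_2>\cdots>\beta$ together with $K_{t-1}<K_{r-1}$ would give $t>r$. For $\mathcal C^{++}$ one must actually verify that $J_d^1$ lies on the side of $J_1^1$ that makes $(f|J_1^1)^{-1}$ push the $K_n$ upward, i.e.\ $K_1<K_2<\cdots<\alpha$, and check that this choice is the one compatible with $c$ being a local maximum (which is what forces $f(I_1^2)<f(I_c^2)$ rather than the reverse). You correctly flag all of this as ``the main obstacle'' and defer it, but since the left/right placements and orientations are exactly what decide between $t\prec r$ and $r\prec t$ (resp.\ $t<r$ and $t>r$), deferring them — and then writing down the reversed pattern — leaves the proof incomplete at its only nontrivial step. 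The fix is short: establish $K_2<\alpha<K_1$ for the chosen subtype as above and run the induction from there.
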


\begin{proof}
It suffices to prove for $t \geq 2$.

(1) We may assume that $\phi$ is of type $\mathcal A^{++}$. Then $\phi:U_1 \to V$ and $\phi: V_1 \to U$ are increasing while $\phi|U_0$ is local maximal at $c$ and $\phi|V_0$ is local minimal at $d$. For $n \geq 1$, define
\begin{equation*}\label{eqn:kn}
K_n = \begin{cases}
\phi^{-n}(U_0) \cap V_1  & \text{ if $n$ is odd, }
\\
\phi^{-n}(V_0) \cap V_1 & \text{ if $n$ is even. }
\end{cases}
\end{equation*}
Then $K_n$ lie in $V_1$ as $K_2< K_4< \ldots < K_{2n} < \ldots < \alpha < \ldots < K_{2n+1} < \ldots < K_3 < K_1$ where $\alpha$ is a periodic point with period 2. This implies $t \prec r$.

(2) Assume that $\phi$ is of type $\mathcal C^{++}$. Then $\phi:U_1 \to U$ and $\phi: V_1 \to V$ are increasing while $\phi|U_0$ is local maximal at $c$ and $\phi|V_0$ is local minimal at $d$. Since $\phi(V_1) \supset V_1$, define $K_n = (\phi|V_1)^{-n}(V_0)$ for $n \geq 1$. Then $K_n$ lie in $V_1$ as $K_1<K_2<K_3<\ldots < K_n <\ldots <\alpha$ where $\alpha$ is a fixed point. The same argument shows $t < r$.
\end{proof}

\begin{lem}
Suppose that $\phi \in \mathcal F_s$ has combinatorial type $(r, t)$ with $r \geq 2$, $t \geq 1$. Assume that $\mathcal I \phi (U_0^1 \cup V_0^1) \supset U_0^1 \cup V_0^1$, then $\mathcal I \phi \in \mathcal F_s$.
\end{lem}

\begin{proof}
Assume that $\phi$ is of type $\mathcal A^{-+}$ and $t < r$ both are odd by Lemma 2.1. The other cases are similar.  Then $\phi : U_1 \to V$ and $\phi : V_1 \to U$ are decreasing while $\phi|U_0$ is local maximal at $c$ and $\phi|V_0$ is local minimal at $d$. Moreover, $U_1 < U_0$ and $V_0 < V_1$.

Since $r$ is odd, $\mathcal I \phi | U_0^1 = \phi^r |U_0^1 = (\phi|U_1 \circ \phi|V_1)^{(r-1)/2} \circ (\phi|U_0^1)$ is local maximal at $c$ and $\phi^r(U_0^1) \subset V_0$. Since $\mathcal I \phi(U_0^1) \supset V_0^1$, $V_1^1$ is on the right side of $V_0^1$. Then $\phi|V_1^1$ is increasing. Since $t$ is odd, $\phi^t|V_1^1 = (\phi|V_1 \circ \phi|U_1)^{(t-1)/2} \circ (\phi|V_1^1)$ is increasing with $\phi^t(V_1^1) = U_0$. Similarly we have $\phi^r | V_0^1$ is local minimal at $d$ and $\phi^r(V_0^1) \subset U_0$. And also $\phi^t|U_1^1$ is increasing with $\phi^t(U_1^1) = V_0$. Hence $\mathcal I \phi$ belongs to $\mathcal F_s$. In particular, we have shown that $\mathcal I \phi$ is of type $\mathcal A^{++}$.

\end{proof}

The next two lemmas clarify all the possible cases for the inducing step, the proof is similar with Lemma 2.3. Given any integer $r, t \geq 1$, denote $e(r, t) = ((-1)^r, (-1)^t)$.

\begin{lem}
Suppose that $\phi \in \mathcal F_s^-$ has combinatorial type $(r, t)$ with $r \geq 2$, $t \geq 1$. Assume that $\mathcal I \phi (U_0^1 \cup V_0^1) \supset U_0^1 \cup V_0^1$, then we have the following:
\begin{itemize}
\item[(1)] if $\phi$ is of type $\mathcal A^{-+}$ with $t < r$,
\begin{itemize}
\item[(1.1)] if $e(r, t) = (-1, -1)$, then $\mathcal I \phi$ is of type $\mathcal A^{++}$;
\item[(1.2)] if $e(r, t) = (1, -1)$, then $\mathcal I \phi$ is of type $\mathcal B^{+-}$;
\item[(1.3)] if $e(r, t) = (-1, 1)$, then $\mathcal I \phi$ is of type $\mathcal C^{-+}$;
\item[(1.4)] if $e(r, t) = (1, 1)$, then $\mathcal I \phi$ is of type $\mathcal D$.
\end{itemize}
\item[(2)] if $\phi$ is of type $\mathcal A^{--}$ with $t < r$,
\begin{itemize}
\item[(2.1)] if $e(r, t) = (-1, -1)$, then $\mathcal I \phi$ is of type $\mathcal A^{+-}$;
\item[(2.2)] if $e(r, t) = (1, -1)$, then $\mathcal I \phi$ is of type $\mathcal B^{++}$;
\item[(2.3)] if $e(r, t) = (-1, 1)$, then $\mathcal I \phi$ is of type $\mathcal C^{--}$;
\item[(2.4)] if $e(r, t) = (1, 1)$, then $\mathcal I \phi$ is of type $\mathcal D$.
\end{itemize}
\item[(3)] if $\phi$ is of type $\mathcal B^{-+}$ with $t < r$,
\begin{itemize}
\item[(3.1)] if $e(r, t) = (-1, -1)$, then $\mathcal I \phi$ is of type $\mathcal D$;
\item[(3.2)] if $e(r, t) = (1, -1)$, then $\mathcal I \phi$ is of type $\mathcal C^{--}$;
\item[(3.3)] if $e(r, t) = (-1, 1)$, then $\mathcal I \phi$ is of type $\mathcal B^{++}$;
\item[(3.4)] if $e(r, t) = (1, 1)$, then $\mathcal I \phi$ is of type $\mathcal A^{+-}$.
\end{itemize}
\item[(4)] if $\phi$ is of type $\mathcal B^{--}$ with $t < r$,
\begin{itemize}
\item[(4.1)] if $e(r, t) = (-1, -1)$, then $\mathcal I \phi$ is of type $\mathcal D$;
\item[(4.2)] if $e(r, t) = (1, -1)$, then $\mathcal I \phi$ is of type $\mathcal C^{-+}$;
\item[(4.3)] if $e(r, t) = (-1, 1)$, then $\mathcal I \phi$ is of type $\mathcal B^{+-}$;
\item[(4.4)] if $e(r, t) = (1, 1)$, then $\mathcal I \phi$ is of type $\mathcal A^{++}$.
\end{itemize}
\item[(5)] if $\phi$ is of type $\mathcal C^{-+}$ with $t \prec r$,
\begin{itemize}
\item[(5.1)] if $e(r, t) = (-1, -1)$ then $\mathcal I \phi$ is of type $\mathcal A^{++}$;
\item[(5.2)] if $e(r, t) = (1, -1)$, then $\mathcal I \phi$ is of type $\mathcal A^{--}$;
\item[(5.3)] if $e(r, t) = (1, 1)$, then $\mathcal I \phi$ is of type $\mathcal A^{+-}$.
\end{itemize}
\item[(6)] if $\phi$ is of type $\mathcal C^{--}$ with $t \prec r$,
\begin{itemize}
\item[(6.1)] if $e(r, t) = (-1, -1)$, then $\mathcal I \phi$ is of type $\mathcal A^{+-}$;
\item[(6.2)] if $e(r, t) = (1, -1)$, then $\mathcal I \phi$ is of type $\mathcal A^{-+}$;
\item[(6.3)] if $e(r, t) = (1, 1)$, then $\mathcal I \phi$ is of type $\mathcal A^{++}$.
\end{itemize}
\end{itemize}
\end{lem}

\begin{lem}
Suppose that $\phi \in \mathcal F_s^+$ has combinatorial type $(r, t)$ with $r \geq 2$, $t \geq 1$. Assume that $\mathcal I \phi (U_0^1 \cup V_0^1) \supset U_0^1 \cup V_0^1$, then we have the following:
\begin{itemize}
\item[(1)] if $\phi$ is of type $\mathcal A^{++}$ with $t \prec r$,
\begin{itemize}
\item[(1.1)] if $e(r, t) = (-1, -1)$, then $\mathcal I \phi$ is of type $\mathcal A^{++}$;
\item[(1.2)] if $e(r, t) = (1, -1)$, then $\mathcal I \phi$ is of type $\mathcal B^{-+}$;
\item[(1.3)] if $e(r, t) = (1, 1)$, then $\mathcal I \phi$ is of type $\mathcal D$.
\end{itemize}
\item[(2)] if $\phi$ is of type $\mathcal A^{+-}$ with $t \prec r$,
\begin{itemize}
\item[(2.1)] if $e(r, t) = (-1, -1)$, then $\mathcal I \phi$ is of type $\mathcal A^{+-}$;
\item[(2.2)] if $e(r, t) = (1, -1)$, then $\mathcal I \phi$ is of type $\mathcal B^{--}$;
\item[(2.3)] if $e(r, t) = (1, 1)$, then $\mathcal I \phi$ is of type $\mathcal D$.
\end{itemize}
\item[(3)] if $\phi$ is of type $\mathcal B^{++}$ with $t \prec r$,
\begin{itemize}
\item[(3.1)] if $e(r, t) = (-1, -1)$, then $\mathcal I \phi$ is of type $\mathcal D$;
\item[(3.2)] if $e(r, t) = (1, -1)$ then $\mathcal I \phi$ is of type $\mathcal C^{++}$;
\item[(3.3)] if $e(r, t) = (1, 1)$, then $\mathcal I \phi$ is of type $\mathcal A^{++}$.
\end{itemize}
\item[(4)] if $\phi$ is of type $\mathcal B^{+-}$ with $t \prec r$,
\begin{itemize}
\item[(4.1)] if $e(r, t) = (-1, -1)$, then $\mathcal I \phi$ is of type $\mathcal D$;
\item[(4.2)] if $e(r, t) = (1, -1)$, then $\mathcal I \phi$ is of type $\mathcal C^{+-}$;
\item[(4.3)] if $e(r, t) = (1, 1)$, then $\mathcal I \phi$ is of type $\mathcal A^{+-}$.
\end{itemize}
\item[(5)] if $\phi$ is of type $\mathcal C^{++}$ with $t < r$, then $\mathcal I \phi$ is of type $\mathcal A^{++}.$
\item[(6)] if $\phi$ is of type $\mathcal C^{+-}$ with $t < r$, then $\mathcal I \phi$ is of type $\mathcal A^{+-}.$
\end{itemize}
\end{lem}

\subsection{Admissible condition} In this subsection we state and prove the Admissible condition. The following several lemmas will be useful.

\begin{lem}
Suppose $f \in \mathscr G$, then there exists a fixed point $p \in (c,d)$.
\end{lem}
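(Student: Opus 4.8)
The plan is to bypass the case analysis used to construct $I^0$ and $J^0$ and instead exploit the defining feature of $\mathscr B_*$, namely combinatorial symmetry. Since $f\in\mathscr G\subset\mathscr B_*$, there is an orientation-reversing homeomorphism $h:I\to I$ with $h\circ f=f\circ h$. The first step is the elementary remark that an orientation-reversing self-homeomorphism of $[0,1]$ is strictly decreasing and swaps the endpoints, so the continuous strictly decreasing function $x\mapsto h(x)-x$ is positive at $0$ and negative at $1$; hence $h$ has a \emph{unique} fixed point $q\in(0,1)$.

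The second step is to pin down the location of $q$ relative to the turning points. Conjugacy by a homeomorphism preserves monotonicity: for any interval $J$, $f$ is monotone on $h(J)$ iff $f\circ h$ is monotone on $J$ iff $h^{-1}\circ f\circ h=f$ is monotone on $J$. Thus $h$ carries maximal intervals of monotonicity of $f$ to maximal intervals of monotonicity of $f$. For a bimodal map these are exactly $[0,c]$, $[c,d]$, $[d,1]$, so $h$ permutes this triple, and being order-reversing it must send $[0,c]$ to $[d,1]$, fix $[c,d]$ setwise, and send $[d,1]$ to $[0,c]$. Matching endpoints (and using $h(0)=1$, $h(1)=0$) gives $h(c)=d$ and $h(d)=c$, and $h$ restricts to a decreasing homeomorphism of $[c,d]$; in particular its unique fixed point $q$ lies in $(c,d)$.

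The last step upgrades ``fixed point of $h$'' to ``fixed point of $f$''. Evaluating $h\circ f=f\circ h$ at $q$ yields $h(f(q))=f(h(q))=f(q)$, so $f(q)$ is a fixed point of $h$; by uniqueness $f(q)=q$. Setting $p:=q\in(c,d)$ then gives the desired fixed point.

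I do not anticipate a genuine obstacle. The only point requiring a little care is Step 2 — justifying that $h$ maps turning points to turning points — which is entirely formal once one notes that monotonicity of $f$ on $J$ is equivalent to monotonicity of $f$ on $h(J)$ via $f=h^{-1}\circ f\circ h$. (Alternatively, this lemma could be proved by checking that the negative-bimodal cases in which no fixed point lies in $(c,d)$ are incompatible with the recurrence of both turning points and $\omega(c)=\omega(d)$, but the symmetry argument above is shorter and cleaner.)
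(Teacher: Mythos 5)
Your argument is correct, and it takes a genuinely different route from the paper's. You read Definition 1.1 literally: the commuting orientation-reversing homeomorphism $h$ has a unique fixed point $q\in(0,1)$; conjugation by $h$ preserves maximal intervals of monotonicity (via $f=h^{-1}\circ f\circ h$), which forces $h(c)=d$, $h(d)=c$ and hence $q\in(c,d)$; and $h(f(q))=f(h(q))=f(q)$ makes $f(q)$ a fixed point of $h$, so $f(q)=q$ by uniqueness. Every step is sound, and the paper's own Remark~1 asserts without proof that ``such a fixed point always exists for combinatorially symmetric maps'' --- your three steps are essentially the missing justification of that claim. The paper argues quite differently: for $f\in\mathscr B^+$ the fixed point in $(c,d)$ exists automatically (otherwise $\partial I$ would carry an attracting fixed point), and for $f\in\mathscr B^-$ with a single fixed point $p\notin(c,d)$ it invokes conditions (1) and (2) of Definition 1.2 to locate $f(c)$ and $f(d)$ and conclude $f((c,d))\supset(c,d)$, which produces a fixed point in $(c,d)$ and a contradiction. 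The trade-off is worth noting: your proof is shorter, self-contained, and yields the extra information that the fixed point is $h$-symmetric, but it uses combinatorial symmetry in an essential way; the paper's proof uses only properties (1)--(2) of Definition 1.2 and therefore extends to the non-symmetric generalizations mentioned in Remark~2 (the ``type symmetric'' maps), for which your argument would not apply.
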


\begin{proof}
It suffices to prove this lemma when $f$ is negative. Suppose that $f$ has one fixed point $p$ with three preimages and $p \notin (c,d)$. We may assume that $p \in (d,1)$. Let $p_1,p_2 \in f^{-1}(p)$ be such that $p_1 < p_2 <p$. Then $f|(p_1,c)$ and $f|(d,p)$ are decreasing. By property (2) and (3) in Definition 1.1, $f(c) < p_1 <c$ and $f(d) > p > d$. Then $f((c,d))\supset (c,d)$, hence contains a fixed point. A contradiction.
\end{proof}

\begin{lem}
Suppose $f \in \mathscr G$,  
\begin{itemize}
\item if $f$ is positive, then $g_1$ is of type $\mathcal A^{++}$; 
\item if $f$ is negative, then $g_1$ is of type $\mathcal C^{-+}$.
\end{itemize}
\end{lem}

\begin{proof}
We may assume that $f \in \mathscr B^+$. Then $f$ has a fixed point $p \in (c,d)$ which has three preimages specified by $p_1 < p < p_2$. Let $I^0=(p_1,p)$, $J^0 = (p ,p_2)$. By property (2) in Definition 1.1, $f(c), f(d) \notin I^0 \cup J^0$, then $f(c) \in (p_2,1)$ and $f(d) \in (0, p_1)$. Since $f|(p_2,1)$ is increasing, $f^2(c) >p$. Since $f^2(c), f^3(c) \in I^0 \cup J^0$, then $f^2(c) \in J^0$ and $f^3(c) \in I^0$. Therefore $g_1|I^1 = f^2$ with $g_1 : I^1 \to J^0$ is local maximal at $c$. Since $g_1(I^1) \supset J^1$, $D^1$ is on the right side of $d$. Then $g_1|D^1$ is increasing with $g_1(D^1) =f(D^1) = I^0$. Similarly, we have $g_1 : J^1 \to I^0$ is local minimal at $d$ and $g_1(C^1) = J^0$ is increasing. This shows that $g_1 : C^1 \cup I^1 \cup J^1 \cup D^1  \to I^0 \cup J^0 $ is of type $\mathcal A^{++}$. The case for $f$ negative is similar.
\end{proof}

\begin{figure}
		\centering
		\begin{tikzpicture}[scale=1, line width=0.6pt, >=stealth]
			\pic at(0,0) {my square};
			\draw[dotted] (1.5,0) -- (1.5,-1.5) -- (0,-1.5) -- (0,1.5) -- (-1.5,1.5) -- (-1.5,0) -- cycle;
			\draw plot [smooth, tension=0.6] coordinates {(-2,-2) (-0.75,1.8) (0.75,-1.8) (2,2)};
			\draw[dotted] (-0.75,-2) -- (-0.75,1.8);
			\draw[dotted] (0.75,-2) -- (0.75,-1.8);
			\node at(0.6,1.2) {$f$};
			
			\pic at(6,0) {my square};
			\draw[dotted] (7.5,0) -- (7.5,-1.5) -- (6,-1.5) -- (6,1.5) -- (4.5,1.5) -- (4.5,0) -- cycle;
			\draw plot [smooth, tension=1.2] coordinates {(5,0) (5.25,1.1) (5.5,0)};
			\draw plot [smooth, tension=1.2] coordinates {(6.5,0) (6.75,-1.1) (7,0)};
			\draw[dotted] (5.25,-2) -- (5.25,1.1);
			\draw[dotted] (6.75,-2) -- (6.75,-1.1);
			\node at(6.6,1.2) {$g_{1}$};
			\clip (4.5,0) rectangle (5.25,1.5) (6.75,-1.5) rectangle (7.5,0);
			\draw plot [smooth, tension=0.6] coordinates {(4.5,0) (5.25,1.8) (6.75,-1.8) (7.5,0)};
		\end{tikzpicture}
		\caption{graph of $f\in\mathscr B^+$ and $g_{1}$ of type $\mathcal{A}^{++}$}
	
		\centering
		\begin{tikzpicture}[scale=1, line width=0.6pt, >=stealth]
			\pic at(0,0) {my square};
			\draw[dotted] (1.5,0) -- (1.5,1.5) -- (0,1.5) -- (0,-1.5) -- (-1.5,-1.5) -- (-1.5,0) -- cycle;
			\draw plot [smooth, tension=0.6] coordinates {(-2,2) (-0.75,-1.8) (0.75,1.8) (2,-2)};
			\draw[dotted] (-0.75,-2) -- (-0.75,-1.8);
			\draw[dotted] (0.75,-2) -- (0.75,1.8);
			\node at(-1.3,1.2) {$f$};
			
			\pic at(6,0) {my square};
			\draw[dotted] (7.5,0) -- (7.5,1.5) -- (6,1.5) -- (6,-1.5) -- (4.5,-1.5) -- (4.5,0) -- cycle;
			\draw plot [smooth, tension=1.2] coordinates {(5,0) (5.25,1.1) (5.5,0)};
			\draw plot [smooth, tension=1.2] coordinates {(6.5,0) (6.75,-1.1) (7,0)};
			\draw[dotted] (5.25,-2) -- (5.25,1.1);
			\draw[dotted] (6.75,-2) -- (6.75,-1.1);
			\node at(4.7,1.2) {$g_{1}$};
			\clip (4.5,0) rectangle (5.25,-1.5) (6.75,1.5) rectangle (7.5,0);
			\draw plot [smooth, tension=0.6] coordinates {(4.5,0) (5.25,-1.8) (6.75,1.8) (7.5,0)};
		\end{tikzpicture}
		\caption{graph of $f\in\mathscr B^-$ and $g_{1}$ of type $\mathcal{C}^{-+}$}
	\end{figure}

\begin{cor}
Suppose $f \in \mathscr G$, then the $n$-th generalized renormalization $g_n$ is symmetric box mapping. Moreover, $g_n$ has combinatorial type $(r_n, t_n)$.
\end{cor}

\begin{proof}
By Lemma 2.7, $g_1$ is of type $\mathcal A^{++}$ or type $\mathcal C^{-+}$. By property (4), (6) and (7) in Definition 1.1, $g_1$ has combinatorial type $(r_1, t_1)$. By Lemma 2.3, $g_2$ is symmetric box mapping. Now this corollary follows by induction. 
\end{proof}

Corollary 2.8 shows that each generalized renormalization $g_n$ restricted on $C^n \cup I^n \cup J^n \cup D^n$ is always a symmetric box mapping with combinatorial type $(r_n, t_n)$. Hence the combinatorial sequence $\mathcal S = \{(\theta_n, r_n, t_n)\}_{n \geq 1}$ is well-defined. The next lemma implies that type $\mathcal D$ should be neglected.

\begin{lem}
Suppose $f \in \mathscr G$ with combinatorial sequence $\mathcal S = \{(\theta_n, r_n, t_n)\}_{n \geq 1}$. Then $\theta_n \notin \{ \mathcal D^{ij}\}$.
\end{lem}

\begin{proof}
Argue by contradiction.  Assume that $g_k$ is of type $\mathcal D$ for some $k \geq 1$, $g_k : I^k \cup C^k \to I^{k-1}$. Assume that $g_k|I^k = f^{S_k}$ and $g_k|C^k = f^{\hat S_k}$. Since $I^k$ is the first return domain, $f^i(I^k) \cap (I^{k-1} \cup J^{k-1}) = \emptyset $ for $1 \leq i \leq S_k-1$. Similarly $f^i(C^k) \cap (I^{k-1} \cup J^{k-1}) = \emptyset $ for $1 \leq i \leq \hat S_k-1$. By Definition 1.1 property (5), $\omega(c) \cap (I^{k-1} \cup J^{k-1}) \subset I^k \cup C^k \cup J^k \cup D^k$. Since $g_k(c) \in C^k$, we can show that $O_k : =[\cup_{i=0}^{S_k-1} f^i(\overline {I^k})] \cup [\cup_{i=0}^{\hat S_k-1} f^i(\overline {C^k})]$ is an cover of $\omega(c)$. But $O_k \cap J^{k-1} = \emptyset$. This implies $d \notin \omega(c)$, a contradiction.
\end{proof}

\noindent
{\bf Admissible condition.} Given any sequence of triples $\mathcal S = \{(\theta_n, r_n, t_n)\}_{n \geq 1}$ with $\theta_n \in \mathcal T$, $r_n \geq 2$, $t_n \geq 1$ for all $n \geq 1$. Then $\mathcal S$ satisfies the Admissible condition if the following inductive conditions holds: for all $n \geq 1$,
\begin{itemize}
\item[(1)] $\theta_1 = \mathcal A^{++} \ \mbox{or} \ \mathcal C^{-+}$ with $t_1 \prec r_1$.
\item[(2)] if $\theta_n \in \mathcal A^+$ with $t_n \prec r_n$, then $e(r_n, t_n) \neq (1,1)$.
\begin{itemize}
\item if $\theta_n = \mathcal A^{++}$ with $e(r_n, t_n) = (-1, -1)$, then $\theta_{n+1} = \mathcal A^{++}$ with $t_{n+1} \prec r_{n+1}$;
\item if $\theta_n = \mathcal A^{+-}$ with $e(r_n, t_n) = (-1, -1)$, then $\theta_{n+1} = \mathcal A^{+-}$ with $t_{n+1} \prec r_{n+1}$;
\item if $\theta_n = \mathcal A^{++}$ with $e(r_n, t_n) = (1, -1)$, then $\theta_{n+1} = \mathcal B^{-+}$ with $t_{n+1} < r_{n+1}$;
\item if $\theta_n = \mathcal A^{+-}$ with $e(r_n, t_n) = (1, -1)$, then $\theta_{n+1} = \mathcal B^{--}$ with $t_{n+1} < r_{n+1}$.
\end{itemize}
\item[(3)] if $\theta_n \in \mathcal A^-$ with $t_n < r_n$, then $e(r_n, t_n) \neq (1, 1)$.
\begin{itemize}
\item if $\theta_n = \mathcal A^{-+}$ with $e(r_n, t_n) = (-1, -1)$, then $\theta_{n+1} = \mathcal A^{++}$ with $t_{n+1} \prec r_{n+1}$;
\item if $\theta_n = \mathcal A^{--}$ with $e(r_n, t_n) = (-1, -1)$, then $\theta_{n+1} = \mathcal A^{+-}$ with $t_{n+1} \prec r_{n+1}$;
\item if $\theta_n = \mathcal A^{-+}$ with $e(r_n, t_n) = (-1, 1)$, then $\theta_{n+1} = \mathcal C^{-+}$ with $t_{n+1} \prec r_{n+1}$;
\item if $\theta_n = \mathcal A^{--}$ with $e(r_n, t_n) = (-1, 1)$, then $\theta_{n+1} = \mathcal C^{--}$ with $t_{n+1} \prec r_{n+1}$;
\item if $\theta_n = \mathcal A^{-+}$ with $e(r_n, t_n) = (1, -1)$, then $\theta_{n+1} = \mathcal B^{+-}$ with $t_{n+1} \prec r_{n+1}$;
\item if $\theta_n = \mathcal A^{--}$ with $e(r_n, t_n) = (1, -1)$, then $\theta_{n+1} = \mathcal B^{++}$ with $t_{n+1} \prec r_{n+1}$.
\end{itemize}
\item [(4)] if $\theta_n \in \mathcal B^+$ with $t_n \prec r_n$, then $e(r_n, t_n) \neq (-1, -1)$.
\begin{itemize}
\item if $\theta_n = \mathcal B^{++}$ with $e(r_n, t_n) = (1, -1)$, then $\theta_{n+1} = \mathcal C^{++}$ with $t_{n+1} < r_{n+1}$;
\item if $\theta_n = \mathcal B^{+-}$ with $e(r_n, t_n) = (1, -1)$, then $\theta_{n+1} = \mathcal C^{+-}$ with $t_{n+1} < r_{n+1}$;
\item if $\theta_n = \mathcal B^{++}$ with $e(r_n, t_n) = (1, 1)$, then $\theta_{n+1} = \mathcal A^{++}$ with $t_{n+1} \prec r_{n+1}$;
\item if $\theta_n = \mathcal B^{+-}$ with $e(r_n, t_n) = (1, 1)$, then $\theta_{n+1} = \mathcal A^{+-}$ with $t_{n+1} \prec r_{n+1}$.
\end{itemize}
\item[(5)] if $\theta_n \in \mathcal B^-$ with $t_n < r_n$, then $e(r_n, t_n) \neq (-1, -1)$.
\begin{itemize}
\item if $\theta_n = \mathcal B^{-+}$ with $e(r_n, t_n) = (-1, 1)$, then $\theta_{n+1} = \mathcal B^{++}$ with $t_{n+1} \prec r_{n+1}$;
\item if $\theta_n = \mathcal B^{--}$ with $e(r_n, t_n) = (-1, 1)$, then $\theta_{n+1} = \mathcal B^{+-}$ with $t_{n+1} \prec r_{n+1}$;
\item if $\theta_n = \mathcal B^{-+}$ with $e(r_n, t_n) = (1, -1)$, then $\theta_{n+1} = \mathcal C^{--}$ with $t_{n+1} \prec r_{n+1}$;
\item if $\theta_n = \mathcal B^{--}$ with $e(r_n, t_n) = (1, -1)$, then $\theta_{n+1} = \mathcal C^{-+}$ with $t_{n+1} \prec r_{n+1}$;
\item if $\theta_n = \mathcal B^{-+}$ with $e(r_n, t_n) = (1, 1)$, then $\theta_{n+1} = \mathcal A^{+-}$ with $t_{n+1} \prec r_{n+1}$;
\item if $\theta_n = \mathcal B^{--}$ with $e(r_n, t_n) = (1, 1)$, then $\theta_{n+1} = \mathcal A^{++}$ with $t_{n+1} \prec r_{n+1}$.
\end{itemize}
\item[(6)] if $\theta_n \in \mathcal C^+$ with $t_n < r_n$.
\begin{itemize}
\item if $\theta_n =\mathcal C^{++}$, then $\theta_n = \mathcal A^{++}$ with $t_{n+1} \prec r_{n+1}$;
\item if $\theta_n =\mathcal C^{+-}$, then $\theta_n = \mathcal A^{+-}$ with $t_{n+1} \prec r_{n+1}$
\end{itemize}
\item[(7)] if $\theta_n \in \mathcal C^-$ with $t_n \prec r_n$.
\begin{itemize}
\item if $\theta_n =\mathcal C^{-+}$ with $e(r_n, t_n) = (-1, -1)$, then $\theta_{n+1} = \mathcal A^{++}$ with $t_{n+1} \prec r_{n+1}$;
\item if $\theta_n =\mathcal C^{--}$ with $e(r_n, t_n) = (-1, -1)$, then $\theta_{n+1} = \mathcal A^{+-}$ with $t_{n+1} \prec r_{n+1}$
\item if $\theta_n =\mathcal C^{-+}$ with $e(r_n, t_n) = (1, -1)$, then $\theta_{n+1} = \mathcal A^{--}$ with $t_{n+1} < r_{n+1}$;
\item if $\theta_n =\mathcal C^{--}$ with $e(r_n, t_n) = (1, -1)$, then $\theta_{n+1} = \mathcal A^{-+}$ with $t_{n+1} < r_{n+1}$;
\item if $\theta_n =\mathcal C^{-+}$ with $e(r_n, t_n) = (1, 1)$, then $\theta_{n+1} = \mathcal A^{+-}$ with $t_{n+1} \prec r_{n+1}$;
\item if $\theta_n =\mathcal C^{--}$ with $e(r_n, t_n) = (1, 1)$, then $\theta_{n+1} = \mathcal A^{++}$ with $t_{n+1} \prec r_{n+1}$;
\end{itemize}
\end{itemize}

\begin{prop}
$\mathcal S$ is admissible if and only if $\mathcal S$ satisfies the Admissible condition.
\end{prop}

\begin{proof}
\

\noindent
{\bf Necessity.}
 Suppose $f \in \mathscr G$ has combinatorial sequence $\mathcal S(f) = \{ (\theta_n, r_n, t_n)\}_{n \geq 1}$. By induction, the $n$-th generalized renormalization $g_n$ is induced from $g_1$ by $n-1$ inducing step, i.e. $g_n = \mathcal I^{n-1} g_1$. By corollary 2.8 and Lemma 2.9, $\mathcal I^{n-1} g_1$ has combinatorial type $(r_n, t_n)$ and is of type $\mathcal A, \mathcal B, \mathcal C$. By Lemma 2.7, $g_1$ is of type $\mathcal A^{++}$ or $\mathcal C^{-+}$. Then by Lemma 2.1 and Lemma 2.2, in either case, we have $t_1 \prec r_1$. Now the necessity follows from Lemma 2.1-2.5 by induction.

\noindent
{\bf Sufficiency.} Let $\mathcal S = \{ (\theta_n, r_n, t_n)\}_{n \geq 1}$ be any sequence satisfying the Admissible condition. It suffices to construct a continuous bimodal map with the properties (2)-(7) in Definition 1.1. In fact, since $P_{ab}^+$ and $P_{ab}^-$ are full families, the existence of the corresponding map of the form follows. See \cite{MS}[Section II. 4]

Without loss of generality, we may assume that $\theta_1 = \mathcal A^{++}$ since the case $\theta_1 = \mathcal C^{-+}$ follows the same. To construct such a continuous bimodal map, we use a standard argument in interval dynamics. The proof is similar with \cite[Theorem 1]{JL}, so we mainly state the strategy here.

We shall first construct inductively a sequence of bimodal maps $f_n: I\to I$, $n=0,1,\ldots$, with the same turning points $c$ and $d$, such that the intervals $I^i$, $C^i$, $J^i$ and $D^i$ are well-defined for $1 \leq i \leq n+1$ and such that the properties (2)-(4) and (6)-(7) hold for $f=f_n$ and $k\le n$. These objects depend of course on $f_n$, and when we want to emphasize the map $f_n$, we write $I^i_{f_n}, C^i_{f_n}, g_n^{f_n}$, etc.

For the starting step, take $f_0$ be an arbitrary positive bimodal map with turning point $c,d\in (0,1)$ which has exactly one fixed point p between c and d. Moreover, $f_0$ satisfies
\[ f_0(d) < p_1 < f_0^2(d) < c =f_0^3(c) < p < d = f_0^3(d) < f_0^2(c) < p_2 < f_0(c),
\]
where $p_1, p_2 \in f_0^{-1}(p)$. This can be done by the classic kneading theory since the turning points are periodic. It is straightforward to check that for this map $f_0$, the return time of $c$ to $J^0_{f_0} = (p, p_2)$ is equal to 2 and the return time of $f_0^2 (d)$ to $J^0_{f_0}$ is 1. So $I^1_{f_0}$ and $C^1_{f_0}$ are well-defined and disjoint. Moreover, the map $f_0^2$ is local maximal at $c$, thus $f_0^2(I^1_{f_0}) \supset J^1_{f_0} \ni d$. Properties (6) and (7) are null in this case.

For the induction step, assume that $f_n$ is defined such that the properties (2)-(4) and (6)-(7) hold for $f=f_n$ and $k \leq n$, we shall modify $f_n$ near the turning points $c$ and $d$. To be precise, let us assume that the first return map $g_{n+1}^{f_n}$ restricted on $I^{n+1} \cup C^{n+1} \cup J^{n+1} \cup D^{n+1}$ is of type $\mathcal A^{++}$ and $t_{n+1} < r_{n+1}$ both are odd. Then $g_{n+1}^{f_n} : D^{n+1} \to I^n$ and $g_{n+1}^{f_n} : C^{n+1} \to J^n$ are monotone and onto. As in the proof of Lemma 2.2, let 
\[ R_1 = (g_{n+1}^{f_n}|C^{n+1} \cup D^{n+1})^{-(r_{n+1}-1)}(J^{n+1}) \cap D^{n+1},
\]
\[
R_2 = (g_{n+1}^{f_n}|C^{n+1} \cup D^{n+1})^{-(t_{n+1}-1)}(J^{n+1}) \cap D^{n+1}.
\]
Then $R_1 \cap R_2 = \emptyset$ and $R_1 \cup R_2 \subset D^{n+1}$. Since $t_{n+1} < r_{n+1}$, $R_2 < R_1$. Now modify $f_n$ inside $I^{n+1}$ so that $g_{n+1}^{f_n}(c) \in R_1$. Then $g_{n+1}^{f_n}(I^{n+1}) \supset R_2$. Let $C^{n+2}$ be the component of $(g_{n+1}^{f_n})^{-1}(R_2)$ which is on the left side of $c$. Modify $f_n$ inside $J^{n+1}$ to get $D^{n+2}$. Since $D^{n+2} \subset J^{n+1}$, it has a preimage $R' \subset R_1$ under $(g_{n+1}^{f_n})^{r_{n+1}-1}$. Then modify $f_n$ deep inside $I^{n+1}$ so that $g_{n+1}^{f_n}(c) \in R'$. The preimage of $R_1$ under $g_{n+1}^{f_n}|I^{n+1}$ (after modification) will be denoted $I^{n+2}$. Modify $f_n$ deep inside $J^{n+1}$ to get $J^{n+2}$. With this modification we can choose $f_{n+1}$ satisfying the induction assumption. Observe that $f_{n+1} = f_n$ outside $I^{n+1} \cup J^{n+1}$ for all $n$.

So these bimodal maps have been constructed. Moreover, from the construction we can choose $f_n$ so that $\{ f_n\}$ is equi-continuous and $|I^n|, |J^n| \to_n 0$. Thus $f_n$ converges to a continuous bimodal map $f$. Since $\theta_n \notin \{ \mathcal D^{ij}\}$ for all $n \geq 1$, we can find a sequence $\{ k_n\}_{n \geq 1}$ so that $f_n^{k_n}(c)$ enters $J^{n}$. By continuity, $f^{k_n}(c)$ enters the closure of $J^{n}$. Since $|J^n| \to 0$, $d \in \omega(c)$. Similarly, $c \in \omega(d)$. Hence $c$ and $d$ are recurrent with $\omega(c) = \omega(d)$. Moreover, $f^i(c)$ and $f^j(d)$ are disjoint from the boundary of $I^n$ and $J^n$ for all $n \geq 0$. By continuity, $f$ satisfies properties (2)-(4) and (6)-(7). Finally, we can prove property (5) by induction. This finishes the proof.

\end{proof}

Let $S_1 =2$, $\hat S_1=1$ and for each $n \geq 1$, define inductively
\[ S_{n+1} = S_n + (r_n-1) \hat S_n \mbox{ and } \hat S_{n+1} = S_n + (t_n-1) \hat S_n.
\]
The return times of $c$ and $d$ to $I^{n-1} \cup J^{n-1}$ are equal to $S_{n}$, while the return times of $g_n(c)$ and $g_n(d)$ to $I^{n-1} \cup J^{n-1}$ are equal to $\hat S_n$.

\begin{lem}
Suppose $f \in \mathscr G$, then $\omega(c) = \omega(d)$ is an invariant minimal Cantor set.
\end{lem}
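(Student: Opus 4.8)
The plan is to establish three things for the set $\Omega := \omega(c) = \omega(d)$: that it is $f$-invariant, that it is a Cantor set (compact, perfect, totally disconnected), and that it is minimal. Invariance is immediate since $\omega$-limit sets of any point are always closed and forward invariant, and here $f$ is continuous; moreover $\Omega$ is nonempty and compact because $I$ is compact. So the real content is perfectness, total disconnectedness, and minimality.

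First I would prove minimality. Since $c$ and $d$ are recurrent with $\omega(c)=\omega(d)=\Omega$, for any $x \in \Omega$ we have $\omega(x) \subset \Omega$; it suffices to show $\Omega \subset \omega(x)$ for every $x\in\Omega$, equivalently that every point of $\Omega$ has dense forward orbit in $\Omega$. The natural device is the twin principal nest together with property (4) of Definition 1.2: for each $n$, $\Omega \cap (I^{n-1}\cup J^{n-1}) \subset I^n \cup J^n \cup C^n \cup D^n$, and property (3) gives $I^n \cup J^n \subset g_n(I^n\cup J^n)$. Combining this with property (6) (the branches on $C^n, D^n$ return into $I^{n-1}\cup J^{n-1}$ under iteration of $g_{n-1}$, hence of $f$), one sees that the first return map $g_n$ acts transitively enough on the pieces $I^n, J^n, C^n, D^n$ covering $\Omega$; since $|I^n|,|J^n|\to 0$ (and one checks $|C^n|,|D^n|\to 0$ as well, because they map onto $I^{n-1}$ or $J^{n-1}$ with bounded distortion off the critical points, or simply because they are nested inside shrinking return domains), the forward orbit of any $x\in\Omega$ enters every neighborhood of every point of $\Omega$. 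More precisely I would fix $y\in\Omega$ and $\varepsilon>0$, choose $n$ with all four pieces of generation $n$ of diameter $<\varepsilon$, locate the generation-$n$ piece $P$ containing $y$, and show that from any $x\in\Omega$ some forward image lands in $P$: this follows because $g_n$ (an iterate of $f$) maps $\Omega\cap(I^{n-1}\cup J^{n-1})$ onto (a set meeting) each such piece, using that $d\in\omega(c)$ and $c\in\omega(d)$ force the orbit to pass between the $I$-side and the $J$-side infinitely often, and using that types $\mathcal D$ are excluded (Lemma 2.7) so the dynamics genuinely mixes the two sides.

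Given minimality, perfectness and total disconnectedness follow quickly. A minimal set is perfect unless it is a single periodic orbit; but $\Omega = \omega(c)$ contains the critical point $c$, which is non-periodic (it is recurrent but $f\in\mathscr G\subset\mathscr B_*$ has non-periodic turning points — indeed if $c$ were periodic its orbit would be a periodic attractor or, being a critical point, would make the map renormalizable in a way incompatible with the infinite nest of distinct return domains), so $\Omega$ is infinite, hence perfect. For total disconnectedness: if $\Omega$ contained a nondegenerate interval $T$, then by minimality $\Omega=\overline{\bigcup_{k\ge 0} f^k(T)}$ would contain a neighborhood of $\Omega$; but the complement of $\Omega$ is dense because, for instance, the boundary points $p_1,p,p_2$ and their preimages, or the gaps between the return domains $I^n, J^n, C^n, D^n$ inside $I^{n-1}\cup J^{n-1}$, are disjoint from $\Omega$ by property (4) (points in those gaps have first-return image outside $I^{n-1}\cup J^{n-1}\supset\Omega$ eventually), giving points arbitrarily close to any point of $\Omega$ that are not in $\Omega$; alternatively, $f$ has no wandering intervals and no periodic attractors, so it cannot have an interval on which all iterates stay in a fixed minimal set — that interval would be wandering or periodic. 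Thus $\Omega$ has empty interior, and being also closed with no isolated points, it is a Cantor set.

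The main obstacle I expect is the minimality argument: making rigorous that the forward orbit of an arbitrary $x\in\Omega$ visits every generation-$n$ piece. The shrinking of the pieces and property (4) handle the "small neighborhoods" part, but one must genuinely use the combinatorial structure — that $\omega(c)=\omega(d)$ and that type $\mathcal D$ never occurs — to guarantee the orbit does not get trapped on one side or in a proper closed invariant subset. I would organize this by showing that the first return map $g_n$ restricted to $\Omega\cap(I^{n-1}\cup J^{n-1})$ has the property that the $g_n$-orbit of any point is dense among the four pieces $I^n, J^n, C^n, D^n$ — this is essentially a finite-state transitivity statement that can be read off the type transition Lemmas 2.4–2.5 and the relations $g_n|(I^n\cup J^n)=g_{n-1}^{r_n}|\cdots$, $g_n|(C^n\cup D^n)=g_{n-1}^{t_n}|\cdots$ — and then pass to the limit in $n$.
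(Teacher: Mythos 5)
Your reduction of perfectness and total disconnectedness to minimality is fine, and invariance is trivial; but the minimality argument --- the only substantive part of the lemma --- has a genuine gap, located exactly where you place your ``main obstacle''. You treat $I^n, J^n, C^n, D^n$ as ``pieces covering $\Omega$'' and propose to ``locate the generation-$n$ piece $P$ containing $y$''. These four pieces do not cover $\Omega$: property (4) only controls $\Omega\cap(I^{n-1}\cup J^{n-1})$, and by Definition 1.2(1) the point $f(c)\in\Omega$ already lies outside $I^0\cup J^0$. For the same reason, nothing in your argument shows that the forward orbit of an arbitrary $x\in\Omega$ ever enters $I^{n-1}\cup J^{n-1}$ at all, which is the unstated starting point of your transitivity discussion; and the appeal to ``$d\in\omega(c)$ and $c\in\omega(d)$'' to make the orbit of $x$ oscillate between the two sides is a non sequitur, since those statements concern the orbits of $c$ and $d$, not of $x$.

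The missing idea --- and the one the paper's proof is built on --- is to cover $\omega(c)$ by the forward images of the return domains: $\Lambda_k=\bigl[\bigcup_{i=0}^{S_k-1}f^i(\overline{I^k}\cup\overline{J^k})\bigr]\cup\bigl[\bigcup_{i=0}^{\hat S_k-1}f^i(\overline{C^k}\cup\overline{D^k})\bigr]$, with $\omega(c)\subset\Lambda_k$ proved by induction from property (4). With this in hand, any $x\in\Omega$ lies in $f^i(\overline{P})$ for some level-$k$ return domain $P$ and some $i$ smaller than the corresponding return time, so $f^{S_k-i}(x)$ (or $f^{\hat S_k-i}(x)$) lands in $\overline{I^{k-1}}\cup\overline{J^{k-1}}$; hence the orbit of $x$ meets $\overline{I^{k-1}}\cup\overline{J^{k-1}}$ for every $k$, and by nestedness it meets $\overline{I^k}$ for all $k$ or $\overline{J^k}$ for all $k$, giving $c\in\omega(x)$ or $d\in\omega(x)$ and therefore $\Omega=\omega(c)=\omega(d)\subset\omega(x)$. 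Note that this bypasses the ``finite-state transitivity'' among the four pieces that you correctly flag as delicate: reaching one critical point suffices precisely because $\omega(c)=\omega(d)$. You still need $|I^k|,|J^k|\to 0$, i.e.\ absence of wandering intervals, which both you and the paper invoke; and the intersection $\bigcap_k\Lambda_k$ with shrinking components is also what gives total disconnectedness cleanly, rather than the somewhat roundabout wandering-interval dichotomy you sketch.
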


\begin{proof}

For $k \geq 1$, consider the union of compact intervals in the following form:
\[ \Lambda_k : = [\cup_{i=0}^{S_k-1} f^i (\overline{I^k} \cup \overline{J^k})] \cup  [\cup_{i=0}^{\hat S_k-1} f^i (\overline{C^k} \cup \overline{D^k})].
\]
We can prove by induction that $\omega(c) \subset \Lambda_k$ for all $k \geq 1$. Let $\Lambda: = \cap_{k=1}^{\infty} \Lambda_k$. Then $\Lambda$ is a Cantor set. The minimality of $\Lambda$ follows from the fact that $f$ has no wandering intervals. Since $\omega(c) \subset \Lambda$, they must be coincident. 
\end{proof}

We shall provide several examples. We say that $f \in \mathscr G$ has stationary combinatorics $(r, t)$ provided $r_n = r$ and $t_n = t$ for fixed $r \geq 2, t \geq 1$ and for all $n$. This is the simplest combinatorics which corresponds to the fixed point of the generalized renormalization operator. The Admissible condition implies the fact: if $f$ has stationary combinatorics $(r, t)$, then $r$ and $t$ must be one of the following case:
\begin{itemize}
\item[(1)] $r$ is even and $t$ is odd with $t < r$;
\item[(2)] $r$ and $t$ both are odd with $t < r$.
\end{itemize}
For example $(5, 3)$ and $(4, 3)$ are admissible stationary combinatorics but $(3, 2)$ and $(3, 4)$ are not. Moreover, Lemma 2.4 and Lemma 2.5 show that
\begin{itemize}
\item[(3)] if $r$ is even and $t$ is odd, then the generalized renormalization sequence $\{g_n\}$ exhibits as
\[
\mathcal A^{++} \mathcal B^{-+} \mathcal C^{--} \mathcal A^{-+} \mathcal B^{+-} \mathcal C^{+-} \mathcal A^{+-} \mathcal B^{--} \mathcal C^{-+} \mathcal A^{--} \mathcal B^{++} \mathcal C^{++} \mathcal A^{++} \ldots
\]
or 
\[
\mathcal C^{-+} \mathcal A^{--} \mathcal B^{++} \mathcal C^{++} \mathcal A^{++} \mathcal B^{-+} \mathcal C^{--} \mathcal A^{-+} \mathcal B^{+-} \mathcal C^{+-} \mathcal A^{+-} \mathcal B^{--} \mathcal C^{-+} \ldots
\]
depending on $f$ is positive or negative.
\item[(4)] if $r$ and $t$ both are odd, then the generalized renormalization sequence $\{g_n\}$ exhibits as
\[
\mathcal A^{++} \mathcal A^{++} \mathcal A^{++} \mathcal A^{++} \ldots
\]
or
\[
\mathcal C^{-+} \mathcal A^{++} \mathcal A^{++} \mathcal A^{++} \ldots
\]
depending on $f$ is positive or negative.
\end{itemize}
The Fibonacci bimodal map studied in \cite{V} corresponds to stationary combinatorics $(2,1)$. In this case $\hat S_n = S_{n-1}$ and hence $S_{n+1} = S_n + S_{n-1}$. Therefore the first return time of $c$ and $d$ to $I^{n-1} \cup J^{n-1}$ coincide with the Fibonacci sequence $2, 3, 5, \ldots $.

\section{Yoccoz puzzle}

Let $f = a_d z^d + a_{d-1}z^{d-1}+ \ldots + z_0$ be a polynomial of degree $d \geq 2 $ with $a_d \neq 0$. Then $f$ has a superattracting fixed point at $\infty$. The basin of attraction of $\infty$, which is defined as 
\[ A(\infty): = \{ z \in \mathbb C| f^n(z) \to \infty \mbox{ as } n \to \infty \}
\]
is connected by the maximum principle. The filled Julia set $K(f)$ of $f$ is defined as the complement of $A(\infty)$. The Julia set $J(f)$ of $f$ is defined as $\partial K(f)$.

The classic B$\ddot{\rm o}$ttcher Theorem provides us a local holomorphic change of coordinate $\varphi(z)$ which conformally conjugate $f$ near $\infty$ to $z \to z^d$. The map $\varphi$ is tangent to the identity at $\infty$ and is unique up to multiplication. If the filled Julia set $K(f)$ contains all of the finite critical points of $f$, then both $K(f)$ and $J(f)$ are connected. The B$\ddot{\rm o}$ttcher function $\varphi(z)$ extends analytically to the whole basin of $\infty$, $\mathbb C \setminus K(f)$, and maps it conformally onto $\mathbb C \setminus \overline {\mathbb D}$ where $\mathbb D$ is the unit disk.

The Green function $G := \log |\varphi|$ extends harmonically to $\mathbb C \setminus K(f)$ and vanishes on $K(f)$. The level curve $\{ G(z) = r\}$, $r > 0$ is called the {\it equipotential curve of level r}, and denoted by $E(r)$. The {\it external ray of angle} $t \in \mathbb R / \mathbb Z$ is the gradient curve of $G$ stemming from infinity with the angle $t$ (measured via the B$\ddot{\rm o}$ttcher coordinate $\varphi$), and denoted by $R(t)$. Note the equation $G(f(z)) = G(z)d$ shows that $f$ maps each equipotential curve $E(r)$ to the equipotential curve $E(rd)$ by an $d$-to-1 covering map. When $J(f)$ is connected, $R(t) = \varphi^{-1}(\{ r e^{2\pi i t}| r >1\}) $. In this case, any external ray $R(t)$ with $t$ rational has a well-defined landing point $\lim_{r \to 1} \varphi^{-1}(r e^{2 \pi i t})$ which is contained in $J(f)$; vice versa, a repelling or parabolic point is the common landing point of finitely many external rays with rational angle.

A topological disk $D \subset \mathbb C$ is called nice if $int D \cap f^k(\partial D) = \emptyset, k=0,1,2,\ldots$. A set $X \subset \mathbb{C}$ is called $\mathbb{R}$-{symmetric} if it is invariant under complex conjugacy $z \to \overline{z}$. A holomorphic map $h : U \to \mathbb C$ is called $\mathbb{R}$-{symmetric} if $U$ is $\mathbb{R}$-symmetric and $h(\overline z) = \overline h(z)$. Note that $h$ preserves the real line. When $f$ is a real polynomial, the corresponding B$\ddot{\rm o}$ttcher coordinate $\varphi(z)$ and the Green function $G(z)$ are $\mathbb{R}$-{symmetric}.

Let $f \in \mathscr C$. Since the orbits of $c$ and $d$ are bounded, $J(f)$ is connected (and also locally connected).  Since $f$ is a real polynomial, if $R(t)$ lands at $z \in J(f)$, then its mirror image $R(-t)$ lands at $\overline z \in J(f)$. By Lemma 2.6, there exists a fixed point $p$ between $c$ and $d$ in the real line. By Singer's theorem \cite{MS} $p$ is hyperbolic repelling. In particular, $p$ is contained in the interior of $K(f) \cap \mathbb R$. Then by Lemma 5.2 in \cite{KSS}, there are exactly two external rays landing at $p$.

Now let us play the Yoccoz puzzle game for $f$. The game starts by cutting the complex plane with external rays. Suppose that $\mathcal R_1 = R(\theta)$ and $\mathcal R_2 = R(-\theta)$ landing at $p$. Select some equipotential curve surrounding the critical values $\{f(c), f(d)\}$, for example the curve $E(1/3)$.  Let $X^{(0)}$ be the open topological disk bounded by $E(1/3)$. The Yoccoz puzzle of $f$ is defined as the following sequence of graphs:
\[Y^{(0)} = \partial X^{(0)} \cup (X^{(0)} \cap \bigcup_{i=1,2} \overline{\mathcal R_i}),
\]
\[Y^{(n)} = f^{-n} Y^{(0)}, \  n =1,2,\ldots.
\]
A component of $f^{-n}X^{(0)} \setminus Y^{(n)}$ will be called {\it a puzzle piece of depth n}. A puzzle piece of depth $n$ which contains a point $z$ will be denoted by $P^{(n)}(z)$. For $n \geq 1$, the $f$-image of a puzzle piece of depth $n$ is a puzzle piece of depth $n-1$ and each puzzle piece of depth $n$ is contained in a puzzle piece of depth $n-1$.

Since $J(f)$ is connected and the orbits of $c$ and $d$ do not land at $p$, all puzzles $Y^{(n)}$ are well-defined. Since bounded by equipotentials and external rays, any puzzle piece $P$ is nice. Moreover, any two puzzle pieces $P$ and $Q$ are either nested or have disjoint interiors.

Consider a puzzle piece $P$ of depth $n \geq 1$. One can also define the first entry map and first return map to $P$. Let $Q$ be entry domain of $P$ and $f^m : Q \to P$ be a branch of the first entry map, then $Q$ is a puzzle piece of depth $n+m$.

We now introduce the {\it twin principal nest} of cubic polynomial $f \in \mathscr C$. Let $U^0$ and $V^0$ be puzzle pieces of depth $1$ so that $c \in U^0$ and $d \in V^0$.

\begin{lem}
$U^0$ and $V^0$ are $\mathbb R$-symmetric topological disks such that $U^0 \cap \mathbb R = I^0$ and $V^0 \cap \mathbb R = J^0$. 
\end{lem}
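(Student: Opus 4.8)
The plan is to identify $U^0$ and $V^0$ concretely in terms of the two external rays $\mathcal R_1 = R(\theta)$, $\mathcal R_2 = R(-\theta)$ landing at $p$ and their $f$-preimages, and then to read off the required properties from the $\mathbb R$-symmetry of all the data involved. First I would recall that, by construction, $Y^{(0)}$ consists of the equipotential $E(1/3)$ together with the two external rays $\overline{\mathcal R_1}, \overline{\mathcal R_2}$ inside $X^{(0)}$; since $f$ is a real polynomial, its B\"ottcher coordinate and Green function are $\mathbb R$-symmetric, so $E(1/3)$ is $\mathbb R$-symmetric and the ray $R(-\theta)$ is exactly the complex conjugate of $R(\theta)$. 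Hence $Y^{(0)}$ is $\mathbb R$-symmetric, and because $f(\overline z) = \overline{f(z)}$ the preimage graph $Y^{(1)} = f^{-1}Y^{(0)}$ is $\mathbb R$-symmetric as well. Consequently the partition of $X^{(1)} := f^{-1}X^{(0)}$ into depth-$1$ puzzle pieces is invariant under $z \mapsto \overline z$: each piece is either $\mathbb R$-symmetric or is swapped with another piece. Since $c, d \in \mathbb R$ and $c, d$ are not on any puzzle boundary (the orbits of the turning points avoid $p$ and the equipotentials), the pieces $U^0 \ni c$ and $V^0 \ni d$ are fixed by conjugation, hence $\mathbb R$-symmetric topological disks.

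Next I would pin down $U^0 \cap \mathbb R$ and $V^0 \cap \mathbb R$. The set $Y^{(1)} \cap \mathbb R$ consists of the points of $E(1/3) \cap \mathbb R$ (the two endpoints of the real interval bounded by the equipotential) together with $f^{-1}(\{\,$landing points of $\mathcal R_1 \cup \mathcal R_2$ on $\mathbb R\,\}) = f^{-1}(p) \cap \mathbb R$, because an external ray meets $\mathbb R$ only at its landing point when that landing point is real (here $p$ and its real preimages). In the positive case $f \in \mathscr B^+$, $f^{-1}(p) = \{p_1, p, p_2\}$ with $p_1 < p < p_2$, and $I^0 = (p_1, p)$, $J^0 = (p, p_2)$; the point $c$ lies in $(p_1, p)$ and $d$ in $(p, p_2)$, so the component of $X^{(1)} \cap \mathbb R$ containing $c$ is exactly $(p_1, p) = I^0$ and the one containing $d$ is $(p, p_2) = J^0$. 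For $f \in \mathscr B^-$ one runs through the three sub-cases from the definition of $I^0, J^0$ in Section~1; in each case $I^0$ and $J^0$ were defined precisely as the components of $I \setminus (f^{-1}(p) \cup \{\text{endpoints}\})$ containing $c$ and $d$ respectively, so the same identification holds. Since a puzzle piece is a topological disk whose intersection with $\mathbb R$ is connected (being $\mathbb R$-symmetric and bounded by arcs of $E(1/3)$ and external rays), $U^0 \cap \mathbb R$ is a single interval, and it must be the component of $X^{(1)} \cap \mathbb R$ through $c$, i.e. $I^0$; likewise $V^0 \cap \mathbb R = J^0$.

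The one point that needs a little care — and which I expect to be the main obstacle — is verifying that $U^0 \cap \mathbb R$ is genuinely an \emph{interval} (not a disconnected set) and that its endpoints are exactly $p_1$ and $p$ (resp. $p$ and $p_2$), i.e. that no stray external ray of $Y^{(1)}$ or stray component of $E(1/3) \cap \mathbb R$ cuts $I^0$. This follows from the fact that the only external rays in $Y^{(1)}$ are the two preimage rays landing on $f^{-1}(p)$, and on the real line these land only at the real preimages of $p$; combined with $E(1/3) \cap \mathbb R$ lying entirely outside $[\,p_1, p_2\,]$ (the equipotential of level $1/3$ encloses the whole real trace of $K(f)$, in particular $I^0 \cup J^0$), we get that the connected component of $X^{(1)} \cap \mathbb R$ through $c$ has its two endpoints among $f^{-1}(p) \cap \mathbb R$ and contains no other such point, forcing it to equal $I^0$. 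An alternative, cleaner route is to quote directly that for real polynomials the real trace of a puzzle piece containing a real point is the component of the corresponding real first-return/entry domain (this is standard, cf. the Yoccoz puzzle discussion), which immediately gives $U^0 \cap \mathbb R = \mathcal L_c(I^0 \cup J^0)$-type identification reducing to $I^0$ at depth $1$. Either way the argument is short; the substantive content is the $\mathbb R$-symmetry bookkeeping of the first paragraph.
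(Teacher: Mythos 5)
Your proposal is correct and follows essentially the same route as the paper: both arguments rest on the $\mathbb R$-symmetry of the B\"ottcher coordinate, the conjugate pairs of external rays landing at $p$ and at $p_1$ (resp.\ $p_2$), and reading off the real trace of the depth-one piece from the real points of the graph $Y^{(1)}$. The paper is merely more explicit in naming the four rays and the arc of $f^{-1}E(1/3)$ that bound $U^0$, whereas you derive the same facts from the conjugation-invariance of $Y^{(1)}$ as a whole; the only cosmetic slip is writing $E(1/3)$ where the depth-one equipotential $f^{-1}E(1/3)$ is meant.
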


\begin{proof}
It suffices to prove for $U^0$. Since $\mathcal R_1$ and $\mathcal R_2$ landing at $p$, there exist $\mathcal R_1'=R(\theta')$ and $\mathcal R_2'=R(-\theta')$ landing at $p_1$, where $f(p_1) = p$ and $I^0=(p_1, p)$. They are clearly symmetric with respect to the real line. In fact, $U^0$ is the region bounded by these four external rays and the curve $f^{-1} E(1/3)$, hence is $\mathbb R$-symmetric and $U^0 \cap \mathbb R = I^0$.
\end{proof}

Since $c$ and $d$ are recurrent, define 
\[ 
U^0 \supset U^1 \supset U^2 \supset \ldots \supset \{ c\} \mbox{ and } V^0 \supset V^1 \supset V^2 \supset \ldots \supset \{ d\}
\]
inductively such that, for $n \geq 1$, $U^n$ and $V^n$ are first return domains to $U^{n-1} \cup V^{n-1}$. Then $\{ U^n \}_{ n \geq 0}$ is a sequence of nested puzzle pieces with depth $1 < n_1 < n_2 < \ldots$. Furthermore, $U^i$ is $\mathbb R$-symmetric and $U^i \cap \mathbb R = I^i$ for all $i \geq 0$. The same holds for $\{ V^n \}_{ n \geq 0}$. The two sequences of puzzle pieces will be referred to as the (complex) {\it twin principal nest} of $f$.

Let $\phi_n$ denote the first return map to $U^{n-1} \cup V^{n-1}$, then $\phi_n$ is the analytic extension of $g_n$. Therefore the $n$-th generalized renormalization $g_n : C^n \cup I^n \cup J^n \cup D^n \to I^{n-1} \cup J^{n-1}$ can be extended analytically to a complex box mapping $\phi_n : L^n \cup U^n \cup V^n \cup R^n \to U^{n-1} \cup V^{n-1}$. Where $L^n$ is a $\mathbb R$-symmetric puzzle piece with $L^n \cap \mathbb R = C^n$ and $R^n$ is the $\mathbb R$-symmetric puzzle piece with $R^n \cap \mathbb R = D^n$. The combinatorial type and inducing step are defined the same as real box mappings. Clearly, $\phi_n$ and $g_n$ have the same combinatorial type.

\section{Decay of geometry}

Suppose $f \in \mathscr C$ has combinatorial sequence $\mathcal S = \{( \theta_n, r_n, t _n)\}_{n \geq 1}$. By Yoccoz puzzle, let 
\[ 
U^0 \supset U^1 \supset U^2 \supset \ldots \supset \{ c\} \mbox{ and } V^0 \supset V^1 \supset V^2 \supset \ldots \supset \{ d\}
\]
be its (complex) twin principal nest. For $n \geq 1$, take $A^n : = U^{n-1} \setminus \overline{U^n}$ and $B^n : = V^{n-1} \setminus \overline{V^n}$. The annuli $A^n$ and $B^n$ are known as {\it fundamental annuli}. (We will refer in the same way to the corresponding open and semi-open annuli as well.) The {\it principal moduli} is defined as: $$\mu_n = \min\{ \m A^n, \m B^n \}.$$

The aim of this section is to prove the following proposition from which Theorem 1 will be deduced.

\begin{prop}
Suppose $f \in \mathscr C$ with $\mu_1 \geq \tau$. Then there exists a constant $C = C(\tau, f) >0$ such that $\mu_n \geq C \cdot n$ for every $n \geq 1$.
\end{prop}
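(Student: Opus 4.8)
The plan is to show that the principal moduli $\mu_n$ grow at least linearly by exploiting the structure of the twin-type holomorphic box mapping $\phi_n : U^n \cup L^n \cup V^n \cup R^n \to U^{n-1}\cup V^{n-1}$ constructed in Section 3, together with the combinatorial bounds $r_n\ge 2$, $t_n\ge 1$ and the type restrictions ($\theta_n\notin\{\mathcal D^+,\mathcal D^-\}$) coming from Section 2. The mechanism is standard in the spirit of \cite{GS}: one step of generalized renormalization always \emph{adds} a definite amount of modulus, coming from the univalent pullbacks along the non-central branches between consecutive scales, while the branched (central) branches only pull back modulus without loss by the Gr\"otzsch inequality. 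So I expect an estimate of the shape $\mu_{n+1} \ge \mu_n + \delta$ for some $\delta = \delta(f) > 0$, which immediately yields $\mu_n \ge \mu_1 + (n-1)\delta \ge C\cdot n$.

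First I would set up the conformal-roughness / separation-index machinery referenced in the paper (subsections 4.1--4.2), which is the bookkeeping device that tracks how moduli of nice disks behave under pullback by the branches of a type I box mapping. The key inputs are: (a) the \emph{complex a priori bounds} (Proposition 4 of the paper / the non-decreasing separation index result of subsection 4.3), giving a lower bound $\mu_n \ge \tau_0 > 0$ for all $n$ depending only on $\mu_1 \ge \tau$; and (b) the fact that $\phi_{n+1}$ is obtained from $\phi_n$ by the inducing step $\mathcal I$, so $U^{n+1}\cup L^{n+1}$ is reached from $U^n$ by iterating the branches of $\phi_n$ at least $r_{n+1}-1\ge 1$ times (resp.\ $t_{n+1}-1\ge 0$ times) along the non-central branches before hitting a central branch. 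Each application of a non-central (univalent) branch $\phi_n|U_1^1$ is a univalent map onto all of $U^0$ or $V^0$, so its inverse spreads any smaller puzzle piece out to a definite ``margin'' inside $U^{n-1}\cup V^{n-1}$; the modulus of the annulus between the pullback and its ambient box is bounded below in terms of $\mu_n$ and the bounded geometry of the puzzle.

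The heart of the argument is to quantify the modulus gained in one renormalization step. I would decompose $A^{n+1}=U^n\setminus U^{n+1}$ as a concatenation of annuli pulled back under the successive branches of $\phi_n$ used to define $\phi_{n+1}$ on the central piece, and apply the Gr\"otzsch/super-additivity inequality for moduli of nested annuli: $\operatorname{mod}(U^n\setminus U^{n+1}) \ge \sum_j \operatorname{mod}(\text{intermediate annuli})$. Among these intermediate annuli at least one arises as a \emph{univalent} pullback of the ``outer'' annulus $U^{n-1}\setminus(\text{child domain})$, whose modulus is $\ge \mu_n$ up to a universal multiplicative factor (univalent maps preserve modulus; the loss only comes from the degree-2 central branch, where the Gr\"otzsch inequality still gives $\ge \tfrac12\mu_n$ with no additive loss). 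Crucially, because $r_{n+1}\ge 2$ there is at least one \emph{genuinely new} univalent pullback not present at level $n$, contributing a fixed extra amount $\delta$ bounded below in terms of the a priori bound $\tau_0$ and the geometry of the finitely many return types $\mathcal T$ (with the Fibonacci/type-$\mathcal B$ case handled via Lemmas 4.11--4.13 as the paper indicates). Taking the minimum over the two sides $U$ and $V$ gives $\mu_{n+1}\ge \mu_n+\delta$.

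The main obstacle I anticipate is the degenerate ``immediate branch coincides with a post-critical branch'' situation --- the Fibonacci case --- where the naive pullback decomposition does not obviously produce a \emph{new} annulus at each step, because the critical orbit is as close as possible to the critical point and the separation symbols can stall. This is exactly where one cannot rely on real bounds (which fail here) and must instead use the complex a priori bounds together with the fact, emphasized in the paper's Remark, that consecutive Fibonacci returns force the appearance of type $\mathcal B$ returns, whose asymmetric geometry restores a definite gain. So the real work is to verify that even along runs of Fibonacci returns the separation index still advances by a definite amount over a bounded number of steps --- say over any window of length $N_0$ --- which yields $\mu_{n+N_0}\ge \mu_n + \delta'$ and hence still linear growth $\mu_n\ge C\cdot n$ after adjusting the constant. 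Assembling this window estimate with the monotone a priori bound and summing telescopically over $n$ completes the proof.
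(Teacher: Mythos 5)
Your proposal follows essentially the same route as the paper: complex a priori bounds obtained from a non-decreasing normalized separation symbol, super-additivity of moduli under pullback along the branches of the inducing step, and a definite gain of separation norm over a bounded window of renormalizations, with runs of Fibonacci returns handled via the forced appearance of type $\mathcal B$ and the conformal-roughness alternative. The only caveat is that your opening claim of a per-step gain $\mu_{n+1}\ge\mu_n+\delta$ fails exactly in the Fibonacci case $(r,t)=(2,1)$, where the separation symbol is merely preserved, as you correctly recognize and repair in your final paragraph; the paper's actual mechanism is a gain of a definite $\eta>0$ over every window of $7$ consecutive inducing steps, split into the cases $t\ge 2$, $r\ge 3$ with $t=1$, and four consecutive Fibonacci returns.
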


\begin{proof}[Proof of Theorem 1]

It follows from Teichm$\ddot{\textrm u}$ller's module Theorem from \cite{LV}[pp. 56] and the above proposition that there exists $C_1 >0$ depends on $C$ such that 
\[
\log \frac{1}{\lambda_n} \geq C_1 \cdot n.
\]
This finishes the proof.

\end{proof}

\subsection{Moduli and Conformal Roughness} A topological annulus is a doubly connected domain $A \subset \mathbb{C}$. For $1 < r \leq \infty$, the {\it conformal modulus}, or simply {\it modulus}, of the round annulus $A_r = \{z \in \mathbb{C} : 1 < |z| <r \}$ is defined to be $\m A_r = \log{r}/(2\pi)$. Now, given any topological annulus $A$ which is not equal to a punctured disk or plane, there exists a conformal equivalence between $A$ and some round annulus $A_r$; hence we define $\m A = \m A_r$. Let $A_1$ and $A_2$ be annuli and $f: A_1 \to A_2$ be a holomorphic covering of degree $d$. Then $\m A_2  = d \cdot \m A_1 $. We mark here that if $A$ is a semi-open annulus, then $\m A$ is the same as $\m A^{\circ}$ where $A^{\circ}$ is the interior of $A$.

Two annuli $A_1$ and $A_2$ are called {\it nested} provided that they are disjoint and one separates the other from $\infty$. Let $A_1 \oplus A_2$ represent the smallest annulus containing both $A_1$ and $A_2$. The {\it super-additivity} of the modulus means that:
\[ \m(A_1 \oplus A_2) \geq \m A_1 + \m A_2.
\]

The following lemma concerns the situation when a mapping from one annulus onto another is holomorphic and proper but not a covering. The proof can be found in \cite{GS}.

\begin{lem}
Consider a topological disk $D_1'$, an annulus $U_1 \subset D_1'$ and the topological disk $D_1$ determined as the union of $U_1$ and the bounded component of the complement of $\overline U_1$. Denote $W_1 : =D_1' \setminus D_1 $. Suppose $f : D_1' \to D_2'$ is a degree 2 branched covering in the form $h \circ z^2$ where $h$ is univalent. Assume that $f$ is univalent on $D_1$. Define $D_2 : = f(D_1)$, $W_2 : = D_2' \setminus \overline D_2$ and $U_2 : = f(U_1)$. Choose non-negative numbers $\sigma_1$ and $\sigma_2$ such that
\begin{align*}
\sigma_2 & \leq \textrm{mod} \ U_2 \\
\sigma_1 & \leq \m \ U_2 + \m \ W_2.
\end{align*}
Then,
\begin{itemize}
\item[(1)] $\m \ U_1 + \m \ W_1 \geq \frac{1}{2}(\sigma_1+ \sigma_2)$;
\item[(2)] for every $\delta >0$ there is an $\epsilon >0$ such that if $\sigma_1 -\sigma_2 \geq \delta$ and $\m \ U_2 \geq \delta$, then
\[ \m (U_1 \oplus W_1) \geq \frac{\sigma_1 + \sigma_2}{2} + \epsilon.
\]
\end{itemize}
\end{lem}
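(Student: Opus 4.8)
\emph{Setup and reduction.} The statement involves only moduli of annuli and a degree-$2$ branched covering of the form $h\circ z^2$ with $h$ univalent, so the first move is to absorb $h$: replacing $D_2',D_2,U_2,W_2$ by their images under the conformal map $h^{-1}$ changes no modulus and reduces us to $f=z^2$. Then $D_1'=f^{-1}(D_2')$ is invariant under $z\mapsto-z$; write $-D_1$ for the mirror image of $D_1$. Univalence of $f$ on $D_1$ gives $(-D_1)\cap D_1=\emptyset$, hence $-D_1\subset W_1$, and $0\notin D_1$, so the critical value $0$ lies in $W_2=D_2'\setminus\overline{D_2}$ and the critical point $0$ lies in $W_1\setminus\overline{-D_1}$. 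Two facts drive everything: first, $f|U_1:U_1\to U_2$ is univalent, so $\m U_1=\m U_2$; second, $\mathcal P:=f^{-1}(W_2)=D_1'\setminus(\overline{D_1}\cup\overline{-D_1})$ is a pair of pants on which $f$ is a degree-$2$ branched covering onto $W_2$ (branched only over $0$), and $W_1$ is obtained from $\mathcal P$ by filling in the inner hole $-D_1$.

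\emph{Part (1).} Since the conclusion is monotone in $\sigma_1,\sigma_2$ it suffices to take $\sigma_2=\m U_2$, $\sigma_1=\m U_2+\m W_2$; using $\m U_1=\m U_2$ the inequality reduces to $\m W_1\ge\tfrac12\m W_2$. I would prove this by transplanting the extremal potential. Let $u:W_2\to[0,1]$ be the capacity potential ($0$ on $\partial D_2$, $1$ on $\partial D_2'$), so that $\m W_2=1/\mathcal D_{W_2}(u)$, where $\mathcal D$ denotes Dirichlet energy. Its pull-back $\widetilde u:=u\circ f$ is harmonic on $\mathcal P$ (the branch point being harmless), equals $0$ on $\partial D_1\cup\partial(-D_1)$ and $1$ on $\partial D_1'$, and by the change of variables $w=f(z)$ for the double covering $\mathcal D_{\mathcal P}(\widetilde u)=2\,\mathcal D_{W_2}(u)$ (the factor $2$ is the degree). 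As $\widetilde u$ vanishes on $\partial(-D_1)$, extending it by $0$ across the filled hole gives $\widehat u\in W^{1,2}(W_1)$ with $\widehat u=0$ on $\partial D_1$, $\widehat u=1$ on $\partial D_1'$ and $\mathcal D_{W_1}(\widehat u)=\mathcal D_{\mathcal P}(\widetilde u)=2\,\mathcal D_{W_2}(u)$. Dirichlet's principle then gives $1/\m W_1=\mathcal D_{W_1}(u_{W_1})\le\mathcal D_{W_1}(\widehat u)=2/\m W_2$, i.e.\ $\m W_1\ge\tfrac12\m W_2$, and (1) follows.

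\emph{Part (2).} An elementary dichotomy reduces to maximal $\sigma_i$ with $\m W_2\ge\delta$ and $\m U_1=\m U_2\ge\delta$: if $\m W_2<\delta$ then $\sigma_1-\sigma_2\ge\delta$ forces $\m U_2-\sigma_2>\delta/2$, already yielding a surplus $>\delta/4$ in $\m U_1+\m W_1\ge\tfrac{\sigma_1+\sigma_2}{2}$. Otherwise, super-additivity together with Part (1) give $\m(U_1\oplus W_1)\ge\m U_1+\m W_1\ge\tfrac{\sigma_1+\sigma_2}{2}$, and the surplus equals the super-additivity defect $\Delta_{\mathrm s}:=\m(U_1\oplus W_1)-\m U_1-\m W_1\ge0$ plus $\m W_1-\tfrac12\m W_2$; a short computation from $\m W_1=1/\mathcal D_{W_1}(u_{W_1})$ shows the latter is at least $\tfrac{\delta^2}{4}\Delta_{\mathrm D}$, where $\Delta_{\mathrm D}:=\mathcal D_{W_1}(\widehat u)-\mathcal D_{W_1}(u_{W_1})=\mathcal D_{W_1}(\widehat u-u_{W_1})\ge0$ is the Dirichlet defect from Part (1). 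Both defects are strictly positive: $\Delta_{\mathrm D}>0$ because $\widehat u\equiv0$ on $-D_1$ whereas $u_{W_1}$, being harmonic on all of $W_1$, is non-constant there; $\Delta_{\mathrm s}>0$ because $U_1$ and $W_1$ cannot be parallel round annuli while $W_1$ contains the off-centre disk $-D_1$ whose internal modulus equals $\m U_1\ge\delta$. To promote strict positivity to a uniform $\epsilon=\epsilon(\delta)>0$, one disposes of the regimes $\m W_2\to\infty$ and $\m U_2\to\infty$ by a direct estimate (the slack in Part (1), respectively in super-additivity, then blows up), and treats $\m W_2,\m U_2$ in any fixed bounded range by a normal-families argument: after a M\"obius normalisation sending $D_1$ to the unit disk and passage to Carath\'eodory limits, no sequence can have $\Delta_{\mathrm s}+\Delta_{\mathrm D}\to0$ unless the limiting configuration degenerates in a manner incompatible with $\m U_2\ge\delta$.

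\emph{Main obstacle.} Part (1) is essentially a short variational estimate once the geometric picture is in place; the real difficulty is the \emph{uniform} bound in Part (2). Qualitative strict positivity of $\Delta_{\mathrm s}$ and $\Delta_{\mathrm D}$ is immediate, but extracting a $\delta$-dependent $\epsilon$ requires the compactness argument above together with a rigidity statement for the equality case of super-additivity of moduli of nested annuli — precisely the \emph{conformal roughness} estimates of \cite{GS}, which we invoke.
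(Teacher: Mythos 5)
The paper does not prove this lemma at all --- it is quoted verbatim from \cite{GS} with the remark ``The proof can be found in \cite{GS}'' --- so your argument can only be judged on its own terms. Your Part (1) is correct and is the standard transplant argument: after absorbing $h$ you have $\m\, U_1=\m\, U_2$ by univalence of $f|D_1$, the pull-back of the capacity potential of $W_2$ to the pair of pants $f^{-1}(W_2)=D_1'\setminus(\overline{D_1}\cup\overline{-D_1})$ doubles the Dirichlet energy, extension by zero across $-D_1$ gives an admissible competitor on $W_1$, and the Dirichlet principle yields $\m\, W_1\ge\frac12\m\, W_2$, which together with $\m\, U_1=\m\, U_2$ gives (1). (Two cosmetic points: your dichotomy constant should be $\m\, W_2<\delta/2$ rather than $\delta$, and extending by zero across a possibly wild $\partial(-D_1)$ is more safely phrased via extremal length/upper gradients than via $W^{1,2}$ gluing.)

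Part (2), however, has a genuine gap exactly where the content of the lemma lies. First, the dismissal of the non-compact regimes is unjustified and in general false: if $D_2'$ is a large round disk centered at the critical value while $D_2$ and $U_2$ stay fixed, then $\m\, W_1-\frac12\m\, W_2$ does not blow up; it converges to the finite quantity $\frac{1}{2\pi}\log\bigl(\mathrm{cap}(\overline{D_1}\cup\overline{-D_1})/\mathrm{cap}(\overline{D_1})\bigr)$, and by letting $D_1$ be a tight spiral interleaved with its symmetric copy $-D_1$ this quantity, and with it your Dirichlet defect $\Delta_{\mathrm D}\le 2/\m\, W_2$, can be made arbitrarily small while $\m\, U_2\ge\delta$ and $\sigma_1-\sigma_2\ge\delta$ persist. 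In such a regime the entire gain $\epsilon$ must come from the super-additivity defect $\Delta_{\mathrm s}$, which you never bound from below: your stated reason for $\Delta_{\mathrm s}>0$ uses only that $-D_1$ has ``internal modulus $\ge\delta$'', and a disk with that property can be conformally negligible inside $W_1$ (any disk, however tiny, contains annuli of arbitrary modulus), so this input alone cannot produce a uniform $\epsilon(\delta)$; what has to be exploited quantitatively is that $-D_1$ is the centrally symmetric, congruent copy of the very disk $D_1$ bounded by the inner boundary of $W_1$, i.e.\ a dichotomy between a rough interface $\partial D_1$ (large $\Delta_{\mathrm s}$) and a round-ish one (in which case the symmetric hole forces a definite capacity gain). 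Second, the normal-families/Carath\'eodory step is only asserted: the moduli and the two defects are not continuous under such limits without separate semicontinuity arguments, and compactness itself was to be supplied by the very non-compact cases left unproved. Finally, at the decisive moment you ``invoke the conformal roughness estimates of \cite{GS}'' without indicating how Facts 4.1--4.3 would be applied to this configuration; since the lemma itself is taken from \cite{GS}, that appeal is close to circular. As written, Part (2) is a plausible plan rather than a proof.
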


It turns out that the inequality $\m(A_1 \oplus A_2) \geq \m A_1 + \m A_2$ becomes sharp based on some properties of the curve separating $A_1$ from $A_2$. To study this phenomenon, Graczyk and \'Swi\c atiek introduced the concept of `conformal roughness'.

\begin{definition}
Let $w \subset \mathbb C$ be a Jordan curve. We say that $w$ is $(M,\epsilon)$-rough, $M,\epsilon \geq 0$, if for every pair of open annuli $A_1$ and $A_2$ satisfying the following conditions
\begin{itemize}
\item $A_1$ is contained in the bounded component of the complement of $w$,
\item $w$ is contained in the bounded component of the complement of $A_2$,
\item $\m A_1, \m A_2 \geq M$,
\end{itemize}
the inequality
\[\m(A_1 \oplus A_2) > \m\ A_1 + \m \ A_2 + \epsilon
\]
holds.
\end{definition}

The following facts can be found in \cite[Section 4.2]{GS}.

\begin{fact}
Let $U \subset W$ be two annuli that share the same inner boundary. Let $w$ be the outer boundary of $W$. Assume that $\m W \geq \Delta$ and the distance between the two connected components of $W^c$ is at least $\lambda \cdot \diam \ w$. For every $\Delta, \lambda, M, \rho>0$ there are $\epsilon, \epsilon'>0$ so that if $\mbox{\m U} + \epsilon' > \mbox{\m W}$, then either $w$ is $(M,\epsilon)$-rough or the Hausdorff distance between $U$ and $w$ is less than $\rho \cdot \diam\ w$.
\end{fact}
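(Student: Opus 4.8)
The plan is to prove the stated Fact by a normal-families argument, combining the rigidity of the extremal case of the super-additivity of the modulus with the continuity of the modulus in the non-degenerate regime; this is the approach of Graczyk-\'Swi\c atiek, and I would follow it.

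Assume the conclusion fails for some $\Delta,\lambda,M,\rho>0$. Choosing $\epsilon=\epsilon'=1/k$ for $k=1,2,\dots$ yields annuli $U_k\subset W_k$ with a common inner boundary, with $w_k:=\partial_{\mathrm{out}}W_k$, such that $\m W_k\ge\Delta$, the distance between the two components of $W_k^{c}$ is at least $\lambda\cdot\diam w_k$, and $\m U_k>\m W_k-1/k$, yet $w_k$ is \emph{not} $(M,1/k)$-rough and the Hausdorff distance between the outer boundary of $U_k$ and $w_k$ is at least $\rho\cdot\diam w_k$ (that is, the outer boundary of $U_k$ fails to be Hausdorff-close to $w_k$). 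Normalizing by an affine map --- harmless, since $\m$ is conformally invariant and the remaining quantities are scale invariant --- I may assume $\diam w_k=1$ and $0\in D_b(W_k)$, so all the curves and domains lie in a fixed bounded region. After passing to a subsequence I would arrange $w_k\to w_\infty$ and $\partial_{\mathrm{out}}U_k\to\gamma_\infty$ in the Hausdorff metric, while the nested domains $D_b(W_k)\subset U_k\subset W_k$ and $D_\infty(W_k)$ converge in the Carath\'eodory sense. The $\lambda$-separation hypothesis is exactly what rules out a pinching degeneration, so $w_\infty$ and $\gamma_\infty$ are again Jordan curves, $U_\infty\subseteq W_\infty$ are genuine annuli sharing the common inner boundary, and the modulus is continuous along the sequence (with values in $[\Delta,\infty]$). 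Hence $\m U_\infty=\m W_\infty$, and strict monotonicity of the modulus under essential inclusion of annuli forces $U_\infty=W_\infty$, so $\gamma_\infty=\partial_{\mathrm{out}}U_\infty=w_\infty$. Then $d_H(\partial_{\mathrm{out}}U_k,w_k)\to d_H(\gamma_\infty,w_\infty)=0$, contradicting the uniform lower bound $\rho$.

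The step I expect to be the main obstacle --- and the only place where non-roughness is used --- is the claim that $\partial_{\mathrm{out}}U_k$ and $w_k$ really have Hausdorff limits that are Jordan curves \emph{coinciding} with the Carath\'eodory-theoretic boundaries. A priori, since the collar $W_k\setminus U_k$ is thin ($\m W_k-\m U_k<1/k$), the curve $\partial_{\mathrm{out}}U_k$ could develop deep fjords and remain Hausdorff-far from $w_k$. The point is that such a fjord is a rough arc lying inside $D_b(w_k)$: because $\m U_k\ge\Delta-o(1)$ there is still room for an essential sub-annulus of modulus $\ge M$ on the inner side of the fjord, while an enclosing annulus of modulus $\ge M$ outside $w_k$ is always available, and the presence of a fjord of relative depth $\rho$ then forces a definite gain in $\m(A_1\oplus A_2)$ over $\m A_1+\m A_2$ --- i.e. forces $w_k$ to be $(M,\epsilon)$-rough for a fixed $\epsilon=\epsilon(\rho,\lambda,M)>0$, contradicting non-roughness once $1/k<\epsilon$. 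Quantifying this is where the rigidity of the equality case of $\m(A_1\oplus A_2)\ge\m A_1+\m A_2$, i.e. Teichm\"uller's module theorem, enters: a curve that is $(M,\epsilon)$-non-rough is within $\epsilon$ of the round model in the relevant uniformization, so in the limit $w_\infty$, and any curve enclosing a positive-modulus collar inside it, must be analytic circles after uniformization, which is incompatible with being Hausdorff-$\rho$-apart across a collar of vanishing modulus. Turning these compactness and continuity statements into rigorous lemmas under the bare hypotheses $\m W_k\ge\Delta$ and $\lambda$-separation, and carrying the non-roughness information cleanly through the limit, is the technical core; it is carried out in \cite{GS}.
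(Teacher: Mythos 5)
The paper does not actually prove this Fact: it is imported verbatim from \cite{GS}, Section 4.2, with only a citation, so the real comparison is with Graczyk--\'Swi\c atek's argument. Measured against what a proof must contain, your attempt has a genuine gap. Your first paragraph is a compactness argument in which the non-roughness hypothesis is never used, and without that hypothesis the conclusion is simply false: take $W$ to be a round annulus with a long, very thin tentacle (or fjord) attached to its outer boundary, and $U$ the round annulus itself. Then $\m U + \epsilon' > \m W$ for every $\epsilon'$ exceeding the negligible contribution of the tentacle, while the Hausdorff distance between $\partial_{\mathrm{out}}U$ and $w$ is a definite fraction of $\diam\, w$. So the dichotomy with roughness is not a technical refinement that can be postponed; it is the entire content, and your ``main argument'' proves nothing on its own. (There are also secondary problems there: $\m W_k$ may tend to $\infty$, in which case ``equal moduli of nested annuli with common inner boundary implies equality'' fails for the limiting punctured disks, and continuity of the modulus under Hausdorff convergence of $\partial_{\mathrm{out}}U_k$ is unjustified precisely because that curve is the object allowed to oscillate.)

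The second paragraph, where you acknowledge all the weight lies, does not close the gap, for two reasons. First, the quantifier on roughness runs the wrong way in your argument: by Definition 4.2, $w$ is $(M,\epsilon)$-rough only if the modulus gain holds for \emph{every} admissible pair $(A_1,A_2)$ with $\m A_i \geq M$, so exhibiting one cleverly chosen pair for which a fjord ``forces a definite gain'' establishes nothing. The usable implication is the reverse one: the hypothesis that $w_k$ is not $(M,1/k)$-rough hands you a single near-extremal pair $(A_1^k,A_2^k)$, and the proof must extract from that single pair, via the stability of the equality case in Teichm\"uller's module theorem together with Koebe distortion (this is where $\Delta$ and $\lambda$ enter), that $w_k$ is quantitatively close to a round circle in a uniformizing coordinate, after which $\m U_k + \epsilon' > \m W_k$ does force $\partial_{\mathrm{out}}U_k$ to be Hausdorff-close to $w_k$. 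You gesture at this in your final sentences but do not carry it out, and you end by deferring the ``technical core'' to \cite{GS} --- which is to say, the proof is not supplied. As it stands, the proposal identifies the correct circle of ideas but does not constitute a proof.
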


\begin{fact}
Let $w \subset \mathbb C$ be Jordan curve which is invariant under the rotation $2\pi/\ell$ about $0$, $\ell \in \mathbb N$. For every $M >0$, there are $L, \epsilon >0$ so that if $|z_1|/|z_2| \geq L$ for some $z_1, z_2 \in w$, then $w$ is $(M,\epsilon)$-rough.
\end{fact}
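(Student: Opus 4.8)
The plan is to deduce $(M,\epsilon)$--roughness from the strictness of the Gr\"otzsch inequality, using the rotational symmetry to rule out the only configurations in which that inequality is (almost) sharp. We may assume $\ell\ge 2$: for $\ell=1$ the hypothesis is no symmetry condition at all, and the statement is then false (an off--centre round circle has arbitrarily large radius ratio yet is not $(M,\epsilon)$--rough for any $\epsilon>0$, since for annuli concentric with it the Gr\"otzsch inequality is an equality), so the conclusion genuinely uses $\ell\ge2$; the curves we apply it to always carry a genuine $\mathbb Z/\ell$--symmetry with $\ell\ge2$. Fix $M>0$, and suppose $z_1,z_2\in w$ with $R_1:=|z_1|\ge L R_2$, $R_2:=|z_2|$, where $L$ is to be chosen large in terms of $M$. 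Put $\Omega=\{R_2<|z|<R_1\}$, so $\m\,\Omega=\tfrac1{2\pi}\log(R_1/R_2)\ge\tfrac1{2\pi}\log L$. Given annuli $A_1\subset D_b(w)$ and $A_2$ with $w$ in the bounded complement of $A_2$ and $\m\,A_1,\m\,A_2\ge M$, I want $\m(A_1\oplus A_2)>\m\,A_1+\m\,A_2+\epsilon$. Shrinking $A_1,A_2$ slightly (at negligible cost in modulus), we may assume $\overline{A_1},\overline{A_2}$ are disjoint from $w$; then $A:=A_1\oplus A_2$ is the annulus with complementary components $D_b(A_1)$ and $D_\infty(A_2)$, and $w\subset A$.

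First I would extract the geometric consequence of the symmetry. Since $w\subset\{R_2\le|z|\le R_1\}=\overline\Omega$ and $w$ meets each boundary circle of $\Omega$ (at $z_j$ and its $\rho$--orbit, $\rho$ the rotation by $2\pi/\ell$), and since a free orientation--preserving $\mathbb Z/\ell$--action on $w\cong S^1$ is conjugate to a rotation, the points of $w$ on $\{|z|=R_1\}$ and on $\{|z|=R_2\}$ interleave along $w$; hence $w$ contains at least two disjoint sub--arcs $\alpha_1,\alpha_2$, each joining the two boundary circles of $\Omega$ inside $\overline\Omega$. In particular no component of $\Omega\setminus w$ contains a core--homotopic curve of $\Omega$, and the arcs $\alpha_1,\alpha_2$ cut $\Omega$ into $\ge 2$ pieces lying in $D_b(w)$ (``inside'') and $\ge 2$ pieces lying in $D_\infty(w)$ (``outside''), alternating cyclically. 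Because $A_1\subset D_b(w)$ while $A_2$ surrounds $w$, $A_1\cap\Omega$ lies in the inside pieces and $A_2\cap\Omega$ in the outside pieces; in particular $A_1\cup A_2$ is disjoint from a neighbourhood of $\alpha_1\cup\alpha_2$ in $\Omega$.

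Then I would run the standard gluing estimate for extremal length, recalling that $\m\,A=\mathrm{EL}\bigl(\Gamma(A)\bigr)$, the extremal length of the family $\Gamma(A)$ of curves in $A$ joining its two boundary components. Let $\rho_i$ be the extremal metric of $\Gamma(A_i)$, normalised so that the $\rho_i$--width of $A_i$ equals $\m\,A_i$ (so its $\rho_i$--area is also $\m\,A_i$), extended by $0$ off $A_i$, and let $\rho_3=t/|z|$ on $\Omega\setminus(A_1\cup A_2)$ ($t>0$ small, to be optimised), extended by $0$ elsewhere. The three metrics have disjoint supports; set $\rho=\rho_1+\rho_2+\rho_3$. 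Any $\gamma\in\Gamma(A)$ crosses $A_1$ fully, crosses $A_2$ fully, and, joining $D_b(A_1)\subset D_b(w)$ to $D_\infty(A_2)\subset D_\infty(w)$, must cross $w$ and thereby traverse a $\rho_3$--radial extent of $\Omega$ of at least $\tfrac12\log(R_1/R_2)$ accumulated \emph{outside} $A_1\cup A_2$ --- this is where the alternating structure above is used, to ensure such a traversal cannot be short--circuited through $A_1$ or $A_2$. Hence $\int_\gamma\rho\,|dz|\ge \m\,A_1+\m\,A_2+\tfrac12 t\log(R_1/R_2)$ while $\iint\rho^2\le \m\,A_1+\m\,A_2+2\pi t^2\log(R_1/R_2)$, so with $S=\m\,A_1+\m\,A_2\ (\ge 2M)$ and $P=\log(R_1/R_2)\ (\ge\log L)$,
\[
\m(A_1\oplus A_2)\ \ge\ \frac{(S+\tfrac12 tP)^2}{S+2\pi t^2P}.
\]
Taking $t=M/P$, the right side is at least $S+\epsilon$ for some $\epsilon=\epsilon(M)>0$ as soon as $P$ is large compared with $M$; this fixes $L=L(M)$ and proves the Fact.

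The hard part will be the claim used in the third paragraph: that every curve joining $D_b(A_1)$ to $D_\infty(A_2)$ accumulates a definite amount of $\Omega$--modulus \emph{disjoint} from the crossings of $A_1$ and $A_2$. One must control how $A_1$ (inside $w$) and $A_2$ (around $w$) may protrude radially into $\Omega$, and show that because the inside/outside components of $\Omega\setminus w$ interleave around the annulus --- which is exactly what $\ell\ge2$ together with the large radius ratio provides --- any radial progress a connecting curve makes through $A_i$ is paid for in $\m\,A_i$, while the crossing of $w$ still costs a definite $\Omega$--length in the complement of $A_1\cup A_2$. This bookkeeping, the quantitative conformal version of ``$w$ oscillates, hence is far from every round circle'', is carried out in \cite[Section 4.2]{GS}; Lemma 4.1 (for the branched--covering step that feeds this into the box mapping) and Fact 4.1 are the companion tools, and together they supply everything the later sections use.
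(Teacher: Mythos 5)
Your argument has a genuine gap: the key claim in your third paragraph is false. You assert that every curve $\gamma$ joining $D_b(A_1)$ to $D_{\infty}(A_2)$ must accumulate $\rho_3$-length at least $\tfrac12 t\log(R_1/R_2)$ outside $A_1\cup A_2$, on the grounds that ``any radial progress a connecting curve makes through $A_i$ is paid for in $\m A_i$''. But crossing $A_i$ costs only $\m A_i$ in the extremal metric $\rho_i$, no matter how much radial progress the curve makes while inside $A_i$, and that cost is exactly the superadditivity baseline; there is no extra charge for radial travel inside the $A_i$. Concretely, take $\ell=2$ and $w$ the ellipse $x^2/R_1^2+y^2/R_2^2=1$ (invariant under $z\mapsto -z$, with $R_1/R_2$ arbitrarily large). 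Let $A_1$ be the interior of $w$ with the Euclidean $\eta$-collar of $w$ and a small disk $\overline{D(0,\delta)}$ removed, and let $A_2$ be the exterior of $w$ with the $\eta$-collar removed, intersected with $\{|z|<K\}$; for $\eta,\delta$ small and $K$ large these are admissible annuli with $\m A_1,\m A_2\ge M$, and they are compatible with your alternating inside/outside lobe structure. A connecting curve can run along the positive real axis: it stays in $A_1$ up to radius about $R_1-\eta$, crosses the collar (and $w$) near the tip $(R_1,0)$ along a segment of length about $2\eta$, and then continues inside $A_2$ out to $|z|=K$. Its $\rho_3$-length outside $A_1\cup A_2$ is $O(t\eta/R_1)$, not $\ge\tfrac12 t\log(R_1/R_2)$, so your displayed lower bound for $\m(A_1\oplus A_2)$ does not follow. (The modulus defect in this configuration is indeed at least $\epsilon(M)$, as the Fact asserts, but it arises because $A_1,A_2$ are far from being complementary round subannuli of $A_1\oplus A_2$; the metric $\rho_1+\rho_2+\rho_3$ cannot see this.)

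Moreover, the proposal is not self-contained precisely where the difficulty lies: you defer this very step (``the bookkeeping'') to \cite[Section 4.2]{GS}, but Graczyk--\'Swi\c atek do not prove your length claim (it is false); their argument is of a different nature --- a small defect together with collars of modulus $\ge M$ on both sides forces $w$ to be close to a round circle in the uniformization of $A_1\oplus A_2$ (this is the role of the companion Fact 4.1), and the rotational symmetry with $\ell\ge 2$ then pins the center of that circle at $0$, contradicting $|z_1|/|z_2|\ge L$ for $L$ large. Note also that the present paper gives no proof of this Fact at all --- it is quoted from \cite{GS} --- so the comparison is necessarily with that source. Two smaller points: your observation that $\ell\ge 2$ is genuinely needed (an off-center round circle falsifies the $\ell=1$ case) is correct and worth making explicit; and for the inclusion $w\subset\overline{\Omega}$ you must take $z_1,z_2$ to be points of maximal and minimal modulus on $w$ (which the hypothesis permits), since for an arbitrary pair with $|z_1|/|z_2|\ge L$ the containment fails.
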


\begin{fact}
Let $w \subset \mathbb C$ be Jordan curve, separating $0$ from $\infty$ and invariant under the rotation $2\pi/\ell$ about $0$, $\ell \in \mathbb N$. Let $U_{\delta} : = \{ z \in \mathbb C: \dist(z,w) \leq \delta \cdot \diam\ w \}$. Assume that function $\psi$ is defined on $U_{\delta}$ in the form $h \circ z^{\ell}$ with $h$ univalent. For every $\ell$ and $M, \delta, \epsilon_1 >0$, there are $M_1,\epsilon>0$, where $M_1$ depending only on $M$ and $\delta$, so that if $\psi(w)$ is $(M_1, \epsilon_1)$-rough, then $w$ is $(M, \epsilon)$-rough.
\end{fact}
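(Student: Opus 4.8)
The plan is to peel off the two factors of $\psi=h\circ z^{\ell}$ one at a time. The univalent factor $h$ is conformal onto its image, so it transports the whole ``roughness picture'' — moduli of annuli, the operation $\oplus$, nestedness, and the strict super‑additivity that defines $(M,\epsilon)$‑roughness — with no change at all, as long as it is only fed annuli lying inside $U_\delta$. Hence, modulo the reduction below, it suffices to prove the statement for $\psi=z^{\ell}$, i.e.: if the Jordan curve $w':=z^{\ell}(w)$ is $(M_1,\epsilon_1)$‑rough then $w$ is $(M,\epsilon)$‑rough. The factor $z^{\ell}$ is not conformal, but on a rotation‑invariant neighbourhood of $w$ not containing $0$ it is an unbranched degree‑$\ell$ covering: it multiplies the modulus of every rotation‑invariant annulus around $0$ by $\ell$ (by the covering‑degree formula $\mbox{\m}A_2=d\cdot\mbox{\m}A_1$), carries the rotation‑invariant curve $w$ onto the Jordan curve $w'$, and commutes with $\oplus$ on rotation‑invariant annuli. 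So the scheme is: start from a pair $A_1\subset D_b(w)$, $A_2$ surrounding $w$, with $\mbox{\m}A_i\geq M$; manufacture from them rotation‑invariant annuli $\widetilde A_i\subset U_\delta$ still straddling $w$, with moduli bounded below by a quantity $M'=M'(M,\delta,\ell)$; push them forward through $z^{\ell}$ and then $h$ to get annuli $B_i$ straddling $\psi(w)$ with $\mbox{\m}B_i=\ell\,\mbox{\m}\widetilde A_i\geq \ell M'=:M_1$; apply $(M_1,\epsilon_1)$‑roughness of $\psi(w)$ to $(B_1,B_2)$; and pull the resulting strict inequality back through $h$ and $z^{\ell}$ to obtain $\mbox{\m}(\widetilde A_1\oplus\widetilde A_2)>\mbox{\m}\widetilde A_1+\mbox{\m}\widetilde A_2+\epsilon_1/\ell$, from which the required strict super‑additivity for $(A_1,A_2)$ follows with $\epsilon$ a definite fraction of $\epsilon_1$.

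Two preliminary reductions make the scheme available. First, if $w$ has points $z_1,z_2$ with $|z_1|/|z_2|$ large — quantitatively $\geq 1/(2\delta)$ — then $w$ is already $(M,\epsilon)$‑rough by Fact 4.2, and we are done; so we may assume $w$ is ``round'', i.e. $\max_w|z|/\min_w|z|$ is bounded. This guarantees $0\notin U_\delta$, that $U_\delta$ is a genuine annulus whose modulus is bounded above and below in terms of $\delta$ and $\ell$, and that $z^{\ell}|_{U_\delta}$ is the honest degree‑$\ell$ covering used above. Second, given a pair $A_1,A_2$ we may enlarge $A_1$ to $D_b(w)\setminus\overline{D_b(A_1)}$ and $A_2$ to $\mathbb{C}\setminus(\overline{D_b(w)}\cup\overline{D_\infty(A_2)})$; this only increases the moduli and leaves $A_1\oplus A_2$ unchanged, so we may assume that both annuli have $w$ itself as a boundary curve.

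The delicate point — and the step I expect to be the main obstacle — is the third ingredient of the scheme: passing from a general pair (which may poke far out of $U_\delta$ and need not be rotation‑invariant) to rotation‑invariant annuli sitting inside $U_\delta$ with moduli still controlled from below by $M'(M,\delta,\ell)$. Naively one would intersect $A_i$ with its rotates by $2\pi/\ell$ and with $U_\delta$, but a large‑modulus annulus bordering $w$ can be arbitrarily thin near $w$ in places, so this intersection may destroy the modulus; and the Gr\"otzsch/super‑additivity inequalities needed to discard the excess run in the wrong direction, so one cannot simply shrink and ignore the remainder. This is exactly the situation Fact 4.1 is designed for: applied to $A_1$ regarded as an annulus one of whose boundary components is (a piece of) $w$, with a freely chosen auxiliary annulus, it yields the dichotomy ``$w$ is $(M,\epsilon)$‑rough'' — in which case we are done — or ``$A_1$ is Hausdorff‑close to $w$ at the scale $\diam w$'', i.e. $A_1$ is effectively a collar of $w$ and hence contained in a $U_\delta$‑type neighbourhood; likewise for $A_2$. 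In the surviving case the annuli are collar‑like, the symmetrization to rotation‑invariant annuli in $U_\delta$ loses only a controlled amount of modulus, $M'(M,\delta,\ell)$ can be taken to grow with $M$, and the transport through $z^{\ell}$ and $h$ described above goes through, delivering the strict super‑additivity with $\epsilon$ a definite fraction of $\epsilon_1$ (further shrunk by the losses in the Fact 4.1 dichotomy and in the symmetrization). Assembling the two cases — rough by Facts 4.1--4.2, or rough by transport — finishes the proof; the only real work is organizing the quantifiers so that the auxiliary constants ($\rho,\lambda$ in Fact 4.1, $L$ in Fact 4.2, the symmetrization loss) depend on the data in the permitted way, with $M_1$ ending up a function of $M$, $\delta$ (and $\ell$) only.
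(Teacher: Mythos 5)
The paper offers no proof of this Fact: it is quoted verbatim from \cite{GS}, Section 4.2, so there is nothing in-paper to match your argument against, and it must be judged on its own terms. Judged so, it has a genuine gap at the very last step, the one that is supposed to close the argument. Roughness of $w$ is a universally quantified statement: you must verify $\m(A_1\oplus A_2)>\m A_1+\m A_2+\epsilon$ for \emph{every} admissible pair with $\m A_i\geq M$. Your scheme replaces the given pair $(A_1,A_2)$ by manufactured rotation-invariant collars $(\widetilde A_1,\widetilde A_2)\subset U_\delta$, proves the strict inequality for the new pair by transport through $z^{\ell}$ and $h$, and then asserts that ``the required strict super-additivity for $(A_1,A_2)$ follows.'' It does not. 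Writing $A_i\supset A_i''\cup\gamma_i\cup\widetilde A_i$ for the induced decomposition, the best the transported inequality gives is $\m(A_1\oplus A_2)>\m A_1''+\m\widetilde A_1+\m A_2''+\m\widetilde A_2+\epsilon_1/\ell$, and super-additivity only yields $\m A_i\geq\m A_i''+\m\widetilde A_i$ -- the wrong direction. The defects $\m A_i-\m A_i''-\m\widetilde A_i$ are not controlled by anything in your construction, and they cannot be: any essential annulus inside $U_\delta$ adjacent to $w$ has modulus bounded above by a constant depending on $\delta$ (and the roundness of $w$), while $\m A_i$ may be arbitrarily large, so your claim that $M'$ ``can be taken to grow with $M$'' is false and the gain $\epsilon_1/\ell$ is swamped. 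This is why the standard arguments for statements of this kind run in the contrapositive: one starts from a hypothetical pair with gain $<\epsilon$, uses the near-equality itself (via exact additivity across round circles in the uniformizing coordinate of $A_1\oplus A_2$, and the ``rough or Hausdorff-close'' dichotomy) to produce collars of $w$ of definite modulus lying in $U_\delta$, and transports \emph{those} to violate the $(M_1,\epsilon_1)$-roughness of $\psi(w)$; in that direction the problematic inference is never needed.

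Two secondary points also need repair. Your appeal to Fact 4.1 is not a legitimate application: that fact requires a nested configuration $U\subset W$ with common inner boundary and the near-saturation hypothesis $\m U+\epsilon'>\m W$, which you cannot arrange for a ``freely chosen auxiliary annulus'' attached to a general $A_1$; and its second alternative concerns the chosen $U$, so it does not by itself make $A_1$ (or a rotation-symmetric sub-annulus of definite modulus) sit inside $U_\delta$. And the bookkeeping of constants does not come out as the statement demands: in Fact 4.2 the threshold $L$ is dictated by $M$, not chosen as $1/(2\delta)$, so ``$w$ round'' only means the ratio is at most $L(M)$ and does not exclude $0\in U_\delta$ when $\delta$ is not small; more importantly, your $M_1=\ell M'$ depends on $\ell$, whereas the Fact requires $M_1=M_1(M,\delta)$ only, another sign that the intended proof is organized differently.
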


\subsection{Separating Index} Let $\phi$ be a complex box mapping, $\phi : U_0 \cup U_1 \cup V_0 \cup V_1 \to U \cup V$. If $\phi$ has a combinatorial type, then $\phi$ has two post-critical branches $U_1$ and $V_1$. In case that $\phi(U_1) = U$, denote $U_P = \phi^{-1}(U_0)$; and in case that $\phi(U_1) = V$, denote $U_P = \phi^{-1}(V_0)$. Analogously we obtain $V_P \subseteq V_1$.

\noindent
{\bf Separating annuli.} The {\it separating annuli} of $\phi$ are any 10 annuli $A_i$ and $B_i$, $1 \leq i \leq 5$, either open or degenerated to Jordan curves, that satisfying the following conditions:
\begin{itemize}
\item $A_i \subset U$ and $B_i \subset V$ for $1 \leq i \leq 5$;
\item $A_2$ is surrounding $U_0$ and disjoint with $U_1$;
\item $A_1$ is taken as the intersection of $U$ and the unbounded component of the complement of $\overline A_2$;
\item $A_3$ is surrounding $U_1$ and disjoint with $U_0$;
\item $A_4$ is taken as the intersection of $U$ and the unbounded component of the complement of $\overline A_3$
\item $A_5$ is uniquely determined as $U_1 \setminus \overline U_P$;
\item $B_i$ are taken analogously.
\end{itemize}

\noindent
{\bf Separation symbols.} Let $\omega_i : = \min\{\m A_i, \m B_i\}$, $1 \leq i \leq 5$. A separation symbol $\Sigma$ is a choice of separating annuli as described above together with a quadruple of real numbers $(s_1,s_2,s_3,s_4)$ so that the following inequalities hold:
\begin{align*}
s_1 &\leq \omega_1 + \omega_2\\
s_2 &\leq \omega_2\\
s_3 &\leq \omega_3 + \omega_5\\
s_4 &\leq \omega_3 + \omega_4 + \omega_5.
\end{align*}

\noindent
{\bf Normalized symbol.} Choose $\beta>0$, set $\alpha:=\beta/2$. Pick $\lambda_1$ and $\lambda_2$ so that:
$$-\alpha \leq \lambda_1, \lambda_2 \leq \alpha \mbox{ and } \lambda_1 + \lambda_2 \geq 0.$$
If a seperation symbol $\Sigma$ satisfies:
\begin{align*}
s_1 & = \alpha + \lambda_1\\
s_2 & = \alpha - \lambda_2\\
s_3 & = \beta - \lambda_1\\
s_4 & = \beta + \lambda_2.
\end{align*}
Then we will say that $\Sigma$ is normalized with norm $\beta$ and corrections $\lambda_1$ and $\lambda_2$.

\subsection{Complex Bounds} In this subsection, we will begin computing the separation index for complex box mapping after one step inducing. Suppose $\varphi_1$ is a complex box mapping which has combinatorial type $(r, t)$. Let $\varphi_2$ be the complex box mapping induced from $\varphi_1$. Set up so that $\varphi_i : U_0^i \cup U_1^i \cup V_0^i \cup V_1^i \to U_0^{i-1} \cup V_0^{i-1}$, $i = 1, 2$. Keep in mind that this is the case when $\varphi_1$ is the $n$-th generalized renormalization for some $n$.

\begin{lem}
There exist topological disks $W_c \supseteq U_0^2$ and $W_d \supseteq V_0^2$ such that $\varphi_2 : W_c \cup W_d \to U_0^0 \cup V_0^0$ is degree 2 branched covering on each component.
\end{lem}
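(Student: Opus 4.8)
The plan is a pull-back argument: I would enlarge the domain of each central branch of $\varphi_2$ until its image becomes a full box. The first step is to record the shape of $\varphi_2$ near the critical points. By the inducing construction of Section~2, transported verbatim to the twin-type holomorphic box mappings of Section~3, there is an integer $r\ge 2$ — the first entry of the combinatorial type of $\varphi_1$, which by definition is the return depth at \emph{both} $c$ and $d$ — such that $\varphi_2|_{U_c^2}=\varphi_1^{\,r}$ and $\varphi_2|_{V_d^2}=\varphi_1^{\,r}$; moreover, since this return is non-central, the orbit of each turning point avoids the central branches before it returns, i.e.\ $\varphi_1^{\,j}(c),\varphi_1^{\,j}(d)\in U_1^1\cup V_1^1$ for $1\le j\le r-1$. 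I would also invoke the structure of a twin-type box mapping: $\varphi_1$ is a conformal isomorphism of $U_1^1$ (resp.\ $V_1^1$) onto one of the full boxes $U_c^0,V_d^0$, whereas $\varphi_1|_{U_c^1}$ (resp.\ $\varphi_1|_{V_d^1}$) is a degree-$2$ branched covering onto a full box with unique critical point $c$ (resp.\ $d$).

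Next I would build $W_c$ by pulling the appropriate box back along the orbit of $c$. Write $z_j=\varphi_1^{\,j}(c)$, so $z_0=c$, $z_j\in U_1^1\cup V_1^1$ for $1\le j\le r-1$, and $z_r=\varphi_2(c)$ lies in $U_c^1$ or $V_d^1$; let $\mathfrak B\in\{U_c^0,V_d^0\}$ be the box containing $z_r$, and for $1\le j\le r-1$ let $N_j\in\{U_1^1,V_1^1\}$ be the branch containing $z_j$. Setting $\Omega_{r-1}:=N_{r-1}$ (note $\varphi_1(N_{r-1})=\mathfrak B$, because $z_r\in\varphi_1(N_{r-1})$ and $\varphi_1|_{N_{r-1}}$ is onto a full box) and inductively $\Omega_j:=(\varphi_1|_{N_j})^{-1}(\Omega_{j+1})$ for $j=r-2,\dots,1$, each $\Omega_j$ is a topological disk containing $z_j$ and $\varphi_1^{\,r-1}$ maps $\Omega_1$ conformally onto $\mathfrak B$. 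I would then put $W_c:=(\varphi_1|_{U_c^1})^{-1}(\Omega_1)$; since $\Omega_1$ is a topological disk containing the critical value $z_1=\varphi_1(c)$, Riemann--Hurwitz gives that $W_c$ is a topological disk with $c\in W_c$ and that $\varphi_1|_{W_c}:W_c\to\Omega_1$ is a degree-$2$ branched covering, whence
\[
\varphi_2|_{W_c}=\varphi_1^{\,r}|_{W_c}=\bigl(\varphi_1^{\,r-1}|_{\Omega_1}\bigr)\circ\bigl(\varphi_1|_{W_c}\bigr):W_c\longrightarrow\mathfrak B
\]
is a degree-$2$ branched covering onto the box $\mathfrak B$. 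Running the identical construction at $d$ produces $W_d\ni d$ and a box $\mathfrak B'\in\{U_c^0,V_d^0\}$ with $\varphi_2|_{W_d}:W_d\to\mathfrak B'$ a degree-$2$ branched covering.

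Finally I would check three points. First, $U_c^2\subseteq W_c$ (and $V_d^2\subseteq W_d$): the image $\varphi_2(U_c^2)\in\{U_c^1,V_d^1\}$ contains $\varphi_2(c)=z_r\in\mathfrak B$, hence $\varphi_2(U_c^2)\subseteq\mathfrak B$; since $U_c^2$ and $W_c$ are the pull-backs of $\varphi_2(U_c^2)$ and of $\mathfrak B$ respectively along the \emph{same} orbit branch $c\mapsto z_1\mapsto\cdots\mapsto z_r$, monotonicity of pull-backs forces $U_c^2\subseteq W_c$. Second, $\mathfrak B\ne\mathfrak B'$: these are the boxes containing $\varphi_1^{\,r}(c)$ and $\varphi_1^{\,r}(d)$, which by the defining property of $\mathcal F_*$ always lie in different components of $U_c^0\cup V_d^0$; thus $\{\mathfrak B,\mathfrak B'\}=\{U_c^0,V_d^0\}$. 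Third, combining, $\varphi_2:W_c\cup W_d\to U_c^0\cup V_d^0$ is a degree-$2$ branched covering on each component, which is the claim.

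The only substantive ingredient is the combinatorial fact used at the outset — that under $\varphi_1$ the orbit of each turning point meets only the non-central branches up to its return time, i.e.\ that $\varphi_1$ has a well-defined non-central combinatorial type — and this has already been recorded in Sections~2 and~3, so I do not expect a genuine obstacle here. Everything else is Riemann--Hurwitz bookkeeping: that each pull-back of a topological disk remains a topological disk (conformal for the non-central steps, a degree-$2$ branched covering over a disk containing the critical value for the last step) and that the two resulting boxes $\mathfrak B,\mathfrak B'$ exhaust $U_c^0\cup V_d^0$.
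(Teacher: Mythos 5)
Your proposal is correct and follows essentially the same route as the paper: pull the target box back univalently along the orbit of the critical value through the non-central branches, then take one final degree-two pull-back through the central branch and invoke Riemann--Hurwitz. The paper carries this out after a without-loss-of-generality reduction to a specific type, while you keep track of the alternating branches $N_j$ explicitly and verify $U_c^2\subseteq W_c$ and that the two image boxes are distinct, but these are only presentational differences.
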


\begin{proof}

We may assume that $\varphi_2$ is of type $\mathcal B$, namely $\varphi_2 : U_0^2 \to U_0^1$ is degree 2 branched covering. Note that $\varphi_2|U_0^2 = \varphi_1^r|U_0^2$. We pullback the topological disk $U_0^0$ along $\{ \varphi_1^i(c)\}_{i =0}^r$ and denoted by $W_c$ the disk produced. The pullback is certainly unimodal, hence $\varphi_2 : W_c \to U_0^0$ is degree 2 branched covering. The disk $W_c$ produced is contained in $U_0^1$ and contains $U_0^2$. The construction of $W_d$ is similar. This finishes the proof.

\end{proof}

\begin{lem}
Consider a normalized separation symbol
\[ \Sigma = (s_1=\alpha+\lambda_1, s_2 =\alpha-\lambda_2, s_3 = \beta-\lambda_1,s_4=\beta+\lambda_2 )
\]
and suppose that $(s_1,s_2,s_3+\epsilon,s_4+\epsilon)$ is another normalized separation symbol with $\epsilon>0$. Then there is a normalized separation symbol with norm $\beta + {\epsilon}/{2}$.
\end{lem}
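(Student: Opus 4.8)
The plan is to keep the separating annuli of the improved symbol unchanged and merely rebalance the two corrections, shifting part of the surplus $\epsilon$ into the norm. Let $\{A_i',B_i'\}_{1\le i\le 5}$ be the separating annuli underlying the second separation symbol $\Sigma'=(s_1,s_2,s_3+\epsilon,s_4+\epsilon)$ and set $\omega_i':=\min\{\m A_i',\m B_i'\}$; by the defining inequalities of a separation symbol,
\[s_1\le\omega_1'+\omega_2',\qquad s_2\le\omega_2',\qquad s_3+\epsilon\le\omega_3'+\omega_5',\qquad s_4+\epsilon\le\omega_3'+\omega_4'+\omega_5'.\]
(Only these inequalities are used; the fact that $\Sigma'$ is itself normalized is not needed.)

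I would then set $\tilde\beta:=\beta+\tfrac{\epsilon}{2}$, $\tilde\alpha:=\tilde\beta/2=\tfrac{\beta}{2}+\tfrac{\epsilon}{4}$, and choose new corrections $\tilde\lambda_1:=\lambda_1-\tfrac{\epsilon}{4}$, $\tilde\lambda_2:=\lambda_2+\tfrac{\epsilon}{4}$. The first point to check is that $(\tilde\beta,\tilde\lambda_1,\tilde\lambda_2)$ is an admissible normalization datum: since $-\tfrac{\beta}{2}\le\lambda_1,\lambda_2\le\tfrac{\beta}{2}$ one gets $-\tilde\alpha\le\tilde\lambda_1\le\tfrac{\beta}{2}-\tfrac{\epsilon}{4}<\tilde\alpha$ and $-\tilde\alpha<-\tfrac{\beta}{2}+\tfrac{\epsilon}{4}\le\tilde\lambda_2\le\tilde\alpha$, while $\tilde\lambda_1+\tilde\lambda_2=\lambda_1+\lambda_2\ge 0$ is unchanged. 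Next, I would attach to the same annuli $\{A_i',B_i'\}$ the quadruple $\tilde s_1:=\tilde\alpha+\tilde\lambda_1$, $\tilde s_2:=\tilde\alpha-\tilde\lambda_2$, $\tilde s_3:=\tilde\beta-\tilde\lambda_1$, $\tilde s_4:=\tilde\beta+\tilde\lambda_2$, and verify the four separation inequalities. Direct substitution gives $\tilde s_1=s_1$, $\tilde s_2=s_2$, $\tilde s_3=s_3+\tfrac{3\epsilon}{4}\le s_3+\epsilon$ and $\tilde s_4=s_4+\tfrac{3\epsilon}{4}\le s_4+\epsilon$, so the displayed inequalities for $\Sigma'$ at once give $\tilde s_1\le\omega_1'+\omega_2'$, $\tilde s_2\le\omega_2'$, $\tilde s_3\le\omega_3'+\omega_5'$, $\tilde s_4\le\omega_3'+\omega_4'+\omega_5'$. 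Hence $(\{A_i',B_i'\};\tilde s_1,\tilde s_2,\tilde s_3,\tilde s_4)$ is a normalized separation symbol with norm $\beta+\tfrac{\epsilon}{2}$, as required.

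This step carries no analytic content — it is pure normalization bookkeeping. The only thing that takes a moment's thought is to raise the norm while keeping both corrections inside $[-\tilde\alpha,\tilde\alpha]$ and their sum nonnegative: since $s_1$ and $s_2$ were not improved, the entire surplus $\epsilon$ lives on the two inequalities feeding $s_3$ and $s_4$, and the symmetric shift $\lambda_1\mapsto\lambda_1-\epsilon/4$, $\lambda_2\mapsto\lambda_2+\epsilon/4$ is the natural choice that closes all four constraints at once, producing the gain $\epsilon/2$. (One could actually push the shift to $\epsilon/3$ and gain $2\epsilon/3$, but $\epsilon/2$ is all that will be needed.)
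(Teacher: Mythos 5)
Your proof is correct. The paper itself gives no argument here — it simply cites \cite{GS} (Lemma~4.1.2) — and your computation is exactly the standard rebalancing that lemma performs: keep the annuli of the improved symbol, shift the corrections by $\mp\epsilon/4$, and absorb $\epsilon/2$ into the norm; all four inequalities and the constraints $-\tilde\alpha\le\tilde\lambda_1,\tilde\lambda_2\le\tilde\alpha$, $\tilde\lambda_1+\tilde\lambda_2\ge0$ check out as you verify.
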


\begin{proof}
See \cite[Lemma 4.1.2]{GS}.
\end{proof}

\begin{lem}
Suppose that $\varphi_1$ has combinatorial type $(r, t)$ with $t=1$. Assume that $\varphi_1$ has a normalized separation symbol $\Sigma^{(1)} = (s_1^{(1)},s_2^{(1)},s_3^{(1)},s_4^{(1)})$ with norm $\beta$ and corrections $\lambda_1$ and $\lambda_2$. Then $\varphi_2$ has a normalized separation symbol $\Sigma^{(2)}$ with norm $\beta$ and corrections 
$$\lambda_1'=\frac{\lambda_2}{2} \mbox{ and } \lambda_2' = \frac{\lambda_1}{2}.$$
In particular, we have $s_2^{(2)} \geq \beta/4$.
\end{lem}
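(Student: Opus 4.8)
The plan is to construct the ten separating annuli of $\varphi_2$, together with a normalizing quadruple $(s_1^{(2)},s_2^{(2)},s_3^{(2)},s_4^{(2)})$, by pulling the separating annuli of $\varphi_1$ back through the box map, and to control the resulting moduli using the relation $\m A_2=d\cdot\m A_1$ for degree-$d$ coverings, the super-additivity of the modulus, and Lemma 4.1. Since $t=1$ the structure of $\varphi_2=\mathcal I\varphi_1$ is rigid: its central branches are $\varphi_1^{r}|U_c^2$ and $\varphi_1^{r}|V_d^2$, while its non-central branches are single iterates, $\varphi_2|U_1^2=\varphi_1|U_1^2$ and $\varphi_2|V_1^2=\varphi_1|V_1^2$, with $U_1^2\subset U_c^1$ and $V_1^2\subset V_d^1$. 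Fix a box of $\varphi_2$, say $U_c^1$, and put $\psi:=\varphi_1|U_c^1$, a degree-$2$ branched covering of the form $h\circ z^2$ ($h$ univalent) onto a box $\square$ of $\varphi_1$. One checks directly that $\psi(U_1^2)$ is precisely the central domain of $\varphi_1$ lying inside $\square$, whereas $\psi(U_c^2)=\varphi_1(U_c^2)$ is a disk about the critical value $\varphi_1(c)$, hence sits inside a non-central domain of $\varphi_1$; following the orbit through the univalent non-central branches, $\varphi_1^{r-1}(U_c^2)$ falls inside the $\varphi_1$-preimage of a central domain, exactly as in the construction of the disks $W_c,W_d$ in the proof of Lemma 4.2. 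The combinatorics (Lemma 2.1 excludes type $\mathcal D$, and Lemmas 2.4--2.5 determine the type of $\varphi_2$ and which box each branch of $\varphi_2$ maps onto) let us run the argument in one representative case, say $\varphi_1$ of type $\mathcal A$ with $r$ odd, the other types and the parity $r$ even being handled the same way.

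The decisive feature is a crossover. An annulus in $U_c^1$ surrounding $U_c^2$ but missing $U_1^2$ can be produced by pulling back --- first through the univalent branch carrying $\varphi_1(U_c^2)$ to $\varphi_1^{r-1}(U_c^2)$, then through $\psi$ --- an annulus lying in a box of $\varphi_1$ which surrounds a small disk inside the central-preimage part of a non-central domain of $\varphi_1$; such an annulus has modulus at least $\omega_3^{(1)}+\omega_4^{(1)}+\omega_5^{(1)}\geq s_4^{(1)}$, and because the critical point $c$ is enclosed by the original annulus the last pullback is a degree-$2$ covering, so a factor $\tfrac12$ enters. Dually, an annulus in $U_c^1$ surrounding $U_1^2$ but missing $U_c^2$ is a $\psi$-preimage of an annulus in $\square$ surrounding the central domain of $\varphi_1$ but missing the non-central one, which may be taken of modulus at least $\omega_1^{(1)}+\omega_2^{(1)}\geq s_1^{(1)}$; here $c$ lies in the unbounded complement, the pullback is univalent, and \emph{no} factor $\tfrac12$ is lost. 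Finally $A_5^{(2)}=U_1^2\setminus\overline{P_c^2}$ has modulus equal, through the univalent $\varphi_1|U_1^2$, to $\m\big(\varphi_2(U_1^2)\setminus\overline{U_c^2}\big)=\tfrac12\,\m\big(\square\setminus\overline{\varphi_1(U_c^2)}\big)$, and the same orbit-following argument gives the lower bound $\tfrac12 s_4^{(1)}$. Doing this in the box $V_d^1$ as well and setting $\omega_i^{(2)}=\min\{\m A_i^{(2)},\m B_i^{(2)}\}$, one obtains $\omega_1^{(2)}\geq\tfrac12\omega_4^{(1)}$, $\omega_2^{(2)}\geq\tfrac12(\omega_3^{(1)}+\omega_5^{(1)})$, $\omega_3^{(2)}\geq\omega_2^{(1)}$, $\omega_4^{(2)}\geq\omega_1^{(1)}$, $\omega_5^{(2)}\geq\tfrac12 s_4^{(1)}$, where each bound holds for both the $A$- and the $B$-annulus because $\omega_i^{(1)}$ is a minimum over the two boxes of $\varphi_1$. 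Taking $s_1^{(2)}:=\tfrac12 s_4^{(1)}=\alpha+\tfrac{\lambda_2}{2}$, $s_2^{(2)}:=\tfrac12 s_3^{(1)}=\alpha-\tfrac{\lambda_1}{2}$, $s_3^{(2)}:=\beta-\tfrac{\lambda_2}{2}$, $s_4^{(2)}:=\beta+\tfrac{\lambda_1}{2}$ and using $\beta=2\alpha$, the four inequalities of Definition 4.2 follow: $s_2^{(2)}\leq\omega_2^{(2)}$ and $s_1^{(2)}\leq\omega_1^{(2)}+\omega_2^{(2)}$ from the bounds above together with $s_4^{(1)}\leq\omega_3^{(1)}+\omega_4^{(1)}+\omega_5^{(1)}$; and $s_3^{(2)}\leq\omega_3^{(2)}+\omega_5^{(2)}$, $s_4^{(2)}\leq\omega_3^{(2)}+\omega_4^{(2)}+\omega_5^{(2)}$ with room to spare, using $\lambda_1+\lambda_2\geq0$. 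The normalization conditions $-\alpha\leq\tfrac{\lambda_2}{2},\tfrac{\lambda_1}{2}\leq\alpha$ and $\tfrac{\lambda_2}{2}+\tfrac{\lambda_1}{2}\geq0$ are immediate from $|\lambda_i|\leq\alpha$ and $\lambda_1+\lambda_2\geq0$, and $s_1^{(2)}+s_3^{(2)}=s_2^{(2)}+s_4^{(2)}=3\alpha$, so the norm is again $\beta$. Hence $\Sigma^{(2)}$ is a normalized separation symbol of $\varphi_2$ with norm $\beta$ and corrections $\lambda_1'=\tfrac{\lambda_2}{2}$, $\lambda_2'=\tfrac{\lambda_1}{2}$, as claimed.

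The difficulty is purely in the bookkeeping. One must (i) run the discussion through every admissible type $\mathcal A,\mathcal B,\mathcal C$, each orientation, and each parity of $r$, reducing each time to the representative case as in Lemmas 2.4--2.5 and checking that the branch images are as described; (ii) make rigorous the step ``pull back along the univalent non-central branches until $\varphi_1^{r-1}(U_c^2)$ is reached'', which is exactly the device that produces the degree-$2$ disks $W_c,W_d$ of Lemma 4.2 and which requires the relevant inverse branches of $\varphi_1^{r-2}$ to be defined on disks large enough to contain the annuli in question; and (iii) verify, for each of the ten annuli, the dichotomy ``$c$ (resp.\ $d$) is enclosed, so the pullback is degree $2$'' versus ``not enclosed, so the pullback is univalent'', since it is precisely this dichotomy that produces the asymmetry between the halved ``central'' side and the unhalved ``non-central'' side, and hence the swap-and-halve $(\lambda_1,\lambda_2)\mapsto(\tfrac{\lambda_2}{2},\tfrac{\lambda_1}{2})$. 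No conformal-roughness input (Facts 4.1--4.3) is needed: the immediate return produces no modulus gain, and the norm is merely transported, $\beta\mapsto\beta$.
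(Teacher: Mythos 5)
Your strategy is the same as the paper's: pull the separating annuli of $\varphi_1$ back through the central branch $\psi=\varphi_1|U_c^1$, exploit the crossover between the two boxes, and let the degree-$2$ pullbacks halve the moduli; the resulting quadruple $s_1^{(2)}=\tfrac12 s_4^{(1)}$, $s_2^{(2)}=\tfrac12 s_3^{(1)}$, $s_3^{(2)}=\beta-\tfrac{\lambda_2}{2}$, $s_4^{(2)}=\beta+\tfrac{\lambda_1}{2}$ is exactly the paper's. However, there is one concrete error in the moduli bookkeeping, precisely at the point you single out as structurally decisive. You claim $\omega_4^{(2)}\geq\omega_1^{(1)}$ on the grounds that the relevant pullback is univalent because ``$c$ lies in the unbounded complement.'' This is false: by the definition of the separating annuli, $U_c^2\subset D_\infty(A_3^{(2)})$, so the critical point $c$ lies \emph{inside} $A_4^{(2)}=D_\infty(A_3^{(2)})\cap U_c^1$; correspondingly, the downstairs annulus $B_1^{(1)}=D_\infty(B_2^{(1)})\cap V_d^0$ contains $V_1^1$ and hence the critical value $\varphi_1(c)\in Q_d^1\subset V_1^1$. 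The pullback is therefore a degree-$2$ (branched) cover and only $\omega_4^{(2)}\geq\tfrac12\omega_1^{(1)}$ is available --- which is what the paper uses. For the same reason your prose bound $\geq\tfrac12(\omega_3^{(1)}+\omega_4^{(1)}+\omega_5^{(1)})$ for the annulus surrounding $U_c^2$ cannot be realized by a single separating annulus $A_2^{(2)}$ (the $B_4^{(1)}$ part contains $V_d^1$, whose preimage is $U_1^2$, which must stay in $D_\infty(A_2^{(2)})$); the $\omega_4^{(1)}$ contribution can only be booked into $A_1^{(2)}$, as your summary table in fact does.

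The conclusion survives the correction, but not ``with room to spare'': with $\omega_3^{(2)}\geq\omega_2^{(1)}$, $\omega_4^{(2)}\geq\tfrac12\omega_1^{(1)}$ and $\omega_5^{(2)}\geq s_1^{(2)}=\alpha+\tfrac{\lambda_2}{2}$ one writes
\[
\omega_2^{(1)}+\tfrac12\omega_1^{(1)}=\tfrac12\bigl(\omega_1^{(1)}+\omega_2^{(1)}\bigr)+\tfrac12\omega_2^{(1)}\geq\tfrac12 s_1^{(1)}+\tfrac12 s_2^{(1)}=\alpha+\tfrac{\lambda_1-\lambda_2}{2},
\]
so that $\omega_3^{(2)}+\omega_4^{(2)}+\omega_5^{(2)}\geq 2\alpha+\tfrac{\lambda_1}{2}=s_4^{(2)}$ --- an equality-sharp estimate requiring both $s_1^{(1)}$ and $s_2^{(1)}$, not the slack inequality your (unjustified) unhalved bound would give. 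You should repair the proof by replacing the univalence claim for $A_4^{(2)}$ with the degree-$2$ estimate and inserting the displayed splitting; everything else in your outline is consistent with the paper's argument.
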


\begin{proof}

Let $A_i^{(1)}$ and $B_i^{(1)}$, $1 \leq i \leq 5$, be the separating annuli for $\varphi_1$.  We shall construct the separating annuli $A_i^{(2)}$ and $B_i^{(2)}$ for $\varphi_2$ by pullback. Without loss of generality, we may assume that $\varphi_1$ is of type $\mathcal C$. By Lemma 2.4 and Lemma 2.5,  $\varphi_2$ is always of type $\mathcal A$. Set $\psi: = \varphi_1|U_0^1$.

Since $\varphi_1$ is of type $\mathcal C$, $\varphi_1 : U_0^1 \to V_0^0$ and $\varphi_1 : V_0^1 \to U_0^0$ are degree 2 branched covering, while $\varphi_1 : U_1^1 \to U_0^0$ and $\varphi_1 : V_1^1 \to V_0^0$ are univalent. Since $t=1$, $\varphi_2|U_1^2 \cup V_1^2 = \varphi_1$, hence $U_1^2$ and $V_1^2$ are two immediate branches for $\varphi_2$. Since $\varphi_2$ is of type $\mathcal A$, $\varphi_1(U_1^2) = V_0^1$ and $\varphi_1(V_1^2) = U_0^1$. Looking into the proof of  Lemma 2.1 and 2.2, $\psi(U_0^2) \subseteq V_P^1 \subseteq V_1^1$. In particular, $\psi(U_0^2) = V_P^1$ holds when $r=2$. Hence $\m (V_1^1 \setminus \psi(U_0^2) ) \geq \m (V_1^1 \setminus V_P^1 ) = \m B_5^{(1)}$.

The separating annuli $A_i$, $1 \leq i \leq 4$, are taken as:
\begin{align*}
A_2^{(2)} & = \psi^{-1}(B_3^{(1)} \oplus (V_1^1 \setminus \psi(U_0^2)))\\
A_2^{(1)} & = \psi^{-1}(B_4^{(1)})\\
A_3^{(2)} & = \psi^{-1}(B_2^{(1)})\\
A_4^{(2)} & = \psi^{-1}(B_1^{(1)}).
\end{align*}
Then $\psi:A_2^{(2)} \to  B_3^{(1)}\oplus (V_1^1 \setminus \psi(U_0^2)) $ and $\psi : A_2^{(1)} \to B_4^{(1)}$ are degree 2 branched covering. Clearly $A_2^{(2)}$ is surrounding $U_0^2$. Since $B_3^{(1)}$ is disjoint with $V_0^1$, $A_2^{(2)}$ is disjoint with $U_1^2$.

Therefore,
\begin{align*} 
\m A_2^{(2)} & \geq \frac{1}{2}(\m B_3^{(1)} + \m B_5^{(1)}) \\
\m A_2^{(1)} & \geq \frac{1}{2}\m B_4^{(1)} \\
\m A_3^{(2)} & = \m B_2^{(1)} \\
\m A_4^{(2)} & \geq  \frac{1}{2}\m B_1^{(1)}.
\end{align*}
The annulus $A_5^{(2)}$ is conformally equivalent to $V_0^1 \setminus V_0^2$, hence its modulus is at least $s_1^{(2)}$. Construct $B_i^{(2)}$ in the same way, $1 \leq i \leq 5$. Then,

\begin{align*}
s_2^{(2)} &\geq \frac{1}{2}(\omega_3^{(1)} + \omega_5^{(1)})
\geq \frac{1}{2} s_3^{(1)} = \frac{1}{2}(\beta-\lambda_1) = \alpha-\frac{\lambda_1}{2}\\
s_1^{(2)} &= \omega_1^{(2)} +   \omega_2^{(2)}   \geq \frac{1}{2}s_3^{(1)} + \frac{1}{2} \omega_4^{(1)} = \frac{1}{2} s_4^{(1)} = \alpha + \frac{\lambda_2}{2}\\
s_3^{(2)} & = \omega_3^{(2)} + \omega_5^{(2)} \geq s_2^{(1)} + s_1^{(2)}
 = \alpha - \lambda_2 + \alpha +  \frac{\lambda_2}{2} = \beta -  \frac{\lambda_2}{2}\\
s_4^{(2)} &=\omega_3^{(2)} +\omega_4^{(2)} + \omega_5^{(2)} \geq s_1^{(1)} + s_1^{(2)} = \beta + \frac{\lambda_1}{2}.
\end{align*}
Since $-\alpha \leq \lambda_1 \leq \alpha$, then $s_2^{(2)} \geq \beta/2 -\lambda_2' \geq \beta/4$. This finishes the proof.

\end{proof}

\begin{lem}
Suppose that $\varphi_1$ has combinatorial type $(r, t)$ with $t \geq 2$. Assume that $\varphi_1$ has a normalized separation symbol $\Sigma^{(1)} = (s_1^{(1)},s_2^{(1)},s_3^{(1)},s_4^{(1)})$ with norm $\beta$ and corrections $\lambda_1$ and $\lambda_2$. If $t > r$, then $\varphi_2$ has a normalized separation symbol $\Sigma^{(2)}$ with norm $\beta$. Moreover, if $U_1^1$ and $V_1^1$, respectively, are separated from the boundary of $U_0^0$ and $V_0^0$ by annuli with modulus at least $L$, then $\varphi_2$ has a normalized separation symbol $\Sigma^{(2)}$ with norm $\beta+ {L}/{4}$.
\end{lem}

\begin{proof}

We may assume that $\varphi_1$ is of type $\mathcal C$. Then $\varphi_1 : U_0^1 \to V_0^0$ and $\varphi_1 : V_0^1 \to U_0^0$ are degree 2 branched covering, while $\varphi_1 : U_1^1 \to U_0^0$ and $\varphi_1 : V_1^1 \to V_0^0$ are univalent. Let $A_c$ and $A_d$ be the annuli separating $U_1^1$ and $V_1^1$ from the boundary of $U_0^0$ and $V_0^0$. Set $\psi = \varphi_1 | U_0^1$. Set $h = (\varphi_1|V_1^1)^{r-1}$ and $\tilde h = (\varphi_1|V_1^1)^{t-1}$.

Since $r, t \geq 2$, $\varphi_1( U_0^2 \cup U_1^2) \subseteq V_P^1 \subseteq V_1^1$. We first pullback $V_0^1$ and $V_1^1$ under $\varphi_1|V_1^1$. Let $\tilde W$ be the topological disk contained in $V_1^1$ such that $h : \tilde W \to V_0^0$ is univalent. Then $\tilde W \subset V_P^1$. Let $W \subset \tilde W$ be the preimage of $V_0^1$ under $h$, let $Z \subset \tilde W$ be the preimage of $V_1^1$ under $h$. Then $W \cap Z = \emptyset$. Moreover, $\psi(U_0^2) = W$ and $\psi(U_1^2) \subsetneqq Z$.

The separating annuli $A_i$, $1 \leq i \leq 4$, are taken as:
\begin{align*}
A_2^{(2)} & = \psi^{-1} \circ h^{-1} (B_2^{(1)})\\
A_1^{(2)} &=  \psi^{-1} \circ h^{-1} (B_1^{(1)}) \oplus \psi^{-1} (A_d)\\
A_3^{(2)} & = \psi^{-1}( h^{-1} (B_3^{(1)}) \oplus (Z \setminus \psi(U_1^2))) \\
A_4^{(2)} & = \psi^{-1} \circ  h^{-1} (B_4^{(1)}).
\end{align*}
Since $B_2^{(1)}$ is surrounding $V_0^1$ with $B_2^{(1)} \cap V_1^1 = \emptyset$, then $h^{-1}(B_2^{(1)})$ is surrounding $W$ and is disjoint with $Z$. Since $h : \tilde W \to V_0^0$ is univalent and onto, $h^{-1}(B_1^{(1)})$ is the intersection of $\tilde W$ and the unbounded component of the complement of the closure of $h^{-1}(B_2^{(1)})$. Also, $h^{-1}(B_1^{(1)})$ is contained in the bounded component of the complement of $\overline A_d$ (if exists). Similarly, $h^{-1}(B_3^{(1)})$ is surrounding $Z$ with $h^{-1}(B_3^{(1)}) \cap W = \emptyset$. Then after pullback, $A_2^{(2)}$ is surrounding $U_0^2$ which is disjoint with $U_1^2$ and such that $h \circ \psi: A_2^{(2)} \to B_2^{(1)}$ is degree 2 branched covering. Since the annulus $h^{-1}(B_3^{(1)})$ is not surrounding the critical value $\psi(c)$, $\psi:A_3^{(2)} \to \psi^{-1}( h^{-1} (B_3^{(1)}) \oplus (Z \setminus \psi(U_1^2))$ is univalent. On the other hand, $h \circ \psi :A_4^{(2)} \to B_4^{(1)}$ is holomorphic while $A_4^{(2)}$ contains a critical point. Finally, by pullback the annulus $V_0^0 \setminus V_0^1$ under univalent branch $\tilde h$, we obtain an annulus contained in $Z$ and surrounds $\psi(U_1^2)$ with modulus $\m(V_0^0 \setminus V_0^1) = \m B_5^{(1)}$. Then $\m (Z \setminus \psi(U_1^2)) \geq \m B_5^{(1)}$.

Therefore,
\begin{align*}
\m A_2^{(2)} & = \frac{1}{2} \m B_2^{(1)} \geq \frac{1}{2} s_2^{(1)}  \\
\m A_1^{(2)} & \geq \frac{1}{2} (\m B_1^{(1)} + L ) \geq \frac{1}{2} (\omega_1^{(1)} + L )\\
\m A_3^{(2)} & \geq \m B_3^{(1)} + \m B_5^{(1)} \geq s_3^{(1)}\\
\m A_4^{(2)} & \geq \frac{1}{2} \m B_4^{(1)} \geq \frac{1}{2} \omega_4^{(1)}.
\end{align*}
The annulus $A_5^{(2)}$ is conformally equivalent to $V_0^1 \setminus V_0^2$, hence its modulus is at least $s_1^{(2)}$. Construct $B_i^{(2)}$ in the same way, $1 \leq i \leq 5$. The separation symbol goes:
\begin{align*}
s_1^{(2)} & = \frac{\alpha+\lambda_1}{2} + \frac{L}{2}    \\
s_2^{(2)} & = \frac{\alpha-\lambda_2}{2}\\
s_3^{(2)} & = \beta + \frac{\alpha-\lambda_1+L}{2}\\
s_4^{(2)} & = \beta + \frac{\alpha+\lambda_2+L}{2}.
\end{align*}

Now let $\lambda_1'=\frac{\lambda_1-\alpha}{2}$, $\lambda_2'=\frac{\lambda_2+\alpha}{2}$. These estimates yield a normalized separation symbol with norm $\beta$ when $L=0$. By Lemma 4.3, the presence of these extra terms allows to lift the norm $\beta$ to $\beta + \frac{L}{4}$.

\end{proof}

\begin{lem}
Suppose that $\varphi_1$ has combinatorial type $(r, t)$ with $t \geq 2$. Assume that $\varphi_1$ has a normalized separation symbol $\Sigma^{(1)} = (s_1^{(1)},s_2^{(1)},s_3^{(1)},s_4^{(1)})$ with norm $\beta$ and corrections $\lambda_1$ and $\lambda_2$. If $t < r$, then $\varphi_2$ has a normalized separation symbol $\Sigma^{(2)}$ with norm $\beta$. Moreover, if $U_1^1$ and $V_1^1$, respectively, are separated from the boundary of $U_0^0$ and $V_0^0$ by annuli with modulus at least $L$, then $\varphi_2$ has a normalized separation symbol $\Sigma^{(2)}$ with norm $\beta+ {L}/{4}$.
\end{lem}

\begin{proof}

This case is similar  to the case covered by Lemma 4.4. For simplicity, we still assume that $\varphi_1$ is of type $\mathcal C$. Let $A_c$ and $A_d$ be the annuli separating $U_1^1$ and $V_1^1$ from the boundary of $U_0^0$ and $V_0^0$. Set $\psi = \varphi_1 | U_0^1$. Set $h = (\varphi_1|V_1^1)^{r-1}$ and $\tilde h = (\varphi_1|V_1^1)^{t-1}$.

Let $\tilde W$ be the topological disk contained in $V_1^1$ such that $\tilde h : \tilde W \to V_0^0$ is univalent. Then $\tilde W \subset V_P^1$. Let $W \subset \tilde W$ be the preimage of $V_0^1$ under $\tilde h$, let $Z \subset \tilde W$ be the preimage of $V_1^1$ under $\tilde h$. Then $W \cap Z = \emptyset$. Moreover, $\psi(U_1^2) = W$ and $\psi(U_0^2) \subsetneqq Z$.

The separating annuli $A_i$, $1 \leq i \leq 4$, are taken as:
\begin{align*}
A_2^{(2)} & = \psi^{-1} ( \tilde {h}^{-1} (B_3^{(1)}) \oplus (Z \setminus \psi(U_0^2)) )\\
A_1^{(2)} & = \psi^{-1} \circ \tilde {h}^{-1} (B_4^{(1)}) \oplus \psi^{-1} (A_d)\\
A_3^{(2)} & = \psi^{-1} \circ \tilde {h}^{-1} (B_2^{(1)} )\\
A_4^{(2)} & = \psi^{-1} \circ \tilde {h}^{-1} (B_1^{(1)}).
\end{align*}
Then $\psi : A_2^{(2)} \to  \tilde {h}^{-1} (B_3^{(1)}) \oplus (Z \setminus \psi(U_0^2))$ is degree 2 branched covering. Since $\tilde {h}^{-1} (B_3^{(1)})$ is disjoint with $W$, $A_2^{(2)}$ is disjoint with $U_1^2$. Also, we have $(Z \setminus \psi(U_0^2)) \geq \m(V_0^0 \setminus V_0^1) = \m B_5^{(1)}$. Since $\tilde {h}^{-1} (B_2^{(1)} )$ is not surrounding the critical value $\psi(c)$, $\tilde h \circ \psi: A_3^{(2)} \to B_2^{(1)}$ is univalent while $ A_4^{(2)}$ contains a critical point. Therefore,
\begin{align*} 
\m A_2^{(2)} & \geq \frac{1}{2}(\m B_3^{(1)} + \m B_5^{(1)}) \\
\m A_2^{(1)} & \geq \frac{1}{2}\m B_4^{(1)} + \frac{L}{2} \\
\m A_3^{(2)} & = \m B_2^{(1)} \\
\m A_4^{(2)} & \geq  \frac{1}{2}\m B_1^{(1)}.
\end{align*}
Construct $B_i^{(2)}$, $1 \leq i \leq 4$ in the same way. Then argue as in the proof of Lemma 4.4, this implies the existence of a normalized separation symbol with norm $\beta$ when $L=0$. The existence of $A_c $ and $A_d$ give an extra growth of $L/2$ in $s_1^{(2)}$, hence in $s_3^{(2)}$ and $s_4^{(2)}$. By Lemma 4.3, the norm can be lifted to $\beta+L/4$.
\end{proof}

\begin{prop}[Complex a priori bounds]
Suppose $f \in \mathscr C$ with $\mu_1 \geq \tau$. Then there exists a constant $\delta = \delta(\tau) >0$ such that $\mu_n \geq \delta$ for every $n \geq 1$.
\end{prop}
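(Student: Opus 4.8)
The strategy is to carry a \emph{normalized separation symbol} up the twin principal nest of $f$, starting from the hypothesis $\mu_1\ge\tau$.

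\textbf{Base case.} For the box mapping $\phi_1:U^1\cup L^1\cup V^1\cup R^1\to U^0\cup V^0$ the non-central branches $\phi_1|L^1$ and $\phi_1|R^1$ are univalent onto $U^0$ or $V^0$, so the annuli $A_5^{(1)}=L^1\setminus\overline{P_c^1}$ and $B_5^{(1)}=R^1\setminus\overline{Q_d^1}$ are conformal images, under these univalent branches, of the fundamental annuli $U^0\setminus\overline{U^1}$ or $V^0\setminus\overline{V^1}$; hence $\m A_5^{(1)},\m B_5^{(1)}\ge\mu_1\ge\tau$, i.e.\ $\omega_5^{(1)}\ge\tau$. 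Degenerating the remaining eight separating annuli of $\phi_1$ to Jordan curves and putting $s_1=s_2=0$, $s_3=s_4=\tfrac32\beta_0$ with $\beta_0:=\tfrac23\tau$ (so that $\lambda_1=-\tfrac{\beta_0}{2}$, $\lambda_2=\tfrac{\beta_0}{2}$, which is admissible since $\lambda_1+\lambda_2=0$ and $s_3,s_4\le\omega_5^{(1)}$) gives a normalized separation symbol $\Sigma^{(1)}$ for $\phi_1$ with norm $\beta_0=\beta_0(\tau)>0$.

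\textbf{Induction.} By Lemma 2.1 every $\phi_n$ has type $\mathcal A$, $\mathcal B$ or $\mathcal C$, with combinatorial type $(r_n,t_n)$, $r_n\ge2$, $t_n\ge1$. Assume $\phi_n$ carries a normalized separation symbol of norm $\beta\ge\beta_0$. If $t_n=1$, Lemma 4.4 produces a normalized separation symbol for $\phi_{n+1}$ of the same norm $\beta$ (corrections halved and swapped); if $t_n\ge2$, Lemma 4.5 produces one of norm $\ge\beta$, in fact of norm $\ge\beta+\tfrac L4$ whenever $L^n,R^n$ are separated from $\partial U^{n-1},\partial V^{n-1}$ by annuli of modulus at least $L$. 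So, inductively, $\phi_n$ carries a normalized separation symbol $\Sigma^{(n)}=(s_1^{(n)},\dots,s_4^{(n)})$ of norm $\beta_n\ge\beta_0$ for every $n\ge1$.

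\textbf{Extraction.} It remains to bound $\mu_n=\min\{\m A^n,\m B^n\}$ from below for $n\ge2$. The annulus $A_2^{(n)}$ lies in $U^{n-1}$ and surrounds $U^n$, hence is an essential subannulus of $A^n=U^{n-1}\setminus U^n$ and $\m A^n\ge\m A_2^{(n)}$; likewise $\m B^n\ge\m B_2^{(n)}$. Tracing the constructions in the proofs of Lemmas 4.4 and 4.5 one finds $\m A_2^{(n)},\m B_2^{(n)}\ge\tfrac12 s_3^{(n-1)}$, and since $s_3^{(n-1)}=\beta_{n-1}-\lambda_1^{(n-1)}\ge\beta_{n-1}-\alpha_{n-1}=\tfrac12\beta_{n-1}$ this yields $\m A^n,\m B^n\ge\tfrac14\beta_{n-1}\ge\tfrac14\beta_0$ -- except in the low-return configuration (Case i of Lemma 4.5, where $\phi_{n-1}$ is of type $\mathcal C$ with $r_{n-1},t_{n-1}$ both even and $t_{n-1}>r_{n-1}$), in which the naive estimate only gives $\tfrac12 s_2^{(n-1)}$. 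In that exceptional case one uses that, by the Admissibility condition $A$ together with Lemmas 2.1--2.5, type $\mathcal C$ (hence a Case i step) cannot occur at two consecutive levels, and that the type $\mathcal B$ forced around such returns supplies a definite modulus $L>0$ which, via the boost $\beta\mapsto\beta+\tfrac L4$ of Lemma 4.5, keeps $\m A^n,\m B^n$ uniformly positive. Combining with $\mu_1\ge\tau$ gives $\mu_n\ge\delta(\tau):=\min\{\tau,\tfrac14\beta_0,\delta'(\tau)\}>0$ for all $n$.

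The main obstacle is the Extraction step. Persistence of the separation-symbol norm alone does not suffice, because the correction parameters can in principle drift toward the degenerate corner $(\lambda_1,\lambda_2)=(-\alpha,\alpha)$, where $s_1=s_2=0$ and the subannulus estimates above collapse. Excluding this is precisely where the rigidity of admissible combinatorics from Section 2 — no type $\mathcal D$, no two consecutive low returns, the forced appearance of type $\mathcal B$ — must be fed into the analytic machinery of Lemmas 4.4--4.5; the conformal-roughness estimates by themselves do not see the combinatorics and cannot prevent the degeneration.
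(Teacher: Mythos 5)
Your base step and propagation step are exactly the paper's: the paper also degenerates some of the separating annuli of $\phi_1$, uses $\omega_5^{(1)}\ge\mu_1\ge\tau$ to normalize to norm $\tau/2$, and then quotes Lemmas 4.4 and 4.5 to carry a normalized symbol of that norm to every level. The extraction is also the paper's in spirit, except that the paper uses the larger annulus $A_1^{(n)}\oplus A_2^{(n)}\subset U^{n-1}\setminus\overline{U^{n}}$ and the bound $\mu_n\ge\tfrac12 s_4^{(n-1)}\ge\tfrac12(\beta-\alpha)=\beta/4$, rather than your $A_2^{(n)}$ and $\tfrac12 s_3^{(n-1)}$; both versions are fine in Lemma 4.4 and in Case ii of Lemma 4.5, and both suffer from the same exception in Case i. So you have correctly located the only delicate point, which the paper's four-line proof passes over in silence.

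The gap is in your patch for that exception. First, Case i of Lemma 4.5 ($t>r$) is not confined to type $\mathcal C$ with $r,t$ both even: by Lemma 2.3 and Admissibility condition $A$ it also occurs for $\mathcal A^+$ and $\mathcal B^+$ with $r$ even and $t$ odd, $t>r$, and for $\mathcal B^+$ with both even. Hence ``type $\mathcal C$ cannot occur at two consecutive levels'' does not forbid two consecutive Case-i steps; indeed $\mathcal C^-$ (both even, $t>r$) followed by $\mathcal A^+$ ($r$ even, $t$ odd, $t>r$) is admissible. Second, the ``definite modulus $L>0$ supplied by type $\mathcal B$'' is never produced: the $L$ of Lemma 4.5 is the separation $\nu$ of the non-central domains from the box boundaries, and Lemma 4.6 controls it only in terms of $\mu$ of the previous map — circular for the purpose of proving this proposition. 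The repair is arithmetic, not conformal: in Lemma 4.4 and in Case ii of Lemma 4.5 the constructed annuli give $s_1^{(\mathrm{new})}\ge\alpha+\lambda_2^{(\mathrm{old})}/2\ge\alpha/2$, a Case-i step gives $s_1^{(\mathrm{new})}=\tfrac12 s_1^{(\mathrm{old})}+\tfrac{L}{2}\ge\tfrac12 s_1^{(\mathrm{old})}$, and Admissibility condition $A$ forbids three consecutive Case-i steps (a $t>r$ step with $e(r,t)=0$ is followed by $t<r$; the $e(r,t)=2$ cases lead to $\mathcal A^+$, which if again Case i has $e=0$ and is then followed by $\mathcal B^-$ with $t<r$). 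Hence $s_1^{(n)}\ge\alpha/8$ and $\mu_n\ge \m\bigl(A_1^{(n)}\oplus A_2^{(n)}\bigr)\ge s_1^{(n)}\ge\alpha/8$ for all $n\ge2$. Note finally that your base choice $\lambda_1^{(1)}=-\alpha$ puts $s_1^{(1)}=0$, so if the very first inducing step happens to be Case i this recursion starts from zero; the paper's choice $\lambda_1^{(1)}=0$ avoids this, and you should start away from the degenerate corner you yourself warn about.
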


\begin{proof}
From the beginning, consider the map $\phi_1$. The domain of $\phi_1$ consists of four $\mathbb R-$symmetric topological disks. Take $A_2^{(1)},B_2^{(1)},A_3^{(1)},B_3^{(1)}$ degenerate, then we obtain a separation symbol $(\tau, 0, \tau, \tau)$. Let $\beta = \tau/2$, $\lambda_1^{(1)}=0$ and $\lambda_2^{(1)} = \tau/4$. This leads to a normalized separation symbol with norm $\tau/2$. By Lemma 4.4, Lemma 4.5 and Lemma 4.6, each generalized renormalization $\phi_n$ is a complex box mapping which has a normalized separation symbol with norm $\tau/2$ for all $n \geq 1$. Therefore $\mu_n \geq s_4^{(n-1)}/2 \geq \tau/8$ for $n \geq 2$. 
\end{proof}

\subsection{Linear growth of rate} To prove Proposition 3, it suffices to show that the norm $\beta$ has a growth with definite rate $\epsilon>0$ after several steps of inducing. Given any complex box mapping $\phi$, we say that $\phi$ has {\it separation bounds $\delta$} provided that
\[
\min \{ \m(U \setminus U_0), \m (V \setminus V_0)\} \geq \delta \mbox{ and } \min \{ \m(U \setminus U_1), \m (V \setminus V_1)\} \geq \frac{\delta}{2}.
\]

\begin{lem}
Suppose that $\varphi_1$ has a combinatorial type. Assume that $\min \{ \m(U_0^0 \setminus U_0^1), \m (V_0^0 \setminus V_0^1)\} \geq \delta$, then $\min \{ \m(U_0^1 \setminus U_1^2), \m (V_0^1 \setminus V_1^2)\} \geq \delta/2$.
\end{lem}

\begin{proof}
We may assume that $\varphi_1$ is of type $\mathcal C$. If $t=1$, then $\varphi_1 : U_0^1 \setminus U_1^2 \to V_0^0 \setminus V_0^1$ contains a critical point. Hence $\m (U_0^1 \setminus U_1^2) \geq \m (V_0^0 \setminus V_0^1) /2 \geq \delta/2$. If $t \geq 2$, looking into the proof of Lemma 4.5 and Lemma 4.6, then $\varphi_1(U_1^2) \subsetneqq V_1^1$ with $\m (V_1^1 \setminus \varphi_1(U_1^2) ) \geq \m B_5^{(1)} \geq \delta$. We can conclude that $\m(U_0^1 \setminus U_1^2) \geq \delta/2 $. Analogously we have $\m (V_0^1\setminus V_1^2) \geq \delta/2$. This finishes the proof.
\end{proof}

Proposition 4 and Lemma 4.7 together show that for each $n \geq 2$, the $n$-th level generalized renormalization always has a separation bounds for $\delta = \beta/4$.

\begin{cor}
Suppose that $\varphi_1$ has combinatorial type $(r, t)$ with $t \geq 2$. Assume that $\varphi_1$ has a normalized separation symbol with norm $\beta$ and separation bounds $\delta$. Then $\varphi_2$ has a normalized separation symbol with norm $\beta+ \delta/8$.
\end{cor}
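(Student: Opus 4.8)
The plan is to read this corollary off from the second assertion of Lemma 4.5, which promotes the norm of the separation symbol from $\beta$ to $\beta+L/4$ as soon as $U_1^1$ and $V_1^1$ are separated from $\partial U_c^0$ and $\partial V_d^0$ by annuli of modulus at least $L$. Thus the only point to verify is that here one may take $L=\delta/2$.

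First I would invoke the standing hypothesis of this subsection: $\varphi_1$ is induced from $\varphi_-$, which carries a normalized separation symbol of norm $\beta$ and separation bounds $\delta=\beta/4$; by Proposition 4 together with Lemma 4.6 the map $\varphi_1$ itself then has separation bounds $\delta$, in particular $\nu(\varphi_1)\ge \delta/2$. By the definition of $\nu$ this is exactly the pair of estimates $\m(U_c^0\setminus U_1^1)\ge \delta/2$ and $\m(V_d^0\setminus V_1^1)\ge \delta/2$. Since $U_1^1\Subset U_c^0$ and $V_1^1\Subset V_d^0$ are topological disks sitting inside topological disks, the sets $A_c:=U_c^0\setminus\overline{U_1^1}$ and $A_d:=V_d^0\setminus\overline{V_1^1}$ are genuine topological annuli separating $U_1^1$, $V_1^1$ from $\partial U_c^0$, $\partial V_d^0$, with $\m A_c,\ \m A_d\ge \delta/2$ (closure does not change the modulus).

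Next, since we are in the regime $t\ge 2$, which is precisely where Lemma 4.5 (rather than Lemma 4.4) applies, the hypothesis of the second part of Lemma 4.5 is met with $L=\delta/2$, and its conclusion produces a normalized separation symbol for $\varphi_2$ with norm $\beta+L/4=\beta+\delta/8$, as claimed.

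I do not expect any real obstacle here: all the work is already contained in Lemma 4.5, where the pull-back annuli $A_i^{(2)}$, $B_i^{(2)}$ are built in Cases i and ii and the summands $\psi^{-1}(A_d)$ (resp. $\psi^{-1}(A_c)$) are exactly what yields the extra growth, which Lemma 4.3 then converts into a gain of $L/4$ in the norm. The corollary is just the observation that the ``slack'' $L$ demanded there is already available and equals the separation bound $\delta/2$ propagated along the twin principal nest; the only thing needing a word of care is to identify the annuli $A_c,A_d$ above with the auxiliary annuli bearing the same names in Lemma 4.5, which is immediate from their definitions.
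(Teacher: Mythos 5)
Your argument is correct and is exactly the paper's proof: the authors simply say the corollary is ``immediate from Lemma 4.5 and the assumption,'' where the standing assumption of the subsection gives $\nu(\varphi_1)\ge\delta/2$, i.e.\ the annuli $U_c^0\setminus\overline{U_1^1}$ and $V_d^0\setminus\overline{V_1^1}$ of modulus at least $L=\delta/2$ required by the second part of Lemma 4.5, whose conclusion then yields the norm $\beta+L/4=\beta+\delta/8$. You have merely spelled out the same two steps in more detail.
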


\begin{proof}
This corollary follows from the second claim of Lemma 4.5 and Lemma 4.6 by taken $L = \delta /2$. 
\end{proof}

\begin{lem}
Suppose that $\varphi_1$ has combinatorial type $(r, t)$ with $r \geq 3$ and $t=1$. Assume that $\varphi_1$ has a normalized separation symbol with norm $\beta$ and separation bounds $\delta$. Then we can choose the separating annuli $A_2^{(2)}$ and $B_2^{(2)}$ for $\varphi_2$ so that
\[ \min\{ \m A_2^{(2)}, \m B_2^{(2)} \} \geq s_2^{(2)} + \frac{\delta}{4},
\]
where $s_2^{(2)}$ is given in Lemma 4.4.
\end{lem}

\begin{proof}
We may assume that $\varphi_1$ is of type $\mathcal C$. Looking into the proof of Lemma 4.4, $A_2^{(2)}$ is taken as $\psi^{-1}(B_3^{(1)} \oplus (V_1^1 \setminus \psi(U_0^2))$. Since $r \geq 3$, let $W_1 = (\varphi_1 |V_1^1)^{-1} (V_1^1)$ and $W_2 = (\varphi_1 |V_1^1)^{-(r-2)} (V_1^1)$. Then $\m (V_1^1 \setminus W_1) = \m (V_0^1 \setminus V_1^1) \geq \delta /2$ and $\m (W_2 \setminus \psi(U_0^2)) = \m B_5^{(1)}$. Therefore,

\begin{align*}
\m A_2^{(2)} & \geq \frac{1}{2} [  \m B_3^{(1)} + \m (V_1^1 \setminus W_1) +  \m (W_2 \setminus \psi(U_0^2))] \\
& \geq \frac{1}{2} [  \m B_3^{(1)} + \m B_5^{(1)} + \frac{\delta}{2} ]\\
& \geq  s_2^{(2)} + \frac{\delta}{4}.
\end{align*}
The same holds for $\m B_2^{(2)}$. This finishes the proof.
\end{proof}

\begin{lem}
Suppose that $\varphi_1$ has combinatorial type $(r, t)$ with $r \geq 3$ and $t=1$. Assume that $\varphi_1$ has a normalized separation symbol with norm $\beta$ and separation bounds $\delta$. Then $\varphi_2$ has a normalized separation symbol with norm $\beta+ \delta/8$.
\end{lem}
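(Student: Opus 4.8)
The plan is to upgrade the separation symbol for $\varphi_2$ produced by Lemma 4.4 using the extra modulus supplied by Lemma 4.8, and then to interpolate with Lemma 4.3 — the same scheme by which the auxiliary annuli $A_c,A_d$ are exploited in Lemma 4.5 for the range $t\geq 2$.

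First I would recall the output of Lemma 4.4 (available since $t=1$): a normalized separation symbol $\Sigma^{(2)}=(s_1^{(2)},s_2^{(2)},s_3^{(2)},s_4^{(2)})$ for $\varphi_2$ with norm $\beta$, realized by explicit separating annuli $A_i^{(2)},B_i^{(2)}$. In that construction the annulus $A_2^{(2)}=\psi^{-1}(B_3^{(1)}\oplus(W_0\setminus Z_{r-1}))$, $\psi=\varphi_1|U_c^1$, enters two estimates: $\omega_2^{(2)}\geq\frac12 s_3^{(1)}$, which fixes $s_2^{(2)}$, and, together with $A_1^{(2)}$, $\omega_1^{(2)}+\omega_2^{(2)}\geq\frac12 s_4^{(1)}=s_1^{(2)}$; and $s_1^{(2)}$ is in turn what drives the bounds $s_3^{(2)}\geq s_2^{(1)}+s_1^{(2)}$ and $s_4^{(2)}\geq s_1^{(1)}+s_1^{(2)}$, via the moduli of $A_5^{(2)}\cong V_d^1\setminus V_d^2$ and of the mirror $B_5^{(2)}$.

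Next I would replay this construction with the sharpening of Lemma 4.8 in force. Because $r\geq 3$, the pullback $W_0\setminus Z_{r-1}$ contains — besides the copy $W_{r-2}\setminus Z_{r-1}$ of $B_5^{(1)}$ — at least one further sub-annulus $W_0\setminus W_1$, of modulus $\m(V_d^0\setminus V_1^1)\geq\delta/2$; hence $\min\{\m A_2^{(2)},\m B_2^{(2)}\}$ exceeds its nominal value by $\frac{\delta}{4}$. This surplus should then propagate through the proof of Lemma 4.4 exactly as the term $L/2$ propagates in the proof of Lemma 4.5: it passes from $\omega_2^{(2)}$ to $\omega_1^{(2)}+\omega_2^{(2)}$, hence to $s_1^{(2)}$, and thence — via the moduli of $A_5^{(2)}$ and $B_5^{(2)}$, which are conformally copies of the mirror pullback $V_d^1\setminus V_d^2$, enlarged by the same mechanism for $r\geq 3$ — to $s_3^{(2)}$ and $s_4^{(2)}$, while leaving $s_2^{(2)}$ untouched. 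Consequently $\varphi_2$ would also admit the separation symbol $(s_1^{(2)},s_2^{(2)},s_3^{(2)}+\frac{\delta}{4},s_4^{(2)}+\frac{\delta}{4})$, using the original annuli for the first two entries and the enlarged ones for the rest; by Lemma 4.7 the corrections of $\Sigma^{(2)}$ stay in the admissible range throughout.

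Finally I would invoke Lemma 4.3 with $\epsilon=\frac{\delta}{4}$: the normalized symbol $\Sigma^{(2)}$ of norm $\beta$, together with the modified symbol in which $s_3$ and $s_4$ are raised by $\epsilon$, yield a normalized separation symbol for $\varphi_2$ of norm $\beta+\frac{\epsilon}{2}=\beta+\frac{\delta}{8}$, which is the assertion. The step I expect to be the main obstacle is the propagation in the preceding paragraph: one must confirm that the $\frac{\delta}{4}$ gained in $A_2^{(2)},B_2^{(2)}$ genuinely reappears in $A_5^{(2)},B_5^{(2)}$, i.e.\ that the mirror pullback defining $V_d^2$ acquires a spare fundamental annulus of modulus $\geq\delta/2$ before the halving by the degree-$2$ branch $\varphi_1|V_d^1$, and that none of this surplus is absorbed in the $\oplus$-operations. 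This is the $t=1$ analogue of the $L$-propagation carried out in Lemma 4.5, and it is the only place where the hypothesis $r\geq 3$ (rather than merely $r\geq 2$) is used.
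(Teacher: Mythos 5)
Your proposal follows the paper's own argument: the extra modulus of $\min\{\m A_2^{(2)},\m B_2^{(2)}\}$ coming from the spare fundamental annulus $W_0\setminus W_1$ when $r\geq 3$ (the content of Lemma 4.8) is propagated through the construction of Lemma 4.4 — raising $s_1^{(2)}$ by $\delta/4$ and hence, via $A_5^{(2)}$ and $B_5^{(2)}$, raising $s_3^{(2)}$ and $s_4^{(2)}$ by $\delta/4$ — after which Lemma 4.3 lifts the norm to $\beta+\delta/8$. The only cosmetic difference is that the paper also records the $\delta/4$ gain on $s_2^{(2)}$ before invoking Lemma 4.3, which is immaterial; your propagation step, including the symmetric enlargement of $V_d^1\setminus V_d^2$, is exactly what the paper's proof relies on.
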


\begin{proof}
By Lemma 4.9 the separation symbol $s_2^{(2)}$ increases with rate $\delta/4$. Then $s_1^{(2)}$ can be increased by $\delta/4$, hence $s_3^{(2)}$ and $s_4^{(2)}$ can be increased by $\delta/4$. In this way we build a normalized separation symbol $\tilde \Sigma = (s_1^{(2)} + \delta/4, s_2^{(2)}+\delta/4, s_3^{(2)} + \delta/4,s_4^{(2)} + \delta/4)$. By Lemma 4.3, the norm can be lifted to $\beta + \delta/8$.
\end{proof}

If $\varphi_i$ has combinatorial type $(2,1)$, then we say that $\varphi_i$ is of Fibonacci type. The rest of this subsection is occupied for Fibonacci returns.

\begin{lem}
Suppose that $\varphi_1$ is induced from $\varphi_{-}$ such that both $\varphi_{-}$ and $\varphi_1$ have a normalized separation symbol with norm $\beta$ and separation bounds $\delta$. Suppose that $\varphi_1$ is of Fibonacci type and of type $\mathcal B$. Assume that $\varphi_1|U_0^1$ and $\varphi_1|V_0^1$ has the form $h_1 \circ (z-c)^2$ and $h_2 \circ (z-d)^2$. Then for every $M, \delta>0$, there exist $\eta_1,\eta_2>0$ such that the following alternative holds:
\begin{itemize}
\item $\m (U_0^0 \setminus U_P^1) \geq s_3^{(1)} + \eta_1 $;
\item $\partial U_0^1$ is $(M,\eta_2)$-rough.
\end{itemize}
The same holds for $\m (V_0^0 \setminus V_P^1)$ and $\partial V_0^1$.
\end{lem}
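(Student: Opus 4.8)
\emph{Proof proposal.}

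\noindent\textbf{Combinatorial set‑up.} Since $\varphi_1$ has combinatorial type $(2,1)$, the orbit of $c$ first returns to $U_c^1\cup V_d^1$ at time $2$, and for type $\mathcal B$ one has $\varphi_1(U_1^1)=V^0$, $\varphi_1(V_1^1)=U^0$, $\varphi_1(U_c^1)=U^0$ and $\varphi_1(V_d^1)=V^0$. As $\varphi_1(c)$ and $\varphi_1(d)$ lie in different boxes, after exchanging the two sides if necessary we may assume $\varphi_1(c)\in U_1^1$; then $\varphi_1^2(c)\in\varphi_1(U_1^1)\cap(U_c^1\cup V_d^1)=V_d^1$, so $\varphi_1(c)\in(\varphi_1|U_1^1)^{-1}(V_d^1)=P_c^1\subset U_1^1$. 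Put $U_c^2:=(\varphi_1|U_c^1)^{-1}(P_c^1)$; this is a topological disk containing $c$ (as $P_c^1$ contains the critical value $\varphi_1(c)$), and since $\varphi_1|U_c^1=h_1\circ(z-c)^2$ with $h_1$ univalent, the curves $\partial U_c^1=(\varphi_1|U_c^1)^{-1}(\partial U^0)$ and $\partial U_c^2$ are invariant under $z\mapsto 2c-z$, i.e.\ under the rotation by $2\pi/2$ about $c$. Finally, the defining property of $A_3^{(1)}$ gives $U_c^1\subset D_\infty(A_3^{(1)})\cap U^0=A_4^{(1)}$.

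\noindent\textbf{The modulus estimate.} Because $P_c^1\subset U_1^1\subset U^0$, the annuli $U_1^1\setminus\overline{P_c^1}$ and $U^0\setminus\overline{U_1^1}$ are disjoint coaxial essential sub‑annuli of $U^0\setminus\overline{P_c^1}$ sharing the curve $\partial U_1^1$, and super‑additivity gives
\[
\m\bigl(U_c^0\setminus P_c^1\bigr)\ \geq\ \m\bigl(U^0\setminus\overline{U_1^1}\bigr)+\m\bigl(U_1^1\setminus\overline{P_c^1}\bigr),
\]
with excess at least $\rho$ if $\partial U_1^1$ is $(M',\rho)$‑rough and both moduli exceed $M'$. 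Here $\m(U_1^1\setminus\overline{P_c^1})=\m A_5^{(1)}\geq\omega_5^{(1)}$, while $A_3^{(1)},A_4^{(1)}$ are disjoint coaxial essential sub‑annuli of $U^0\setminus\overline{U_1^1}$, so $\m(U^0\setminus\overline{U_1^1})\geq\m A_3^{(1)}+\m A_4^{(1)}\geq\omega_3^{(1)}+\m A_4^{(1)}$. Since $\Sigma^{(1)}$ is a separation symbol, $s_3^{(1)}\leq\omega_3^{(1)}+\omega_5^{(1)}$, whence
\[
\m\bigl(U_c^0\setminus P_c^1\bigr)\ \geq\ s_3^{(1)}+\m A_4^{(1)}\qquad(\text{plus a further excess at }\partial U_1^1).
\]
Moreover $\varphi_1(U_c^1)=U^0$ and $\varphi_1(U_c^2)=P_c^1$, so $\varphi_1|U_c^1$ restricts to a degree‑$2$ unbranched covering $U_c^1\setminus\overline{U_c^2}\to U^0\setminus\overline{P_c^1}$, and hence $\m(U_c^0\setminus P_c^1)=2\,\m(U_c^1\setminus\overline{U_c^2})$; this identity is the channel through which geometric information about $\partial U_c^1$ will be pushed to later levels via Fact 4.3.

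\noindent\textbf{The dichotomy.} Fix $M>0$, let $L=L(M)$ and $\epsilon_0=\epsilon_0(M)$ come from Fact 4.2 with $\ell=2$, set $\eta_2:=\epsilon_0$, fix $M'=M'(M)$ as required by Fact 4.3, and then take $\eta_1=\eta_1(M,\delta)>0$ small. If $\m A_4^{(1)}\geq\eta_1$, or if $\partial U_1^1$ is $(M',\eta_1)$‑rough, the inequalities above already yield the first alternative. Otherwise we are in the almost‑sharp regime: $\m A_4^{(1)}<\eta_1$ and the coaxial splitting of $U^0\setminus\overline{P_c^1}$ loses less than $\eta_1$. Apply Fact 4.1 to a proper sub‑annulus $\mathcal U$ of $U^0\setminus\overline{P_c^1}$ with inner boundary $\partial P_c^1$ that separates $P_c^1$ from $U_c^1$ (it contains $A_5^{(1)}\oplus A_3^{(1)}$, so $\m\mathcal U\geq\omega_5^{(1)}+\omega_3^{(1)}\geq s_3^{(1)}$, hence $\m(U_c^0\setminus P_c^1)-\m\mathcal U<\eta_1$): either $\partial U^0$ is $(M',\eta_1)$‑rough — and then Fact 4.3 applied to $\varphi_1|U_c^1=h_1\circ(z-c)^2$ transfers this to $(M,\eta_2)$‑roughness of $\partial U_c^1$ — or $\mathcal U$ is Hausdorff close to $\partial U^0$; combined with $\m A_4^{(1)}<\eta_1$, the latter squeezes the disk $U_c^1$ into a collar of $U^0$ whose width is a tiny fraction of $\diam U^0$, so that $\dist(c,\partial U_c^1)$ is tiny while, by the $z\mapsto 2c-z$ symmetry, $\sup_{\partial U_c^1}|z-c|=\tfrac12\diam U_c^1$; for $\eta_1$ small this forces $\sup_{\partial U_c^1}|z-c|/\inf_{\partial U_c^1}|z-c|\geq L$, and Fact 4.2 gives that $\partial U_c^1$ is $(M,\eta_2)$‑rough. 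The assertions for $\m(V_d^0\setminus Q_d^1)$ and $\partial V_d^1$ follow by the identical argument with $U,c$ replaced by $V,d$ (using $\varphi_1(d)\in Q_d^1$ and $\varphi_1(V_d^1)=V^0$), so one takes $\eta_1$ to be the smaller of the two thresholds.

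\noindent\textbf{Main obstacle.} The hard step is the almost‑sharp regime: turning "no modulus gain" into a \emph{quantitative} $(M,\eta_2)$‑roughness of $\partial U_c^1$ with $\eta_1,\eta_2$ uniform in the depth $n$. This is exactly where the hypotheses of the lemma are indispensable. The type $\mathcal B$ (combinatorial‑symmetry) hypothesis places the post‑critical domain $P_c^1$ \emph{inside the non‑central domain} $U_1^1$, which is what makes the coaxial decomposition of $U^0\setminus\overline{P_c^1}$ — and hence the clean lower bound by $s_3^{(1)}+\m A_4^{(1)}$ — available; for central or low returns the domains $P_c^1,U_1^1,U_c^1$ sit in general position and this splitting fails. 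And the factorisation $\varphi_1|U_c^1=h_1\circ(z-c)^2$ with $h_1$ univalent, together with the complex a priori bounds of Proposition 4 (which supply definite conformal room and bounded distortion for $h_1$ and the other branches, and already confine the relevant moduli to a bounded range once no gain is assumed), is what lets the order‑$2$ rotational symmetry of $\partial U_c^1$ be exploited through Facts 4.2 and 4.3 and what makes the Hausdorff‑degeneration argument go through uniformly.
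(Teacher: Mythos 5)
Your combinatorial set-up is correct ($\varphi_1(c)\in P_c^1\subset U_1^1$, $U_c^2=(\varphi_1|U_c^1)^{-1}(P_c^1)$, and the degree-2 covering $U_c^1\setminus U_c^2\to U_c^0\setminus P_c^1$), as is the estimate $\m(U_c^0\setminus P_c^1)\geq \omega_3^{(1)}+\m A_4^{(1)}+\omega_5^{(1)}\geq s_3^{(1)}+\m A_4^{(1)}$. Your overall architecture — negate the first alternative, apply Fact 4.1, and convert the resulting Hausdorff degeneration into roughness using the symmetry of $h_1\circ(z-c)^2$ via Facts 4.2 and 4.3 — is also the paper's; the one structural difference is that you run Fact 4.1 downstairs, on sub-annuli of $U_c^0\setminus\overline{P_c^1}$, whereas the paper halves moduli through the covering and runs it upstairs on $A_2^{(2)}\subset U_c^1\setminus U_c^2$, whose outer boundary is exactly the curve $\partial U_c^1$ one wants to prove rough. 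That variation is legitimate in principle, but it is where your argument breaks.

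The gap is the final step of the Hausdorff-close branch. From ``$\mathcal U$ is Hausdorff-close to $\partial U_c^0$'' you squeeze $U_c^1$ into a collar of width $\rho''\cdot\diam U_c^0$ and get $\inf_{\partial U_c^1}|z-c|=\dist(c,\partial U_c^1)\leq\rho''\diam U_c^0$; but to conclude $\sup_{\partial U_c^1}|z-c|/\inf_{\partial U_c^1}|z-c|\geq L$ you need $\diam U_c^1\geq 2L\rho''\diam U_c^0$, i.e.\ a \emph{lower} bound on $\diam U_c^1/\diam U_c^0$. No such bound exists: the a priori bounds only bound $\m(U_c^0\setminus U_c^1)$ from below (so $U_c^1$ is \emph{small} in $U_c^0$, and the whole point of the paper is that this ratio decays), and the almost-sharp hypothesis $\m(U_c^0\setminus P_c^1)\leq s_3^{(1)}+\eta_1$ controls the size of $P_c^1$, not of $U_c^1$. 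So Fact 4.2 applied to $\partial U_c^1$ does not fire. The repair — and it is what the paper does — is to use the non-roughness of $\partial U_c^0$ itself: either $\partial U_c^0$ is $(M_0,\epsilon_1)$-rough and Fact 4.3 finishes, or Fact 4.2 (with $\ell=1$, centred at $c$) gives $|z_1-c|/|z_2-c|\leq L$ for all $z_1,z_2\in\partial U_c^0$, so $\partial U_c^0$ lies in a round annulus about $c$ of eccentricity $L$; since $c$ sits in your collar, $\dist(c,\partial U_c^0)\leq\rho''\diam U_c^0$ and hence $\diam U_c^0\leq 2L\rho''\diam U_c^0$, a contradiction once $\rho''<1/(2L)$ — which is how $\eta_1$ must be calibrated. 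Two secondary points: the clause ``otherwise we are in the almost-sharp regime'' must be the explicit negation of the first alternative, since $\m A_4^{(1)}<\eta_1$ alone does not give $\m(U_c^0\setminus P_c^1)-\m\mathcal U<\eta_1$; and the passage from $d_H(\mathcal U,\partial U_c^0)$ small to ``$D_{\infty}(\gamma)\cap U_c^0\supset U_c^1$ is a thin collar'' needs a short topological argument, not just the bound $\m A_4^{(1)}<\eta_1$ (a small-modulus annulus need not be geometrically thin).
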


\begin{proof}
Since $\varphi_1$ is of type $\mathcal B$, $\varphi_1 : U_0^1 \to U_0^0$ is a degree 2 branched covering. Since $\varphi_1$ is of Fibonacci type,  $\varphi_1 : U_0^2 \to U_P^1$ is a degree 2 branched covering. Hence $\varphi_1 : U_0^1 \setminus U_0^2 \to U_0^0 \setminus U_P^1$ is also a degree 2 branched covering. Since $\varphi_1$ is induced from $\varphi_-$, by Lemma 4.2, $\varphi_1$ can be extended to $W_c \supseteq U_0^1$ with $\m(W_c \setminus U_0^1) \geq \delta/2$. Since $\partial W_c$ and $\partial U_0^1$ are both centrally symmetric with respect to $c$, there exists $\rho=\rho(\delta)$ so that $W_c $ contains the set $U_{\rho}: = \{ z \in \mathbb C : dist(z, \partial U_0^1) \leq \rho \cdot \diam \partial U_0^1 \}$.

Now suppose that $\m (U_0^0 \setminus U_P^1) \leq s_3^{(1)} + \eta'$, where $\eta' >0$ is a small parameter to be specified later. The aim is to show that if $\eta'$ is chosen appropriately depending on $M$ and $\delta$, then $\partial U_0^1$ is $(M, \eta'')$-rough.

Firstly, in the view of Fact 4.3, for every $M$, $\delta$ and $\epsilon_1>0$, there exist $M_0(M, \delta)$ and $\epsilon(M, \delta, \epsilon_1)$ positive such that if $\partial U_0^0$ is $(M_0, \epsilon_1)$-rough, then $\partial U_0^1$ is $(M, \epsilon)$-rough. Note that the parameter $\epsilon_1$ is chosen arbitrarily, while the other two are dependent as shown.

By Fact 4.2, for every $M_0>0$, there exist $L$ and $\epsilon_2>0$ so that if $|z_1-c|/|z_2-c| \geq L$ for some $z_1, z_2 \in \partial U_0^0$, then $\partial U_0^0$ is $(M_0, \epsilon_2)$-rough. We set $\epsilon_1:=\epsilon_2$. Then $\epsilon$ depends only on $M$ and $\delta$.

So let us assume that $\partial U_0^0$ is not $(M_0,\epsilon_1)$-rough, which implies for all $z_1,z_2\in \partial U_0^0$, we have 
\[ \frac{|z_1-c|}{|z_2-c|} \leq L.
\]
Let $\lambda_0 \cdot \diam U_0^0$ denote the Hausdorff distance between the connected components of the boundary of $A_4^{(1)}$. By Koebe's theorem, $\varphi_1|U_0^1$ can be composed as $h \circ (z-c)^2$ where $h$ has bounded distortion in terms of $\delta$. Then there exists $\lambda_1$ depending on $\delta$ and $\lambda_0$ such that the Hausdorff distance between the outer boundary of $A_2^{(2)}$ and $\partial U_0^1$ is less than $\lambda_1 \cdot \diam U_0^1$. Since the outer boundary of $A_4^{(1)}$ is centrally symmetric about $c$, hence for every $z \in A_4^{(1)}$ we have
\[ |z-c| \geq (\frac{1}{2L} - \frac{\lambda_0}{2}) \cdot \diam U_0^1 > 0
\]
provided $\lambda_0 < 1/L$. A contradiction since $A_4^{(1)}$ does not separate $c$ from $U_0^0$.

Now we go back to the beginning where we assume that $\m (U_0^0 \setminus U_0^1) \leq s_3^{(1)} + \eta'$. By super-additivity of moduli, we have
\[ \m (U_0^1 \setminus U_0^2) = \m( \varphi_1^{-1}(U_0^0 \setminus U_P^1)) \leq \frac{1}{2} s_3^{(1)} + \frac{1}{2} \eta' \leq \m A_2^{(2)}+ \frac{1}{2} \eta'.
\]
Apply Fact 4.1 in the following sense. Let $W = U_0^1 \setminus U_0^2$ and $U = A_2^{(2)}$. Then $\m W \geq \delta$ and the distance between the two connected components of $W^c$ is at least $\rho_1 \cdot \diam \partial W$ where $\rho_1$ depends only on $\delta$. Therefore, for every $M , \delta$ and $\rho_2>0$ (different with $\rho_1$) there exist $\epsilon_3$ and $\epsilon_4$ so that if 
\[ \m U + \epsilon_3 \geq \m W,
\]
then either
\begin{itemize}
\item $\partial U_0^1$ is $(M,\epsilon_4)$-rough or
\item the Hausdorff distance between the outer boundary of $A_2^{(2)}$ and $\partial U_0^1$ is less that $\rho_2 \cdot \diam U_0^1$.
\end{itemize}

We set $\lambda_0: = 1/(2L)$ and determine the corresponding $\lambda_1$ which depends only on $M$ and $\delta$. Set $\rho_2 := \lambda_1$. Set $\eta' := \epsilon_3$, hence $\eta'>0$ depends only on $M$ and $\delta$. We conclude that $\partial U_0^1$ is $(M,\eta_2)$-rough, where $\eta_2 : = \min \{\epsilon, \epsilon_4 \}$ depends only on $M$ and $\delta$, provided that $\m (U_0^0 \setminus U_P^1) \leq s_3^{(1)} + \eta'$. Finally, set $\eta_1: = \eta'$. This finishes the proof.

\end{proof}

\begin{lem}
Suppose that $\varphi_1$ is induced from $\varphi_{-}$ such that both $\varphi_{-}$ and $\varphi_1$ have a normalized separation symbol with norm $\beta$ and separation bounds $\delta$. Suppose that $\varphi_1$ is of Fibonacci type and of type $\mathcal B$. Assume that $\varphi_1|U_0^1$ and $\varphi_1|V_0^1$ has the form $h_1 \circ (z-c)^2$ and $h_2 \circ (z-d)^2$. Suppose that $s_4^{(1)} - s_3^{(1)} > \eta_1/2$ where $\eta_1$ is given in the previous lemma. Then there exists $\eta>0$ depending only on $\eta_1$ and $\delta$ such that $\varphi_4$ has a normalized separation symbol with norm $\beta+\eta$.
\end{lem}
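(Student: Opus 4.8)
The plan is to feed the dichotomy from the previous lemma into the inducing machinery of Lemmas 4.4 and 4.5 and to extract from it a definite increment of the norm within the three inducing steps separating $\varphi_1$ from $\varphi_4$; the number three reflects the combinatorial cycle $\mathcal{B}\to\mathcal{C}\to\mathcal{A}\to\mathcal{B}$ attached to consecutive Fibonacci returns through a type $\mathcal{B}$ map (Lemmas 2.4 and 2.5). I would first fix once and for all a large constant $M=M(\delta)$, chosen so that the conformal roughness machinery of Facts 4.1--4.3 survives the degree-$2$ pullbacks $\varphi_1|U_c^1=h_1\circ(z-c)^2$ and $\varphi_1^2|U_c^2$ with room to spare, and then apply the previous lemma on the $c$-side and on the $d$-side separately. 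On each side one of two alternatives holds; since the norm of a separation symbol is governed by the minimum of the moduli on the two sides, the final constant $\eta$ will be the smaller of the two per-side gains produced below, which is still positive.

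\emph{Modulus alternative on a side.} Say $\m(U_c^0\setminus\overline{P_c^1})\ge s_3^{(1)}+\eta_1$ on the $c$-side. Since $\varphi_1(c)\in P_c^1$ (the Fibonacci type $\mathcal{B}$ pattern), one has $U_c^2=(\varphi_1|U_c^1)^{-1}(P_c^1)$, so $U_c^1\setminus\overline{U_c^2}$ is a degree-$2$ preimage of $U_c^0\setminus\overline{P_c^1}$ and hence has modulus at least $\tfrac12(s_3^{(1)}+\eta_1)$, i.e. a surplus $\tfrac{\eta_1}{2}$ over $\tfrac12 s_3^{(1)}$. Re-running the construction in the proof of Lemma 4.4 (with $r=2,\ t=1$) with this sharpened bound in place of the bare one, and using the hypothesis $s_4^{(1)}-s_3^{(1)}>\eta_1/2$ to keep the new corrections within the admissible range, the surplus is carried through to $s_3^{(2)}$ and $s_4^{(2)}$ (which are assembled additively from the same lower-depth annuli in that proof), yielding a normalized separation symbol for $\varphi_2$ of the form $(s_1,s_2,s_3+\eta',s_4+\eta')$ with $\eta'=\eta'(\eta_1,\delta)>0$. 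Lemma 4.3 then gives $\varphi_2$ norm $\beta+\eta'/2$, and since Lemmas 4.4 and 4.5 never decrease the norm this persists through $\varphi_3$ to $\varphi_4$.

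\emph{Roughness alternative on a side.} Say $\partial U_c^1$ is $(M,\eta_2)$-rough. As in the proof of the previous lemma, $\varphi_1|U_c^1$ extends to a degree-$2$ branched cover of a disk $W_c\supseteq U_c^1$ with $\m(W_c\setminus U_c^1)\ge\delta/2$, so $\partial U_c^1$ lies well inside the domain of every composite branch of the form $h\circ(z-c)^2$ (obtained by postcomposing $h_1\circ(z-c)^2$ with univalent non-central branches) occurring in the later pullbacks. Repeated application of Fact 4.3 transports the roughness downward along the orbit of the corresponding return domains, showing that the relevant boundaries at depths $2$ and $3$ are $(M',\eta_2')$-rough with $M',\eta_2'$ depending only on $M,\eta_2,\delta$; one also has moduli at least $M'$ on both sides of the pulled-back curve once the required number of steps has elapsed, by the separation bounds $\mu(\varphi_i)\ge\delta$, $\nu(\varphi_i)\ge\delta/2$ (Proposition 4 and Lemma 4.6). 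Building the $A_2$-type separating annulus of $\varphi_4$ as a pullback of a sum $A\oplus A'$ straddled by that rough curve then forces $\m(A\oplus A')\ge\m A+\m A'+\eta_2'$ directly from Definition 4.1; the distribution argument of the passage from Lemma 4.8 to Lemma 4.10 spreads this surplus over all four entries of the separation symbol of $\varphi_4$, and Lemma 4.3 lifts its norm to $\beta+\eta''$ with $\eta''=\eta''(\eta_2,\delta)>0$.

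Setting $\eta:=\min\{\eta'/2,\eta''\}$, evaluated for whichever alternative is realized on the $c$-side and on the $d$-side, yields the claim. The main obstacle I anticipate lies in the roughness alternative: one must check that the moduli accumulated on the two sides of the pulled-back curve genuinely reach the threshold $M'$ dictated by Facts 4.2 and 4.3 within the three available steps, and that the strict super-additivity is harvested at a place in the separating-annuli construction from which it can still be routed into $s_3$ and $s_4$ (rather than being absorbed by $s_1$ and $s_2$); both require redoing, with the roughness term carried along throughout, the bookkeeping already performed in the proofs of Lemmas 4.4, 4.5 and 4.8.
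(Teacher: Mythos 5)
Your proposal misreads the role of the hypothesis $s_4^{(1)}-s_3^{(1)}>\eta_1/2$ and, as a consequence, the key step fails. You treat this inequality as a technical side condition (``to keep the new corrections within the admissible range'') and try to extract the norm gain from the dichotomy of Lemma 4.11. But the dichotomy's modulus alternative gives a surplus of $\eta_1$ \emph{over $s_3^{(1)}$}, whereas what feeds into the next symbol is $\m(U_c^1\setminus U_c^2)=\tfrac12\m(U_c^0\setminus P_c^1)$ measured against $s_1^{(2)}=\tfrac12 s_4^{(1)}$; one already knows $\m(U_c^0\setminus P_c^1)\ge \omega_3+\omega_4+\omega_5\ge s_4^{(1)}$ from the separation symbol itself, and since $s_4^{(1)}-s_3^{(1)}=\lambda_1+\lambda_2$ can be as large as $\beta$, the bound $s_3^{(1)}+\eta_1$ may be strictly weaker than $s_4^{(1)}$. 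So in the regime of this lemma the modulus alternative can be entirely vacuous and carries no surplus to $s_3^{(2)},s_4^{(2)}$. The roughness alternative is likewise not exploitable here without further hypotheses: harvesting $(M,\eta_2)$-roughness of $\partial U_c^1$ requires an annulus of modulus at least $M$ \emph{inside} $U_c^1$ on the outer side of the curve, which in the paper's scheme is $A_2^{(1)}$ and needs $s_2^{(1)}\ge\delta$ --- exactly the extra hypothesis that appears in Lemma 4.13 and is absent here (indeed $s_2^{(1)}=\alpha-\lambda_2$ can vanish). This is precisely why the paper splits the Fibonacci type $\mathcal B$ analysis into two complementary cases: Lemma 4.13 handles $s_4^{(1)}-s_3^{(1)}\le\eta_1/2$ (and $s_2^{(1)}\ge\delta$) by your route, while the present lemma handles the opposite case by a different mechanism.

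The paper's actual argument never invokes Lemma 4.11. The hypothesis $s_4^{(1)}-s_3^{(1)}>\eta_1/2$ means $\lambda_1+\lambda_2>\eta_1/2$, so after one Fibonacci inducing step (Lemma 4.4) the symbol of $\varphi_2$ satisfies $s_1^{(2)}-s_2^{(2)}=(\lambda_1+\lambda_2)/2>\eta_1/4$: the asymmetry itself is the resource. This definite gap between $\sigma_1=s_1^{(2)}+s_1^{(3)}$ and $\sigma_2=s_2^{(2)}+s_1^{(3)}$ is then converted into a strict modulus gain $\tilde\epsilon(\delta,\eta_1)$ by part (2) of Lemma 4.1 (strict super-additivity under a proper, non-covering degree-$2$ map), applied to $\varphi_2|U_c^2$ with $U_1=A_3^{(3)}\oplus A_5^{(3)}$, giving $\m(A_3^{(3)}\oplus A_4^{(3)}\oplus A_5^{(3)})\ge s_4^{(3)}+\tilde\epsilon$; the gain then propagates through one more step and Lemma 4.3 lifts the norm of $\varphi_4$ to $\beta+\tilde\epsilon/4$. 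If you want to repair your write-up, replace both alternatives by this single application of Lemma 4.1(2) driven by the correction asymmetry.
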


\begin{proof}
Without loss of generality, we may assume that $\varphi_2$ and $\varphi_3$ are of Fibonacci type. By Lemma 2.4 and 2.5, $\varphi_2$ is of type $\mathcal C$ and $\varphi_3$ is of type $\mathcal A$. Looking into the proof of Lemma 4.4, the assumption $s_4^{(1)} - s_3^{(1)} > \eta_1/2$ implies $\varphi_2$ has a normalized separation symbol with norm $\beta$ and such that
\[ s_1^{(2)} -s_2^{(2)} = \lambda_1'+\lambda_2'=\frac{\lambda_1+\lambda_2}{2} > \frac{\eta_1}{4}.
\]

Since $\varphi_2$ is of type $\mathcal C$, $\varphi_2 : U_0^2 \to V_0^1$ is a degree 2 branched covering while $\varphi_2 : U_1^2 \to U_0^1$ is univalent. Since $\varphi_2$ is of Fibonacci type, $\varphi_3|U_0^3 = \varphi_2 \circ \varphi_2$ while $\varphi_3|U_1^3 = \varphi_2$. Therefore, $\varphi_2: U_0^3 \to V_P^2$ is a degree 2 branched covering while $\varphi_2 :U_1^3 \to V_0^2$ is univalent. See from the proof of Lemma 4.4, $A_3^{(3)}$ and $A_4^{(3)}$ are the preimages of $B_2^{(2)}$ and $B_1^{(2)}$ under $\varphi_2$. Then $\m A_3^{(3)} = \m B_2^{(2)}$ and $\m A_4^{(3)} \geq 1/2 \cdot \m B_1^{(2)}$.

We will apply Lemma 4.1 in the following context. Make $f$ equal to $\varphi_2|U_0^2$, $U_1$ equal to $A_3^{(3)} \oplus A_5^{(3)}$ and $D_1'=U_0^2$. Then make $U_2 = B_2^{(2)} \oplus (V_0^2 \setminus V_0^3)$ and $W_2 = B_1^{(2)}$. So we can apply to $\sigma_1 = s_1^{(2)} + s_1^{(3)}$ and $\sigma_2 = \m B_2^{(2)} + \m (V_0^2 \setminus V_0^3) = s_2^{(2)} + s_1^{(3)}$. Since $\m U_2 \geq \m (V_0^2 \setminus V_0^3) \geq \delta$ and $\sigma_1 - \sigma_2 \geq \eta_1/4$, there exists $\tilde \epsilon>0$ depending only on $\delta$ and $\eta_1$ such that
\[ \m( A_3^{(3)} \oplus A_4^{(3)} \oplus A_5^{(3)}  ) \geq \frac{s_1^{(2)} + s_2^{(2)}}{2} + s_1^{(3)} + \tilde \epsilon.
\]
Taking into account that $s_1^{(2)} = \alpha+\lambda_1'$, $s_2^{(2)} = \alpha-\lambda_2'$ and $s_1^{(3)} = \alpha + \lambda_2'/2$ as elements of a normalized separation symbol, we have
\[ \m( A_3^{(3)} \oplus A_4^{(3)} \oplus A_5^{(3)}  ) \geq \frac{\alpha+\lambda_1' + \alpha - \lambda_2'}{2} + \alpha + \frac{\lambda_2'}{2} + \epsilon' \geq s_4^{(3)} + \tilde \epsilon.
\]
Using the same argument we have 
\[ \min\{\m( A_3^{(3)} \oplus A_4^{(3)} \oplus A_5^{(3)}), \m( B_3^{(3)} \oplus B_4^{(3)} \oplus B_5^{(3)}  )  \} \geq s_4^{(3)} + \tilde \epsilon.
\]
The growth on $s_4^{(3)}$ increases $s_1^{(4)}$ with rate at least $\tilde \epsilon/2$, hence $s_3^{(4)}$ and $s_4^{(4)}$ can be increased by $\tilde \epsilon/2$. Therefore we build a normalized separation symbol $\tilde \Sigma = (s_1^{(4)} + \tilde \epsilon/2, s_2^{(4)}, s_3^{(4)} + \tilde \epsilon/2, s_4^{(4)} + \tilde \epsilon/2)$ for $\varphi_4$. By Lemma 4.3, the norm can be lifted to $\beta + \tilde \epsilon/4$. Set $\eta : = \tilde \epsilon/4$. This finishes the proof.

\end{proof}

\begin{lem}
Suppose that $\varphi_1$ is induced from $\varphi_{-}$ such that both $\varphi_{-}$ and $\varphi_1$ have a normalized separation symbol with norm $\beta$ and separation bounds $\delta$. Suppose that $\varphi_1$ is of Fibonacci type and of type $\mathcal B$. Assume that $\varphi_1|U_0^1$ and $\varphi_1|V_0^1$ has the form $h_1 \circ (z-c)^2$ and $h_2 \circ (z-d)^2$.

Set $M :=\delta$. Let $\eta_1$ be given in Lemma 4.11. Assume that the separation symbol of $\varphi_1$ satisfies $s_2^{(1)} \geq \delta$ and $s_4^{(1)} - s_3^{(1)} \leq \eta_1/2$. Then there exists $\eta>0$ depending only on $\delta$ such that $\varphi_3$ has a normalized separation symbol with norm $\beta+\eta$.
\end{lem}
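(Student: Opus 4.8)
The plan is to feed Lemma 4.11 into the inducing machinery of Lemma 4.4, using the hypothesis $s_4^{(1)}-s_3^{(1)}\le\eta_1/2$ to convert \emph{either} branch of Lemma 4.11's dichotomy into a definite, $\delta$-controlled surplus in the norm.

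First I would reduce to the Fibonacci case, exactly as in Lemma 4.12. If $\varphi_2$ is not of Fibonacci type, its combinatorial type $(r,t)$ has $t\ge 2$ or $(t=1,\ r\ge 3)$, and Corollary 4.1, resp.\ Lemma 4.10, applied with $\varphi_2$ in place of $\varphi_1$ already gives $\varphi_3$ a normalized separation symbol with norm $\beta+\delta/8$; so we may assume $\varphi_2$ is Fibonacci, and then Lemmas 2.4 and 2.5 force $\varphi_2$ of type $\mathcal C$ and $\varphi_3$ of type $\mathcal A$ (subtypes fixed without loss of generality). Set $M:=\delta$ and let $\eta_1,\eta_2>0$ be the constants of Lemma 4.11; they depend only on $M$ and $\delta$, hence only on $\delta$. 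Apply Lemma 4.11 to the two quadratic branches $\varphi_1|U_c^1$ and $\varphi_1|V_d^1$.

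Next I would analyse, side by side, the outcome on the $c$-side (the $d$-side being symmetric). \textbf{Case (i):} $\m(U_c^0\setminus P_c^1)\ge s_3^{(1)}+\eta_1$. Since $s_3^{(1)}\ge s_4^{(1)}-\eta_1/2$, this reads $\m(U_c^0\setminus P_c^1)\ge s_4^{(1)}+\eta_1/2$, i.e.\ the annulus $U_c^0\setminus\overline{P_c^1}$ overshoots the norm by a definite amount. As $\varphi_1|U_c^1$ is a degree $2$ branched covering of $U_c^0$ with $(\varphi_1|U_c^1)^{-1}(P_c^1)=U_c^2$ and $\varphi_1(c)\in P_c^1$, pulling back gives $\m(U_c^1\setminus U_c^2)\ge\tfrac12 s_4^{(1)}+\tfrac{\eta_1}{4}$; and since for $\varphi_2$ of type $\mathcal C$ (Fibonacci) the annulus $A_5^{(2)}=U_1^2\setminus\overline{P_c^2}$ is conformally $U_c^1\setminus U_c^2$, the quantity $\omega_5^{(2)}$ exceeds the value supplied by the Lemma 4.4 construction by at least $\eta_1/4$. \textbf{Case (ii):} $\partial U_c^1$ is $(\delta,\eta_2)$-rough. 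Here the hypothesis $s_2^{(1)}\ge\delta$ supplies the annulus $A_2^{(1)}$, which surrounds $U_c^1$ and has $\m A_2^{(1)}\ge\omega_2^{(1)}\ge s_2^{(1)}\ge M$, while the separation bounds of Proposition 4 give $\m(U_c^1\setminus\overline{U_c^2})\ge\delta$. Transporting the roughness of $\partial U_c^1$ through the univalent branch $\varphi_1|U_1^1$ and the quadratic branch $\varphi_1|U_c^1=h_1\circ(z-c)^2$ via Fact 4.3, one obtains roughness of the boundary of $\varphi_2$'s central domain; fed into Fact 4.1 together with these two modulus bounds, it makes the super-additive estimates of the Lemma 4.4 construction for $\varphi_2$, and then for $\varphi_3$, strictly super-additive with a gap $\eta_3=\eta_3(\delta)>0$.

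Finally, whichever of (i), (ii) occurs on each of the two sides, I would propagate the resulting surplus through one more step of the Lemma 4.4 construction (from $\varphi_2$ of type $\mathcal C$ with $t=1$ to $\varphi_3$ of type $\mathcal A$): the surplus, present on both sides, survives the minima defining the $\omega_i^{(3)}$ and is distributed into $s_3^{(3)}$ and $s_4^{(3)}$ of a normalized separation symbol for $\varphi_3$; Lemma 4.3 then lifts its norm to $\beta+\eta$ with $\eta:=\tfrac18\min\{\eta_1,\eta_3\}>0$, a constant depending only on $\delta$. I expect the main obstacle to be Case (ii): turning the qualitative statement ``$\partial U_c^1$ is $(\delta,\eta_2)$-rough'' into a quantitative modulus gain requires identifying the correct nested pair of annuli straddling $\partial U_c^1$ — and, downstream, the central curve of $\varphi_2$ — checking that their moduli are $\ge M$ from $s_2^{(1)}\ge\delta$ and the separation bounds, invoking Fact 4.3 to carry roughness across the quadratic branches, and keeping every constant that appears dependent on $\delta$ alone, not on $\beta$ or on the depth $n$.
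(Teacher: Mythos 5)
Your reduction to the case where $\varphi_2$ is Fibonacci of type $\mathcal C$, your treatment of Case (i), and your final propagation through one more step of the Lemma 4.4 construction followed by Lemma 4.3 all coincide with the paper's argument. The gap is in Case (ii), which you yourself identify as the crux: the mechanism you describe there — transporting the roughness of $\partial U_c^1$ \emph{forward} via Fact 4.3 to the boundary of $\varphi_2$'s central domain and then feeding it into Fact 4.1 — does not work. Fact 4.3 carries roughness in the opposite direction (from the image curve $\psi(w)$ back to $w$), so it cannot push roughness of $\partial U_c^1$ forward onto $\partial U_c^2$; and Fact 4.1 is the tool used \emph{inside Lemma 4.11} to establish the dichotomy (its conclusion is ``rough or Hausdorff-close,'' not a modulus surplus), so it cannot be ``fed'' roughness to output super-additivity. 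As written, Case (ii) terminates in a step that has no valid justification.

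What is actually needed — and what the paper does — is to exploit the roughness of $\partial U_c^1$ \emph{in place}, directly through Definition 4.2: you have already identified the correct straddling pair, namely $A_2^{(1)}$ (surrounding $U_c^1$, with $\m A_2^{(1)} \geq s_2^{(1)} \geq \delta = M$ by hypothesis) and $U_c^1 \setminus U_c^2$ (inside $\partial U_c^1$, with modulus $\geq \delta = M$ by the separation bounds). The definition of $(M,\eta_2)$-roughness then gives $\m\bigl(A_2^{(1)} \oplus (U_c^1 \setminus U_c^2)\bigr) > \m A_2^{(1)} + \m (U_c^1 \setminus U_c^2) + \eta_2$ with no further facts invoked. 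Since $\varphi_1 : A_3^{(2)} \oplus A_5^{(2)} \to A_2^{(1)} \oplus (U_c^1 \setminus U_c^2)$ is univalent, this defect pulls back to give $\m(A_3^{(2)} \oplus A_5^{(2)}) \geq s_2^{(1)} + s_1^{(2)} + \eta_2 \geq s_3^{(2)} + \eta_2$, hence $\m(A_3^{(2)} \oplus A_4^{(2)} \oplus A_5^{(2)}) \geq s_4^{(2)} + \eta_2$, which is the surplus (with $\epsilon' = \min\{\eta_1/4, \eta_2\}$ over the two cases and both sides) that your final propagation step then converts into a norm $\beta + \epsilon'/4$ for $\varphi_3$. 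With Case (ii) repaired in this way your outline becomes the paper's proof.
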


\begin{proof}
Without loss of generality, we may assumt that $\varphi_2$ is also of Fibonacci type and of type $\mathcal C$. Suppose that $\varphi_i$ has a normalized seperation symbol $\Sigma^{(i)} = (s_1^{(i)},s_2^{(i)},s_3^{(i)},s_4^{(i)})$ with norm $\beta$ and corrections $\lambda_1^{(i)}$ and $\lambda_2^{(i)}$, for $i=1,2,3$.

Now for $M := \delta$, let $\eta_1$ and $\eta_2$ be given in Lemma 4.11. We discuss in two cases:
\begin{itemize}
\item[(i)] $\m (U_0^0 \setminus U_P^1) \geq s_3^{(1)} + \eta_1 $;
\item[(ii)] $\partial U_0^1$ is $(M,\eta_2)$-rough.
\end{itemize}

\noindent
{\it Case i.} In this case $\m (U_0^0 \setminus U_P^1) \geq s_4^{(1)} + \eta_1/2 $. Since $\varphi_1 : U_0^1 \setminus U_0^2 \to U_0^0 \setminus U_P^1$ is a degree 2 branched covering, then 
\[\m (U_0^1 \setminus U_0^2) = \frac{1}{2} \m (U_0^0 \setminus U_c^1) \geq \frac{1}{2} s_4^{(1)} + \frac{\eta_1}{4} = s_1^{(2)} + \frac{\eta_1}{4}.
\]
Since $\varphi_1 : A_5^{(2)} \to U_0^1 \setminus U_0^2$ is univalent, $\m A_5^{(2)} \geq s_1^{(2)} + \eta_1/4$. Looking into the proof of Lemma 4.4, we estimate $\m A_5^{(2)}$ as $s_1^{(2)}$. This implies that $\m(U_0^1\setminus U_P^2) = \m (A_3^{(2)}\oplus A_4^{(2)}\oplus A_5^{(2)}) \geq s_4^{(2)} + \eta_1/4$.

\noindent
{\it Case ii.} Since $\partial U_0^1$ is $(M,\eta_2)$-rough, then 
\[ \m(A_2^{(1)} \oplus (U_0^1\setminus U_0^2)) > \m(A_2^{(1)}) + \m (U_0^1\setminus U_0^2)) + \eta_2.
\]
Since $\varphi_1 : A_3^{(2)} \oplus A_5^{(2)} \to A_2^{(1)} \oplus (U_0^1\setminus U_0^2)$ is univalent, we have 
\[ \m (A_3^{(2)} \oplus A_5^{(2)}) > \m(A_2^{(1)}) + \m (U_0^1\setminus U_0^2)) + \eta_2 \geq s_2^{(1)} + s_1^{(2)} + \eta_2 \geq s_3^{(2)} + \eta_2.
\]
Hence $\m(U_0^1\setminus U_P^2) = \m (A_3^{(2)}\oplus A_4^{(2)}\oplus A_5^{(2)}) \geq s_4^{(2)} + \eta_2$.

Use the same argument we can conclude that in either case, 
\[ \min\{\m( A_3^{(2)} \oplus A_4^{(2)} \oplus A_5^{(2)}), \m( B_3^{(2)} \oplus B_4^{(2)} \oplus B_5^{(2)}  )  \} \geq s_4^{(2)} + \epsilon',
\]
where $\epsilon' = \min\{\eta_1/4,\eta_2\}$ depending only on $\delta$. The growth on $s_4^{(2)}$ increases $s_1^{(3)}$ with rate at least $\epsilon'/2$, hence $s_3^{(3)}$ and $s_4^{(3)}$ can be increased by $\epsilon'/2$. By Lemma 4.3, the norm can be lifted to $\beta + \epsilon'/4$. Set $\eta : =\epsilon'/4$. This finishes the proof.
\end{proof}

After all these preparations, we are going to prove Proposition 3. Following \cite{S} (see also \cite{H}[Lemma 6.3]) and by Proposition 4, we can pick $n_f >1$ such that $\phi_n|U^n$ (and $\phi_n|V^n$) has the form $h_n' \circ (z-c)^2$ after conformally conjugate for all $n \geq n_f$. In particular, $h_n'$ has uniformly bounded distortion.

\begin{proof}[Proof of Proposition 3]

Fix $n \geq n_f$, let $\varphi_1$ denote the $n$-th generalized renormalization $\phi_n$. We will consider 7 consecutive steps of inducing $\varphi_i$, $1 \leq i \leq 7$. Assume that $\varphi_i$ has combinatorial type $(p_i, q_i)$. By Lemma 4.4 - 4.6 and Proposition 4, there exists $\beta >0$ depending only on $\mu_1 \geq \tau$ such that $\varphi_i$ has a normalized separation symbol with norm $\beta$ for all $1 \leq i \leq 7$. By Lemma 4.7, each $\varphi_i$ has separation bound $\delta = \beta/4$.

\noindent
{\it Case i.} If there exists $ 1 \leq j \leq 4$ such that $q_j \geq 2$, then corollary 4.8 implies $\varphi_{j+1}$ has a normalized separation symbol with norm $\beta + \delta/8$, so as $\varphi_7$.

\noindent
{\it Case ii.} If there exists $ 1 \leq j \leq 4$ such that $p_j \geq 3$ and $q_j=1$, then Lemma 4.10 implies $\varphi_{j+1}$ has a normalized separation symbol with norm $\beta + \delta/8$, so as $\varphi_7$.

\noindent
{\it Case iii.} If for all  $1 \leq j \leq 4$, $\varphi_j$ is of Fibonacci type. Then there exists $2 \leq k \leq 4$ such that $\varphi_k$ is of type $\mathcal B$. Since $\varphi_{k-1}$ is of Fibonacci type, by Lemma 4.5, $s_2^{(k)} \geq \beta/4$. Set $M := \beta/4$. By Lemma 4.11, 4.12 and 4.13, there exists $\eta>0$ depending only on $\mu_1 \geq \tau$ such that at least $\varphi_{k+3}$ has a normalized separation symbol with norm $\beta + \eta$, so as $\varphi_7$.

As shown above, in any case $\varphi_7$ will have a normalized separation symbol with norm $\beta+\eta$ where $\eta>0$ depending only on $\mu_1 \geq \tau$. Hence the separation norm for $\phi_n$ grows at least as $C' \cdot n$ where $C'$ depending only on $\tau$ and $f$. Then $\mu_n \geq C'/4 \cdot n$ by Proposition 4, which proves Proposition 3.

\end{proof}

\subsection{Absence of Cantor attractor} In this subsection, we will prove Proposition 1 by a random walk argument, using $\alpha_k$ to indicate the state after the $k$-th step in the walk. The Lebesgue measure will be denoted as $m(\cdot)$.

The Schwarzian derivative of a $C^3$ function $\phi: T \to \mathbb R$ is defined as 
\[ S \phi : = \frac{D^3\phi}{D\phi} - \frac{3}{2}(\frac{D^2\phi}{D\phi})^2.
\]
If $f$ is a real polynomial with only real critical points, then $S f <0$ whenever $Df \neq 0$, see for example \cite{MS}[Chapter IV, exercise 1.7]. We shall use the following version of Koebe principle, which was proved in \cite{MS}.

\begin{prop} Assume that $h : T \to h(T)$ is a $C^3$ diffeomorphism with $Sh<0$. If $J$ is a subinterval of $T$ such that $h(J)$ is $\kappa$-well inside $h(T)$, that is $(1 + 2 \kappa)h(J) \subset h(T)$, then $J$ is $\kappa'$-well inside $T$, where $\kappa'=\kappa^2/(1+2\kappa)$.
\end{prop}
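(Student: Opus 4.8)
The plan is to deduce the statement from the \emph{cross-ratio inequality} for maps with negative Schwarzian derivative. For intervals $N=(b,c)\subset M=(a,d)$ with $a<b<c<d$, set
\[
\mathbf{C}(M,N):=\frac{|N|\,|M|}{|(a,c)|\,|(b,d)|}\in(0,1),
\]
and recall the standard fact (see \cite{MS}) that a $C^{3}$ diffeomorphism $h$ with $Sh<0$ expands this quantity, i.e. $\mathbf{C}(h(M),h(N))\ge\mathbf{C}(M,N)$ for every such pair of subintervals of its domain. (Here $Sh$ is well defined on all of $T$ since $h$ is a diffeomorphism.) After composing with $x\mapsto-x$ if necessary, we may assume $h$ is orientation-preserving.

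First I would pass to an auxiliary interval. Since $(1+2\kappa)h(J)\subset h(T)$ and $h:T\to h(T)$ is a diffeomorphism, the interval $T^{*}:=h^{-1}\big((1+2\kappa)h(J)\big)$ satisfies $J\subset T^{*}\subset T$, and by the meaning of $(1+2\kappa)h(J)$ the two components of $h(T^{*})\setminus h(J)$ each have length exactly $\kappa\,|h(J)|$. Hence
\[
\mathbf{C}\big(h(T^{*}),h(J)\big)=\frac{|h(J)|\cdot(1+2\kappa)|h(J)|}{\big((1+\kappa)|h(J)|\big)^{2}}=\frac{1+2\kappa}{(1+\kappa)^{2}}.
\]
Applying the cross-ratio inequality to $J\subset T^{*}$ and writing $|L^{*}|=a|J|$, $|R^{*}|=b|J|$ for the lengths of the two components of $T^{*}\setminus J$, this becomes
\[
\frac{1+a+b}{(1+a)(1+b)}\ \le\ \frac{1+2\kappa}{(1+\kappa)^{2}}.
\]
Since the left-hand side equals $\tfrac1{1+b}+\tfrac{b}{(1+a)(1+b)}>\tfrac1{1+b}$, we get $\tfrac1{1+b}<\tfrac{1+2\kappa}{(1+\kappa)^{2}}$, which rearranges to $b>\tfrac{(1+\kappa)^{2}-(1+2\kappa)}{1+2\kappa}=\tfrac{\kappa^{2}}{1+2\kappa}=\kappa'$, and by the symmetric estimate $a>\kappa'$ as well. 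Finally, since $T^{*}\subset T$ each component of $T^{*}\setminus J$ is contained in the corresponding component of $T\setminus J$, so both gaps of $J$ in $T$ have length at least $\kappa'|J|$; that is, $J$ is $\kappa'$-well inside $T$.

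The only genuine input is the cross-ratio inequality, and the one point that requires care is the passage to $T^{*}$: applying the inequality directly to $J\subset T$ is useless, because $\mathbf{C}(h(T),h(J))$ can be arbitrarily small when the outer gaps of $h(J)$ in $h(T)$ are large. Restricting to $T^{*}$, where those gaps are pinned to $\kappa\,|h(J)|$, is precisely what produces the definite bound $\tfrac{1+2\kappa}{(1+\kappa)^{2}}$; everything else is the elementary monotonicity computation above, and the normalization of $\mathbf{C}$ has been set up exactly so that the resulting constant comes out as $\kappa^{2}/(1+2\kappa)$.
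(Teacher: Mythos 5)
Your proof is correct. The paper does not prove this proposition at all --- it is the macroscopic Koebe principle, quoted with a citation to \cite{MS} --- and your argument (restrict to $T^{*}=h^{-1}\bigl((1+2\kappa)h(J)\bigr)$, apply the expansion of the cross-ratio $\mathbf{C}=B$ under maps with negative Schwarzian, and solve the resulting inequality for the gap ratios) is exactly the standard proof from that reference, yielding precisely the constant $\kappa'=\kappa^{2}/(1+2\kappa)$. The only cosmetic point is that the inequality $\tfrac{1+a+b}{(1+a)(1+b)}\ge\tfrac{1}{1+b}$ should be stated non-strictly to cover $b=0$ (which is then excluded anyway, since it would force the left-hand side to exceed $\tfrac{1+2\kappa}{(1+\kappa)^{2}}<1$).
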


\begin{proof}[Proof of Proposition 1]
We start by defining an inducing scheme. To be precise, we may assume that $g_n|I^n \cup C^n \cup J^n \cup D^n$ is of type $\mathcal C$. Since $g_n$ is non-central return, each $I^{n-1}$ and $J^{n-1}$ contains two immediate branches which are symmetric w.r.t. the critical points. Specifying them by $L_n < c < \hat L_n$ and $R_{n} < d < \hat R_{n}$.

Let $W_{n+1} \supset I^{n+1}$ be such that $g_n(\partial W_{n+1}) \subset \partial D^n$, then $W_{n+1} \subset I^n$. Take $J^{n+1} \subset V_{n+1} \subset J^{n}$ analogously. Then $W_{n+1}$ and $V_{n+1}$ are disjoint with the immediate branches. As in the proof of Lemma 2.1 and Lemma 2.2, we can pullback $J^n$ and $D^n$ under $g_n|D^n$ for $r_n-1$ times. Denote $H_0 : = D^n$. For $1 \leq i \leq r_n-1$, denote $K_i: = (g_n|D^n)^{-i}(J^n)$, $H_i: = (g_n|D^n)^{-i}(D^n)$. Then $g_n(c) \in K_{r_n-1}$. Note that it may happen that $g_n(W_{n+1}) \supset H_{r_n-1}$. If $x \in H^i \setminus H^{i+1}$, $0 \leq i \leq r_n-2$, then $g_n^i(x) \in D^n$ but $g_n^{i+1}(x) \notin D^n$. In particular, $g_n^{r_n-1}(x) \in D^n$ iff $x \in H_{r_n-1}$.

Now let us define the induced map $G$ inductively on each level $(I^n \setminus I^{n+1} )\cup (J^n \setminus J^{n+1})$. First, on $(I^0 \setminus I^1) \cup (J^0 \setminus J^1)$, $G$ is just defined as the first return map to $I^0 \cup J^0$. For $n \geq 1$, if $x \in (W_{n+1} \setminus I^{n+1}) \cup (V_{n+1} \setminus J^{n+1})$, define $E(x) := \max\{ i \leq r_n, g_n^i (x) \in C^n \cup D^n\}$. Then define $G$ on $(I^n \setminus I^{n+1} )\cup (J^n \setminus J^{n+1})$ as:
\begin{equation*}\label{eqn:G}
G(x) = \begin{cases}
g_{n}(x) & \text{ if $x \in L_n \cup \hat L_n \cup R_n \cup \hat R_n$ ; }
\\
g_n^{E(x) +1}(x) & \text{ if $x \in (W_{n+1} \setminus I^{n+1}) \cup (V_{n+1} \setminus J^{n+1})$ s.t. $g_n^{E(x) +1}(x) \in I^n \cup J^n$;} 
\\
g_n^{E(x) +2}(x) & \text{ if $x \in (W_{n+1} \setminus I^{n+1}) \cup (V_{n+1} \setminus J^{n+1})$ s.t. $g_n^{E(x) +1}(x) \notin I^n \cup J^n$;} 
\\
g_{n}^2(x) & \text{ otherwise. }
\end{cases}
\end{equation*}
In this manner, $G$ is defined for m-a.e. on level $n$. If $x$ belongs to the immediate branches, then there exists a neighborhood $U_x \ni x$ such that $G$ maps $U_x$ onto $I^n$ or $J^n$ monotonically. If $x$ satisfies the second case, then $U_x$ is the pullback of some $K_{E(x)-1}$ and hence $G$ maps $U_x$ onto $I^n$ or $J^n$. If $x$ satisfies the third case, then $g_n$ maps $U_x$ into $H_{E(x)-2} \setminus (H_{E(x)-1} \cup K_{E(x)-1})$, hence $G$ maps $U_x$ onto $I^{n-1}$ or $J^{n-1}$. For $x$ otherwise, $G$ maps $U_x$ onto $I^{n-1}$ or $J^{n-1}$. Moreover, any two neighborhoods $U_x$ and $U_y$ are either disjoint or coincide.

Repeating this construction for all $n$, we obtain an induced Markov map $G$ which preserves the partition given by the boundary points of the intervals $I^n$ and $J^n$.

By theorem 1, given any $\rho>0$ large enough, there exists $N' >0$ such that the scaling factor $\lambda_n \leq 1/(1+ 2\rho)$ for all $n \geq N'$. By non-flatness and Proposition 5, there exists $\rho'=\mathcal O(\sqrt \rho)$ such that $(1+2\rho') J \subset I^{n-1}$ or $J^{n-1}$ for any return domains $J$ of $g_n$. In particular, $\rho' \to \infty$ and $ \rho \to \infty$. Similarly, there exists $\tilde \rho=\mathcal O(\sqrt \rho')$ such that $(1+2\tilde \rho)W_{n+1} \subset I^n$ and $(1+2\tilde \rho)V_{n+1} \subset J^n$.

To describe the random walk, let $\alpha_k = n$ if $G^k(x) \in (I^n \setminus I^{n+1} )\cup (J^n \setminus J^{n+1})$. The $\alpha_k$ can be considered as random variable with the following conditional probabilities. For $n > N'$ we claim that
\begin{align*}
P(\alpha_{k+1}=n-1|\alpha_k=n)&:= \frac{m(\alpha_k=n \mbox{ and } \alpha_{k+1}=n-1)}{m(\alpha_k=n)}\\
& \geq 1 - \mathcal O(1/\tilde \rho).
\end{align*}
Indeed, if $\alpha_k(x)=n$, then there is a neighborhood $W_x \ni x$ such that $G^k$ maps $W_x$ monotonically onto $I^n$ or $J^n$. Without loss of generality we may assume that $G^k(W_x) = I^n$. Given $y \in W_x$, there are two cases to consider.
\begin{itemize}
\item $G^k(y) \in L_n \cup \hat L_n \cup \bigcup_i J_i$ where $J_i \subset W_{n+1}$ such that $G(J_i) = I^n$ or $J^n$. Then by Proposition 5,
\[ m(\{ y \in W_x | G^k(y) \in  L_n \cup \hat L_n \cup \bigcup_i J_i\}) = \mathcal O(1/\tilde \rho) m(W_x).
\]
\item $G^k(y) \in U$ where $U$ is a domain of $G$ such that $G(U) = I^{n-1}$ or $J^{n-1}$. We may assume that $G(U) = I^{n-1}$. Then by Proposition 5,
\[ m(\{ y \in W_x|G^k(y) \in U, G^{k+1}(y) \in I^n  \}) = \mathcal O(1/\rho) m(W_x \cap G^{-k}(U)).
\]
\end{itemize}
Combining these estimates and adding over all domains $W_x$, we arrive at the claim. A similar argument gives for $r \geq 1$,
\begin{align*}
P(\alpha_{k+1}= n +r| \alpha_k=n)&= \frac{m(\alpha_k=n \mbox{ and } \alpha_{k+1}=n+r)}{m(\alpha_k=n)}\\
& \leq O(\rho^{-r}).
\end{align*}
Therefore, the drift of the random walk is 
\[\mathbb E(\alpha_{k+1}-n|\alpha_k=n) = \sum_{r \geq -1} r P(\alpha_{k+1}= n +r| \alpha_k=n) \leq -\frac{1}{2},
\]
for $\rho$ and hence $\tilde \rho$ sufficiently large and $n > N'$. A similar computation shows that the variance is bounded as well:
\begin{align*}
Var(\alpha_{k+1}-n|\alpha_k=n)& \leq \mathbb E((\alpha_{k+1}-n)^2|\alpha_k=n)\\
& =\sum_{r \geq -1} r^2 P(\alpha_{k+1}= n +r| \alpha_k=n)\\
&< 1+ \sum_{r \geq -1} r^2 O(\rho^{-r}) < \infty.
\end{align*}
Then we can apply the random walk argument from \cite{BKNS} to conclude that $\liminf_{k \to \infty} \alpha_k < \infty$ for m-a.e. $x$, and this excludes the existence of a Cantor attractor.
\end{proof}

\section*{Acknowledgements}

H. Ji is supported by NSFC Grant No. 12301103.


%

\section*{Conflict of interest}

The authors declare that they have no conflict of interest.



\end{document}